\newtheorem{theorem}{Theorem}[section]
\newtheorem{lemma}[theorem]{Lemma}
\newtheorem{proposition}[theorem]{Proposition}
\theoremstyle{remark}
\newtheorem{remark}[theorem]{Remark}
\newtheorem{definition}[theorem]{Definition}
\newtheorem{notations}[theorem]{Notations}
\newtheorem*{conventions}{Conventions}
\newcommand{\T}{\mathscr{F}}
\newcommand{\q}{{\bf q}}
\newcommand{\p}{\partial}
\newcommand{\f}{{\bf f}}
\newcommand{\g}{{\bf g}}
\newcommand{\e}{\mathbf{e}}
\newcommand{\D}{\mathscr{D}}
\newcommand{\bart}{\bar{t}}
\begin{document}

\title[Invariance of Quantum Rings II]
{Invariance of Quantum Rings under Ordinary Flops II: \\ A quantum Leray--Hirsch theorem}

\author[Y.-P.~Lee]{Yuan-Pin~Lee}
\email{yplee@math.utah.edu}
\address{Y.-P.~Lee: Department of Mathematics, University of Utah,
Salt Lake City, Utah 84112-0090, U.S.A.}

\author[H.-W.~Lin]{Hui-Wen~Lin}
\email{linhw@math.ntu.edu.tw}
\address{H.-W.~Lin: Department of Mathematics and Taida
Institute of Mathematical Sciences (TIMS),
National Taiwan University, Taipei 10617, Taiwan}

\author[C.-L.~Wang]{Chin-Lung~Wang}
\email{dragon@math.ntu.edu.tw}
\address{C.-L. Wang: Department of Mathematics,
Center for Advanced Studies in Theoretical Sciences (CASTS), and Taida
Institute of Mathematical Sciences (TIMS),
National Taiwan University, Taipei 10617, Taiwan}

\subjclass{14N35, 14E30}
\keywords{Quantum Leray--Hirsch, split ordinary flops, Dubrovin connections, Picard--Fuchs ideal, lifting of QDE, Birkhoff factorization, generalized mirror transform}

\begin{abstract}
This is the second of a sequence of papers proving the quantum invariance
for ordinary flops over an arbitrary smooth base.
In this paper, we complete the proof of the invariance of the big quantum
rings under \emph{ordinary flops of splitting type}.
 
To achieve that, several new ingredients are introduced.
One is a \emph{quantum Leray--Hirsch theorem}
for the local model (a certain toric bundle)
which extends the quantum $\D$ module of Dubrovin connection on the base by
a Picard--Fuchs system of the toric fibers.

Nonsplit flops as well as further applications of the quantum Leray--Hirsch
theorem will be discussed in subsequent papers. In particular, a \emph{quantum splitting principle} is developed in Part III \cite{LLQW} which reduces the general ordinary flops to the split case solved here.
\end{abstract}

\maketitle

\tableofcontents


\numberwithin{equation}{section}
\setcounter{section}{-1}

\section{Introduction}

\subsection{Overview}
This paper continues our study on quantum invariance of genus zero Gromov--Witten theory, up to analytic continuations along the K\"ahler moduli spaces, under ordinary flops over a non-trivial base. The quantum invariance via analytic continuations plays an important role in the study of various Calabi--Yau compactifications in string theory. It is also a potential tool in comparing various birational minimal models in higher dimensional algebraic geometry. We refer the readers to \cite{LLW} and Part I of this series \cite{LLWp1} for a general introduction.

In Part I, we had determined the \emph{defect of the cup product} 
under the canonical correspondence (I-\S1) and show that it is corrected 
by the small quantum product attached to the extremal ray (I-\S2). 
We then perform various reductions to the local models (I-\S3 and 4).
The most important consequence of this reduction is that we may
assume our ordinary flops are between two toric fibrations over the same
smooth base.

In this paper, we study the local models via various techniques and 
complete the proof of quantum invariance of Gromov--Witten theory
in genus zero under ordinary flops of splitting type.
This is, as far as we know, the first result on the quantum invariance under
the $K$-equivalence (crepant transformation) \cite{Wang2, Wang3} 
where the local structure of the exceptional loci can not be deformed to any 
explicit (e.g.~toric) geometry and the analytic continuation is nontrivial.
This is also the first result for which the analytic
continuation is established with nontrivial Birkhoff factorization.

Several new ingredients are introduced in the course of the proof.
One main technical ingredient is the \emph{quantum Leray--Hirsch theorem}
for the local model, which is related to the canonical lifting of the quantum
$\D$ module from the base to the total space of a (toric) bundle.
The techniques developed in this paper are applicable to more general cases and will be discussed in subsequent papers.

\begin{conventions}
This paper is strongly correlated with \cite{LLWp1}, which will
be referred to as ``Part I'' throughout the paper.
All conventions and notations there carry over to this paper (Part II).
\end{conventions}

\subsection{Outline of the contents}


\subsubsection{On the splitting assumption}

We recall the local geometry of an ordinary $P^r$ flop $f: X \dasharrow X'$ (Part~I \S 1.1). The local geometry of the $f$-exceptional loci $Z \subset X$ and $Z' \subset X'$ is encoded in a triple $(S, F, F')$, where $S$
is a smooth variety and $F$, $F'$ are two rank $r + 1$ vector
bundles over $S$. 
In Part I, we reduce the proof of the invariance of big quantum ring of
any ordinary flop to that of its local model.
Therefore, we may assume that
\[
 \begin{split}
 &X = \tilde{E} = P_{Z}(\mathscr{O}(-1) \otimes F' \oplus \mathscr{O}), \\
 &X' = \tilde{E}' = P_{Z'}(\mathscr{O}(-1) \otimes F \oplus \mathscr{O}),
 \end{split}
\]
where $Z \cong P_S(F)$ and $Z' \cong P_S(F')$ are projective bundles.
In particular, $X$ and $X'$ are toric bundles over the smooth base $S$. Moreover, it is equivalent to proving the \emph{type I quasi-linearity property}, namely the invariance for one pointed descendent fiber series of the form
$$
\langle \bart_1, \cdots, \bart_{n - 1}, \tau_k a \xi \rangle,
$$
where $\bar t_i \in H(S)$ and $\xi$ is the common infinity divisor of $X$ and $X'$.

To proceed, recall that the descendent GW invariants are encoded by their
generating function, i.e. the so called (big) $J$ function:
For $\tau \in H(X)$,
\begin{equation*}
  J^X(\tau, z^{-1}) := 1 + \frac{\tau}{z} + \sum_{\beta, n, \mu} \frac{q^{\beta}}{n!}
  T_{\mu} \left\langle \frac{T^{\mu}}{z(z-\psi)}, \tau, \cdots, \tau \right\rangle_{0,n+1, \beta}^X.
\end{equation*}
The determination of $J$ usually relies on the existence of $\Bbb
C^\times$ actions. Certain localization data $I_\beta$ coming from
the stable map moduli are of hypergeometric type. For ``good''
cases, say $c_1(X)$ is semipositive and $H(X)$ is generated by
$H^2$, $I(t) = \sum I_\beta\, q^\beta$ determines $J(\tau)$ on the
small parameter space $H^0 \oplus H^2$ through the ``classical''
\emph{mirror transform} $\tau = \tau(t)$. For a simple flop, $X =
X_{loc}$ is indeed semi-Fano toric and the classical Mirror Theorem
(of Lian--Liu--Yau and Givental) is sufficient \cite{LLW}. (It
turns out that $\tau = t$ and $I = J$ on $H^0 \oplus H^2$.)

For general base $S$ with given $QH(S)$, the determination of $QH(P)$ for
a projective bundle $P \to S$ is far more involved. To allow \emph{fiberwise localization} to determine the structure of GW
invariants of $X_{loc}$, the bundles $F$ and $F'$ are then assumed to be split bundles. 

In this paper (Part II), we only consider ordinary flops of 
\emph{splitting type}, 
namely $F \cong \bigoplus_{i = 0}^r L_i$ and $F' \cong \bigoplus_{i = 0}^r L_i'$
for some line bundles $L_i$ and $L_i'$ on $S$.

\subsubsection{Birkhoff factorization and generalized mirror transformation}

The splitting assumption allows one to apply the $\mathbb{C}^\times$
localizations along the fibers of the toric bundle $X \to S$.
Using this and other sophisticated technical tools,
J.~Brown (and A.~Givental) \cite{jB} proved that the
\emph{hypergeometric modification}
\begin{equation*}
I^X(D, \bar t, z, z^{-1}) := \sum_{\beta} q^\beta e^{\frac{D}{z} + (D.\beta)}I^{X/S}_\beta(z, z^{-1}) \bar\psi^*
J^S_{\beta_S}(\bar t, z^{-1})
\end{equation*}
lies in Givental's \emph{Lagrangian cone} generated by $J^X(\tau, z^{-1})$.
Here $D = t^1 h + t^2 \xi$, $\bar t \in H(S)$, $\beta_S = \bar\psi_* \beta$, and
the explicit form of $I^{X/S}_{\beta}$ is given in \S\ref{I-function}.

Based on Brown's theorem, we prove the following theorem.
(See \S\ref{IJ-BF/GMT} for notations on higher derivatives $\p^{z\e}$'s.)

\begin{theorem}[(BF/GMT)] \label{thm:0.4}
There is a unique matrix factorization
$$
 (\p^{z\e} I(z, z^{-1})) = (z\nabla J(z^{-1})) B(z),
$$
called the \emph{Birkhoff factorization} (BF) of $I$,
valid along $\tau  = \tau(D, \bar t, q)$.
\end{theorem}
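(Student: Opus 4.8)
The plan is to read the statement off the structure of Givental's Lagrangian cone, with Brown's theorem supplying the essential input that $I$ lies on that cone. I work in Givental's symplectic space $\mathcal{H}_X=H(X)\otimes R((z^{-1}))$ over the Novikov ring $R$, with its Lagrangian cone $\mathcal{L}_X$ and $J$-slice $J^X$, and fix a basis $\{T_\mu\}$ of $H(X)$. Two structural facts are used. (i) The $J$-slice is transverse: every point of $\mathcal{L}_X$ lies on a unique ruling $z\,T_{J^X(\sigma)}\mathcal{L}_X$, and all points of a given ruling share the tangent space $L_\sigma:=T_{J^X(\sigma)}\mathcal{L}_X$. (ii) $L_\sigma$ is a free $R[z]$-module of rank $N=\dim H(X)$, with $R[z]$-basis the $z$-derivatives $z\p_\mu J^X(\sigma)$ (the columns of $z\nabla J^X(\sigma)=\mathrm{Id}+O(z^{-1})$, the canonical fundamental solution of the Dubrovin connection), and the family $\{L_\sigma\}_\sigma$ is stable under each $z\p_\mu$, by the quantum differential equation $z\p_\mu(z\p_\nu J^X)=\sum_\lambda(T_\mu\ast_\sigma T_\nu)^\lambda\,z\p_\lambda J^X$. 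By Brown's theorem $I(D,\bar t,z)$ is a point of $\mathcal{L}_X$ for every $(D,\bar t)$; I then \emph{define} $\tau=\tau(D,\bar t,q)$ to be the parameter of the ruling through it. This $\tau$ is the generalized mirror map; since $I$ generically carries a nontrivial polynomial-in-$z$ part coming from $I^{X/S}_\beta$, it is not the naive mirror transform (a single Taylor coefficient of $I$) but must be extracted recursively in $q$ by a Birkhoff-type procedure.

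\textbf{Existence.} Regard $I(D,\bar t,z)$ as a section of $\mathcal{L}_X$ in the variables $t^1,t^2,\bar t$. The first derivatives $z\p_{t^1}I,\,z\p_{t^2}I,\,z\p_{\bar t^a}I$ lie in $z\,T_I\mathcal{L}_X=z\,T_{J^X(\tau)}\mathcal{L}_X\subset L_\tau$; each of these is again a section of $\mathcal{L}_X$ lying in the same ruling as $I$, so by induction every derivative $\p^{z\e}I$ — where $\e$ ranges over a system of iterated $t^1,t^2,\bar t$-derivatives realizing a basis of $H(X)$ (through the factors $e^{D/z}$ and $\bar\psi^* J^S$ in $I$; here classical Leray--Hirsch enters, $H(X)$ being generated over $H(S)$ by the relative divisors $h,\xi$) — again lies in $z\,T_{J^X(\tau)}\mathcal{L}_X\subset L_\tau$. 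Expanding $\p^{z\e}I$ in the $R[z]$-basis given by the columns of $z\nabla J^X(\tau)$ then yields a matrix $B(z)$, polynomial in $z$, with $\p^{z\e}I=z\nabla J^X(\tau)\,B(z)$.

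\textbf{Uniqueness.} Here one first checks that $B$ is invertible over $R[z]$, equivalently that the $\p^{z\e}$-derivatives of $I$ themselves form an $R[z]$-basis of $L_\tau$: at $q=0$ the objects $I$, $J^X$ and the mirror map degenerate to their classical values, and the transition matrix to $\{z\p_\mu J^X\}$ is unipotent by Leray--Hirsch; this persists $q$-adically. Then, given two factorizations $z\nabla J^X(\sigma)\,B'=z\nabla J^X(\tau)\,B$, the matrix $(z\nabla J^X(\tau))^{-1}\,z\nabla J^X(\sigma)=B\,(B')^{-1}$ is simultaneously of the form $\mathrm{Id}+O(z^{-1})$ and polynomial in $z$, hence equals $\mathrm{Id}$; so $\sigma=\tau$ and $B'=B$.

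\textbf{The main obstacle.} Granting Brown's theorem, the argument is in outline an application of the cone formalism, and the real content lies in two places. The first is turning $\p^{z\e}$ and $z\nabla J$ into honest $N\times N$ objects: one must show that differentiating $I$, which depends only on the $2+\dim H(S)$ parameters $t^1,t^2,\bar t$, already fills out the rank-$N$ frame $L_\tau$ — i.e.\ that the classical Leray--Hirsch generation of $H(X)$ over $H(S)$ by the relative divisors $h,\xi$ lifts to the level of the Dubrovin $\D$-module. This is precisely the perspective that the quantum Leray--Hirsch theorem makes systematic, extending the base $\D$-module by the Picard--Fuchs system of the toric fibers, and it is what underlies the invertibility step above. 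The second is the effective, order-by-order-in-$q$ determination of $\tau(D,\bar t,q)$ and of $B(z)$, controlling in particular the (bounded-degree) polynomial-in-$z$ part of $I$ that is responsible for the nontriviality of $B$; this is carried out with the explicit form of $I^{X/S}_\beta$, after which the ring-theoretic bookkeeping — over $R$, formal in $q$ and in $t^1,t^2,\bar t$ — is routine.
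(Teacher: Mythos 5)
Your proof is correct and follows the same cone-theoretic spine as the paper (Proposition \ref{BF0} together with Theorem \ref{BF/GMT}): use Brown's theorem to put $I$ on $\mathcal{L}_X$, observe that iterated $z\p$-derivatives in the $t^1,t^2,\bar t$-directions stay in the tangent space $L_\tau$, and invoke the classical Leray--Hirsch structure (the $T_\e=\bar T_s D^e$ form a basis) to see that the resulting $N\times N$ matrix $B$ is $\mathrm{Id}$ modulo Novikov variables, hence invertible. Two small imprecisions are worth flagging. First, $z\p_a I$ lies in $T_{zI}\mathcal{L}_X=L_\tau$, not in $z\,T_I\mathcal{L}_X$; it is $z\cdot(z\p_a I)\in zL_\tau$ that is a point of $\mathcal{L}_X$, and the paper's bookkeeping (tangent vector $\to$ multiply by $z$ $\to$ tangent vector) is the clean way to set up the induction. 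Second, the coefficient ring is $R\{z\}=\{\sum q^\beta a_\beta(z):a_\beta\in\mathbb{C}[z]\}$ (polynomial in $z$ only mod each $q^\beta$), not $R[z]$; your polynomiality argument survives because the relevant intersection with $\mathrm{Id}+O(z^{-1})$ is taken $q$-adically, but the distinction should be stated.

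Where you differ genuinely is the uniqueness step, and your route is actually more direct at the level of the statement as posed. You argue abstractly: given two factorizations $z\nabla J(\sigma)B'=z\nabla J(\tau)B$, the matrix $(z\nabla J(\tau))^{-1}z\nabla J(\sigma)=B(B')^{-1}$ lies both in $\mathrm{Id}+z^{-1}\mathrm{End}(H)[\![z^{-1}]\!]\otimes R$ and in $R\{z\}$, forcing it to be $\mathrm{Id}$, whence $\sigma=\tau$ and $B=B'$. The paper instead proves uniqueness of the scalar degree-$\Lambda^+$ differential operator $P(z)$ with $P(z)I=1+O(1/z)$ by a recursive top-$z$-power elimination in the $NE(X)$-adic topology (Theorem \ref{BF/GMT}), and derives $\tau$ and $B$ from $P$. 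The two arguments buy different things: yours is shorter and isolates uniqueness as a consequence of the Birkhoff decomposition being unique, whereas the paper's constructive recursion is load-bearing for the rest of the argument --- it is what produces the effective formula for $\tau(\hat t)$ and $B$ and what later feeds into the quantum Leray--Hirsch machinery and the $\T$-invariance check. Your ``main obstacle'' remarks correctly identify where the substance lies; the paper's Theorem \ref{BF/GMT} is precisely the effective version you defer to as routine.
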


BF can be stated in another way.
There is a recursively defined polynomial differential operator
$P(z, q; \p) = 1 + O(z)$ in $t^1, t^2$ and $\bar t$ such that
$$J (z^{-1})= P(z, q; \p) I(z, z^{-1}).$$
In other words, $P$ removes the $z$-polynomial part of $I$ in the
$NE(X)$-adic topology.
In this form, the \emph{generalized mirror transform} (GMT)
\begin{equation*}
\tau(D, \bar t, q) = D + \bar t + \sum_{\beta \ne 0} q^\beta \tau_\beta(D, \bar t)
\end{equation*}
is the coefficient of $z^{-1}$ in $J = PI$.

\subsubsection{Hypergeometric modification and $\D$ modules} 

In principle, knowing BF, GMT and GW invariants on $S$ allows us to calculate all $g = 0$ invariants on $X$ and $X'$
by reconstruction.
These data are in turn encoded in the $I$-functions.
One might be tempted to prove the $\T$-invariance by comparing
$I^X$ and $I^{X'}$.
While they are rather symmetric-looking,
the defect of cup product implies $\T I^X \neq I^{X'}$ and
the comparison via tracking the defects of ring isomorphism becomes hopelessly
complicated. This can be overcome by studying a more ``intrinsic'' object:
the cyclic $\D$ module $\mathscr{M}_J = \D J$, where $\D$ denotes the ring of
differential operators on $H$ with suitable coefficients.

It is well known by the topological recursion relations (TRR) that $(z\p_\mu J)$
forms a \emph{fundamental solution matrix} of the Dubrovin connection: 
Namely we have the \emph{quantum differential equations} (QDE)
$$
z\p_\mu z\p_\nu J = \sum _\kappa \tilde C_{\mu\nu}^\kappa (t)\, z\p_\kappa J,
$$
where $\tilde C_{\mu\nu}^\kappa (t) = \sum_\iota g^{\kappa\iota} \p^3_{\mu\nu\iota} F_0(t)$ are the structural constants of $*_t$. This implies that $\mathscr{M}$ is a \emph{holonomic} $\D$ module of length $N = \dim H$. For $I$ we consider a similar $\D$ module $\mathscr{M}_I = \D I$. The BF/GMT theorem furnishes a change of basis which implies that $\mathscr{M}_I$ is also holonomic of length $N$.

The idea is to go backward: To find $\mathscr{M}_I$ first and then transform it
to $\mathscr{M}_J$. 
We do not have similar QDE since $I$ does not have enough variables. 
Instead we construct higher order \emph{Picard--Fuchs equations} 
$\Box_\ell I = 0$, $\Box_\gamma I = 0$ in divisor variables, with the nice 
property that ``up to analytic continuations'' they generate $\T$-invariant 
ideals:
$$
\T \langle \Box^X_\ell, \Box^X_\gamma\rangle \cong \langle \Box^{X'}_{\ell'}, 
\Box^{X'}_{\gamma'}\rangle.
$$

\subsubsection{Quantum Leray--Hirsch and the conclusion of the proof} 
Now we want to determine $\mathscr{M}_I$.
While the derivatives along the fiber directions are determined by the
Picard--Fuchs equations, we need to find the derivatives along the base 
direction.
Write $\bar t = \sum \bar t^i \bar T_i$. 
This is achieved by \emph{lifting} the QDE on $QH(S)$, namely
$$
z\p_{i} z\p_{j} J^S = \sum _k \bar C_{ij}^k(\bar t)\, z\p_{k} J^S,
$$
to a differential system on $H(X)$. A key concept needed for such a lifting 
is the $I$-minimal lift
of a curve class $\beta_S \in NE(S)$ to $\beta_S^I \in NE(X)$.
Various lifts of curve classes are discussed in Section~\ref{s:6}.
See in particular Definition~\ref{d:imlift}.

Using Picard--Fuchs and the lifted QDE, we show that $\T \mathscr{M}_{I^X} \cong \mathscr{M}_{I^{X'}}$.
\begin{theorem} [(Quantum Leray--Hirsch)] \label{thm:0.5} {\ }
	
\begin{itemize}
\item[(1)]
($I$-Lifting) The quantum differential equation on $QH(S)$ can be lifted to $H(X)$ as
\begin{equation*}
z\p_i \,z\p_j I = \sum_{k, \beta_S} q^{\beta_S^I} e^{(D.\beta_S^I)}\bar C_{ij, \beta_S}^k(\bar t)\, z\p_k D_{\beta^I_S}(z) I,
\end{equation*}
where $D_{\beta^I_S}(z)$ is an operator depending only on $\beta_S^I$.
Any other lifting is related to it modulo the Picard--Fuchs system.
\item[(2)]
Together with the Picard--Fuchs $\Box_\ell$ and $\Box_\gamma$,
they determine a first order matrix system under the naive quantization
$\p^{z\e}$ (Definition~\ref{d:7.7}) of
canonical basis (Notations~\ref{n:1}) $T_\e$'s of $H(X)$:
$$
z\p_a (\p^{z\e} I) = (\p^{z\e} I) C_a(z, q), \qquad \mbox{where $t^a= t^1$, $t^2$ or $\bar t^i$}.
$$
\item[(3)]
The system has the property that for any fixed $\beta_S \in NE(S)$, the coefficients are formal functions in $\bar t$ and polynomial functions in $q^\gamma e^{t^2}$, $q^\ell e^{t^1}$ and $\f(q^\ell e^{t^1})$. Here the basic rational function
\begin{equation} \label{rat-f}
\f(q) := q/(1 - (-1)^{r  + 1}q)
\end{equation}
is the ``origin of analytic continuation'' satisfying $\f(q) + \f(q^{-1}) = (-1)^r$.
\item[(4)]
The system is $\T$-invariant.
\end{itemize}
\end{theorem}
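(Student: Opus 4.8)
\emph{Proof strategy.}
The plan is to build the asserted first-order matrix connection out of two independent inputs --- the Dubrovin connection of $S$, lifted to $X$ (this is (1)), and the Picard--Fuchs operators $\Box_\ell$, $\Box_\gamma$ in the divisor variables $t^1,t^2$ (which, combined with (1) and organized in the basis $T_\e$, give the closed system (2)) --- and then to read off the coefficients (3) and deduce $\T$-invariance (4) from the functional equation of $\f$. For (1) I would begin from the Novikov-graded form of the quantum differential equation on $QH(S)$,
$$
z\p_i\,z\p_j J^S_{\beta_S}=\sum_{k,\ \beta_S'+\beta_S''=\beta_S}\bar C_{ij,\beta_S'}^{k}(\bar t)\,z\p_k J^S_{\beta_S''},
$$
and apply $z\p_i\,z\p_j$ to the defining series of $I^X$. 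Since $D=t^1h+t^2\xi$ involves only fiber divisors, $z\p_i$ and $z\p_j$ act only on the factor $\bar\psi^*J^S_{\beta_S}(\bar t,z^{-1})$, so after inserting the equation above the computation reduces to comparing the hypergeometric weights $I^{X/S}_\beta$ attached to the various lifts of a fixed base class. The decisive input is the explicit recursive structure of $I^{X/S}_\beta$ from \S\ref{I-function}: raising the curve class by the $I$-minimal lift $\beta_S^I$ of a base class (Definition~\ref{d:imlift}) multiplies $I^{X/S}_\beta$ by an explicit $z$-rational factor, which I package into the operator $D_{\beta^I_S}(z)$; matching Novikov exponents then forces the prefactor $q^{\beta_S^I}e^{(D.\beta_S^I)}$ and produces the stated lifted equation. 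Replacing $\beta_S^I$ by another lift, i.e.\ shifting by a fiber class $\gamma$, changes $D_{\beta^I_S}(z)$ only by an operator in the left ideal generated by $\Box_\gamma$, which annihilates $I$; this is the ``modulo the Picard--Fuchs system'' clause.

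For (2) I would pass to the canonical Leray--Hirsch basis $T_\e$ (Notations~\ref{n:1}) and the naive quantization $\p^{z\e}$ (Definition~\ref{d:7.7}), so that $(\p^{z\e}I)$ is an $N\times N$ matrix with $N=\dim H(X)$. The operators $\Box_\ell,\Box_\gamma$ express every higher $z\p_1$-, $z\p_2$-derivative of $I$ as a combination of the entries of $(\p^{z\e}I)$ with coefficients in the asserted ring; this is holonomicity in the fiber directions. For a base variable $\bar t^i$, part (1) expresses $z\p_i\,z\p_j(\p^{z\e}I)$ through $z\p_k D_{\beta^I_S}(z)(\p^{z\e}I)$, and expanding $D_{\beta^I_S}(z)$ in the divisor operators $z\p_1,z\p_2$ and reducing once more by $\Box_\ell,\Box_\gamma$ returns everything to the span of $(\p^{z\e}I)$. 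Collecting the three families of relations yields $z\p_a(\p^{z\e}I)=(\p^{z\e}I)\,C_a(z,q)$ for $t^a\in\{t^1,t^2,\bar t^i\}$; flatness of $\{C_a\}$ is automatic, every relation used being a differential consequence of the single function $I$.

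For (3) I would trace the provenance of the entries of $C_a$: the structure constants $\bar C_{ij,\beta_S}^{k}(\bar t)$ of $QH(S)$ contribute exactly the dependence that is formal in $\bar t$; the hypergeometric weights together with the Novikov monomials $q^\beta e^{(D.\beta)}$ contribute polynomial dependence on $q^\gamma e^{t^2}$ and $q^\ell e^{t^1}$; and the only step introducing a non-polynomial function is the reduction by $\Box_\ell$, where resumming the geometric contribution of the extremal ray $\ell$ produces precisely $\f(q^\ell e^{t^1})$, no other denominator surviving in the explicit form of $I^{X/S}_\beta$. Part (4) then follows formally: under the flop the extremal class is reversed, so the relevant analytic continuation sends $q^\ell e^{t^1}$ to $(q^\ell e^{t^1})^{-1}$; $\T$ identifies the two Leray--Hirsch bases and carries $\langle\Box^X_\ell,\Box^X_\gamma\rangle$ onto $\langle\Box^{X'}_{\ell'},\Box^{X'}_{\gamma'}\rangle$, while $QH(S)$, hence each $\bar C_{ij,\beta_S}^{k}$, is literally unchanged. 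Since by (3) each coefficient of $C_a$ is a polynomial in $\f(q^\ell e^{t^1})$ over this invariant data and $\f(q)+\f(q^{-1})=(-1)^r$, each coefficient is carried to one of the same shape, and a term-by-term comparison identifies the continued $X$-system with the $X'$-system.

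The main obstacle is the construction underlying (1) together with the bookkeeping in (3): one must pin down the $I$-minimal lift $\beta_S^I$ and the companion operator $D_{\beta^I_S}(z)$ sharply enough that, simultaneously, (a) the lifted quantum differential equation genuinely closes up against $\Box_\ell,\Box_\gamma$, and (b) the resummation along the extremal ray collapses to $\f$ and to nothing worse. Merely exhibiting some lift does not suffice; minimality in exactly the sense that makes both (a) and (b) hold is the heart of the matter. Once the explicit system is in hand with all coefficients polynomial in $\f$, parts (2) and (4) become comparatively routine.
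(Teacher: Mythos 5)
Your overall architecture --- lift the Dubrovin connection from $S$ via the explicit relative factor $I^{X/S}_\beta$ to get an operator $D_{\beta_S^I}(z)$, then close the system in the fiber variables with $\Box_\ell, \Box_\gamma$, and locate the unique non-polynomial contribution $\f(q^\ell e^{t^1})$ in the leading coefficient of $\Box_\ell$ --- is the same as the paper's.

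However, there is a genuine gap in your sketch of part (4). You argue that since $QH(S)$ is unchanged, $\T$ carries the Picard--Fuchs ideal to the Picard--Fuchs ideal, and the coefficients are polynomials in $\f$, a ``term-by-term comparison'' identifies the two systems. This skips the crucial asymmetry between the $I$-minimal lifts on the two sides. The coefficient $C_a$ carries the factor $q^{\bar\beta^I} e^{D.\bar\beta^I}$ coming from the $I$-lifted QDE, and the corresponding factor on the $X'$-side is $q^{\bar\beta^{I'}}$. These match under $\T$ only when $\mu + \mu' \ge 0$; when $\mu + \mu' < 0$, one finds $\T\bar\beta^I - \bar\beta^{I'} = (\mu + \mu')\ell' \neq 0$, so the naive comparison fails. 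The paper resolves this precisely by exploiting the ``modulo Picard--Fuchs'' freedom from part (1): when $\mu + \mu' < 0$ one replaces $\bar\beta^I$ by the non-minimal but still admissible lift $\bar\beta^I - \delta\ell$ with $\delta := -(\mu + \mu') > 0$ (a check that $n_i, n'_i, n'_{r+1}$ stay non-negative using the explicit gap), and only then does $\T$ take the lifted QDE into the primed one. Your proposal mentions the ``modulo Picard--Fuchs'' clause as part of (1) and names the difficulty of choosing the right lift in your closing paragraph, but you do not connect it to (4), where it is in fact the entire content. Relatedly, in part (1) you say two lifts differ by ``shifting by a fiber class $\gamma$''; in fact admissible lifts differ by $a\ell + b\gamma$ and one needs both $\Box_\ell$ and $\Box_\gamma$, reduced step-by-step through intermediate admissible classes, to relate the resulting $D$-operators.

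A smaller point: in (2), the reduction of $D_{\bar\beta^I}(z)$ and of double base derivatives introduces new base derivatives that must be fed back into the $I$-lifting; the termination of this process for fixed $\bar\beta_S$ is not automatic and the paper proves it by combining the strict increase in $NE(S)$-degree with a strict increase in cohomology degree for the $q^{\bar\beta_1} = 1$ piece (the structure constants of cup product on $H(S)$). Your sketch treats this closure as immediate.
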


The final step is to go from $\mathscr{M}_I$ to $\mathscr{M}_J$.
From the perspective of $\D$ modules, the BF can be considered as
a \emph{gauge transformation}.
The defining property $(\p^{z\e} I) = (z\nabla J) B$ of $B$ can be rephrased as
$$z\p_a (z\nabla J) = (z\nabla J)\tilde C_a$$
such that
\begin{equation} \label{e:0.1}
  \tilde C_a = (-z \p_a B + BC_a)B^{-1}
\end{equation}
is independent of $z$.

This formulation has the advantage that all objects in \eqref{e:0.1} are expected to be $\T$-invariant (while $I$ and $J$ are not).
It is therefore easier to first establish the $\T$-invariance of $C_a$'s and
use it to derive the $\T$-invariance of BF and GMT. As a consequence, this allows to deduce the type I quasi-linearity (Proposition \ref{red-GMT}), and hence the invariance of big quantum rings for local models.

\begin{theorem}[(Quantum invariance)] \label{main-thm}
For ordinary flops of splitting type, the big quantum cohomology ring is invariant up to analytic continuations.
\end{theorem}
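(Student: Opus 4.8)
The strategy is to reduce everything to the local models, where the structure theorems proved above (BF/GMT, Quantum Leray--Hirsch) apply. By the reductions carried out in Part~I, the invariance of the big quantum ring for an arbitrary ordinary flop of splitting type follows from the \emph{type I quasi-linearity property} for the local models $X = \tilde E$, $X' = \tilde E'$, i.e.\ the $\T$-invariance of the one-pointed descendent fiber series $\langle \bart_1, \cdots, \bart_{n-1}, \tau_k a\xi\rangle$ with $\bart_i \in H(S)$ and $\xi$ the common infinity divisor. So the entire content is to prove this quasi-linearity statement on $X$, and the plan is to obtain it as a corollary of the $\T$-invariance of the $z$-independent connection matrices $\tilde C_a$ from \eqref{e:0.1}.

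First I would set up the cyclic $\D$-modules $\M_I = \D I$ and $\M_J = \D J$ and recall, via TRR/QDE, that $(z\nabla J)$ is a fundamental solution matrix, so that $\M_J$ is holonomic of length $N = \dim H(X)$; by Theorem~\ref{thm:0.4} (BF/GMT) the change of basis $B(z)$ shows $\M_I$ is holonomic of the same length. Next, using Theorem~\ref{thm:0.5}, I would assemble the full first-order matrix system $z\p_a(\p^{z\e}I) = (\p^{z\e}I)\,C_a(z,q)$ for $a$ running over the divisor and base variables $t^1, t^2, \bart^i$: the fiber directions are governed by the Picard--Fuchs operators $\Box_\ell I = 0$, $\Box_\gamma I = 0$, and the base directions by the $I$-lifting of the Dubrovin connection on $QH(S)$ along $I$-minimal lifts $\beta_S^I$. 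The crucial structural input is part~(3): each $C_a$ is, for fixed $\beta_S$, a formal function in $\bart$ and a \emph{polynomial} in $q^\gamma e^{t^2}$, $q^\ell e^{t^1}$ and the single rational function $\f(q^\ell e^{t^1}) = q^\ell e^{t^1}/(1-(-1)^{r+1}q^\ell e^{t^1})$, together with part~(4), the $\T$-invariance of this system. The point of isolating $\f$ is that the flop $\T$ acts on the K\"ahler/curve variables by $q^\ell \mapsto q^{-\ell}$ (up to the canonical correspondence), and $\f(q)+\f(q^{-1}) = (-1)^r$ is exactly the identity making the analytic continuation of the $C_a$'s across the flop wall well-defined and single-valued; thus $\T C_a^X = C_a^{X'}$ after analytic continuation.

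With the $\T$-invariant system $\{C_a\}$ in hand, I would then invoke the Birkhoff factorization $(\p^{z\e}I) = (z\nabla J)B$ as a gauge transformation and form $\tilde C_a = (-z\p_a B + B C_a)B^{-1}$; the defining property of BF forces $\tilde C_a$ to be independent of $z$, so $\tilde C_a$ is the Dubrovin connection matrix of $QH(X)$ in the canonical frame. Since $B$ and $C_a$ are each $\T$-invariant under analytic continuation (the $\T$-invariance of $B$ being deduced from that of $C_a$ and the uniqueness in Theorem~\ref{thm:0.4}), formula~\eqref{e:0.1} yields $\T \tilde C_a^X = \tilde C_a^{X'}$. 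Reading off the structure constants $\tilde C_{\mu\nu}^\kappa$ of $*_t$ from $\tilde C_a$ — and recovering the GMT $\tau = \tau(D,\bart,q)$ as the $z^{-1}$-coefficient of $J = PI$, which is then also $\T$-invariant — gives the $\T$-invariance of the big quantum product on the local models, hence the type I quasi-linearity (Proposition~\ref{red-GMT}) and therefore the theorem.

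\textbf{Main obstacle.} The hard part is everything feeding into Theorem~\ref{thm:0.5}, and within the present argument the delicate point is controlling the \emph{analytic continuation}: one must show that the non-polynomial dependence of the system on the flopping variable is confined entirely to the single function $\f$, so that the identity $\f(q)+\f(q^{-1})=(-1)^r$ suffices to match $X$ and $X'$ — this is where the nontrivial Birkhoff factorization genuinely enters, in contrast to the semi-Fano toric case. A secondary difficulty is verifying that the $I$-minimal lifting $\beta_S \mapsto \beta_S^I$ is compatible with $\T$ (so that the lifted QDE on $X$ maps to the lifted QDE on $X'$), and that the ambiguity in the lift is absorbed by the Picard--Fuchs ideal, so that the resulting $\M_I$ is well-defined and its $\T$-image is $\M_{I^{X'}}$. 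Once the $\T$-invariance of the $z$-independent $\tilde C_a$ is secured, the passage back to quantum product invariance and to quasi-linearity is formal.
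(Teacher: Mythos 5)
Your proposal follows the paper's proof: reduce to local models via Part~I, build the $\T$-invariant first-order system $\{C_a\}$ via BF/GMT and the quantum Leray--Hirsch theorem, pass to the $z$-independent $\tilde C_a$ by the gauge transformation \eqref{e:0.1}, and deduce $\T$-invariance of $\tau$ and the quantum product, closing via Proposition~\ref{red-GMT} and Lemma~\ref{naive-ql}. One caution on a step you compress: the paper explicitly warns that the $\T$-invariance of $B$ is \emph{not} manifest from the defining relation $\p^{z\e}I = (z\nabla J)B$ together with the uniqueness in Theorem~\ref{thm:0.4}, precisely because neither $I$ nor $J$ is $\T$-invariant, so ``uniqueness of the factorization'' alone does not transport $\T$-invariance. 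What actually delivers $\T B \cong B'$ is the \emph{cancellation equation} $z\p_a B = BC_a - B_0 C_{a;0}B_0^{-1}B$, solved inductively over the weight filtration $w=(\bar\beta,d_2)\in W$, where the only inputs are $B$ itself and the $\T$-invariant $C_a$'s, with initial condition $B_{(0,0)}=\mathrm{Id}$; this inductive argument is the essential extra step needed to convert ``uniqueness of a $z$-killing gauge transform for a $\T$-invariant system'' into the statement $\T B_{w,j} = B'_{w,j}$. Your framework (gauge transform of the $\T$-invariant system) is right; just replace the appeal to BF-uniqueness with the iterated cancellation equation.
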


By the reduction procedure in Part I, this is equivalent to the quasi-linearity property
of the local models. This completes the outline.

\begin{remark}
Results in this paper had been announced, in increasing degree of generalities,
by the authors in various conferences during 2008-2012; 
see e.g.~\cite{Lin, Wang4, LLW2, LW} where more example-studies can be found. Examples on quantum Leray--Hirsch are included in \S \ref{MSJ-ex}. The complete proofs of Theorem \ref{thm:0.5} and \ref{main-thm} were achieved in mid-2011. 

It might seem possible to prove Theorem \ref{main-thm} directly 
from comparisons of $J$-functions and Birkhoff factorizations on $X$ and $X'$.
Indeed, we were able to carry this out for various special cases. Mysterious \emph{regularization phenomenon} appears during such a direct approach. In the Appendix 
we explain how regularization of certain rational functions leads to the beginning steps of analytic continuations in our context. However the combinatorial complexity becomes intractable (to us) in the general case. Some examples can be found in the proceedings articles referred above.
\end{remark}

In Part III \cite{LLQW}, the final part of this series, we will develop a quantum splitting principle 
to remove the \emph{splitting assumption} in Theorem \ref{main-thm}. This then completes our study on the quantum invariance under ordinary flops.

\subsection{Acknowledgements}
Y.-P.\ Lee is partially supported by the NSF; H.-W.\ Lin is partially supported by the MOST; C.-L.\ Wang is partially supported by the MOST and the MOE.
We are particularly grateful to Taida Institute of Mathematical Sciences (TIMS)
for its steady support which makes this long-term collaborative project
possible. We would also like to thank the anonymous referee for pointing out several typographical errors in an earlier version of the paper.

\section{Birkhoff factorization} \label{IJ-BF/GMT}

In this section, a general framework for calculating the $J$ function for a 
split toric bundle is discussed.
It relies on a given (partial) section $I$ of the Lagrangian cone generated by 
$J$. 
The process to go from $I$ to $J$ is introduced in a constructive manner, 
and Theorem \ref{thm:0.4} will be proved 
($=$ Proposition \ref{BF0} + Theorem \ref{BF/GMT}).

\subsection{Lagrangian cone and the $J$ function}

We start with Givental's symplectic space reformulation of Gromov--Witten 
theory arising from the
\emph{dilaton, string, and topological recursion relation}. 
The main references for this section are \cite{aG1, CG},
with supplements and clarification from \cite{LP2, ypL}.
In the following, the underlying ground ring is the
\emph{Novikov ring}
$$
R =\widehat{\mathbb{C}[NE(X)]}.
$$
All the complicated issues on completion are deferred to \cite{LP2}.

Let $H:= H(X)$, $\mathcal{H} := H[z,z^{-1}]\!]$, $\mathcal{H}_+ :=H[z]$
and $\mathcal{H}_- := z^{-1}H[\![z^{-1}]\!]$.
Let $1 \in H$ be the identity.
One can identify $\mathcal{H}$ as $T^*\mathcal{H}_+$ and this gives a
canonical symplectic structure and a vector bundle structure on $\mathcal{H}$.

Let
\[
  \q(z) = \sum_{\mu} \sum_{k=0}^{\infty} \q^{\mu}_k T_{\mu} z^k  \in \mathcal{H}_+
\]
be a general point, where $\{ T_{\mu} \}$ form a basis of $H$.
In the Gromov--Witten context, the natural coordinates on $\mathcal{H}_+$
are ${\bf t}(z) = \q(z) + 1 z$ (dilaton shift),
with ${\bf t}(\psi) = \sum_{\mu, k} t^{\mu}_k T_{\mu} \psi^k$
serving as the general descendent insertion.
Let $F_0({\bf t})$ be the generating function of genus zero \emph{descendent} Gromov--Witten invariants on $X$.
Since $F_0$ is a function on $\mathcal{H}_+$, the one form $dF_0$ gives a section of
$\pi:\mathcal{H} \to \mathcal{H}_+$.

Givental's \emph{Lagrangian cone} $\mathcal{L}$ is defined as the graph of $dF_0$,
which is considered as a section of $\pi$.
By construction it is a Lagrangian subspace.
The existence of $\mathbb{C}^*$ action on $\mathcal{L}$
is due to the dilaton equation $\sum \q^\mu_k {\p}/{\p \q^\mu_k} F_0 = 2 F_0$. Thus $\mathcal{L}$ is a cone with vertex $\q = 0$ (c.f.~\cite{aG1, ypL}).

Let $\tau = \sum_{\mu} \tau^{\mu} T_{\mu} \in H$.
Define the (big) $J$-function to be
\begin{equation} \label{e:5.1}
 \begin{split}
  J^X(\tau, z^{-1}) &= 1 + \frac{\tau}{z} + \sum_{\beta, n, \mu} \frac{q^{\beta}}{n!}
  T_{\mu} \left\langle \frac{T^{\mu}}{z(z-\psi)}, \tau, \cdots, \tau \right\rangle_{0,n+1, \beta}\\
 &= e^{\frac{\tau}{z}} + \sum_{\beta \neq 0, n, \mu} \frac{q^{\beta}}{n!}
  e^{\frac{\tau_1}{z} + (\tau_1.\beta)} T_{\mu} \left\langle \frac{T^{\mu}}{z(z-\psi)}, \tau_2, \cdots, \tau_2 \right\rangle_{0,n+1, \beta} ,
 \end{split}
\end{equation}
where in the second expression $\tau  = \tau_1 + \tau_2$ with $\tau_1 \in H^2$. The equality follows from the divisor equation for descendent invariants. Furthermore, the string equation for $J^X$ says that we can take out the fundamental class $1$ from the variable $\tau$ to get an overall factor $e^{\tau^0/z}$ in front of (\ref{e:5.1}).

The $J$ function can be considered as a map from $H$ to $z\mathcal{H}_-$.
Let $L_{\mathbf{f}} = T_{\mathbf{f}}\mathcal{L}$ be the tangent space of $\mathcal{L}$ at $\mathbf{f} \in \mathcal{L}$.
Let $\tau \in H$ be embedded into $\mathcal{H}_+$ via
\[
  H \cong -1 z + H \subset \mathcal{H}_+ .
\]
Denote by  $L_{\tau} = L_{(\tau, dF_0(\tau))}$. Here we list the basic structural results from \cite{aG1}:

\begin{itemize}
\item[(i)] $zL_{\tau} \subset L_{\tau}$ and so $L_{\tau}/ zL_{\tau} \cong \mathcal{H}_+/z\mathcal{H}_+ \cong H$ has rank $N := \dim H$.

\item[(ii)] $L_{\tau} \cap \mathcal{L} = zL_{\tau}$, considered as subspaces inside $\mathcal{H}$.

\item[(iii)] The subspace $L_{\tau}$ of $\mathcal{H}$ is the tangent space at
every $\mathbf{f} \in zL_{\tau} \subset \mathcal{L}$. Moreover, $T_{\mathbf{f}} = L_{\tau}$ implies that $\mathbf{f} \in zL_{\tau}$.
$zL_{\tau}$ is considered as the \emph{ruling} of the cone.

\item[(iv)] The intersection of $\mathcal{L}$ and the affine space
$- 1z + z\mathcal{H}_-$
is parameterized by its image $ -1 z + H \cong H \ni \tau$
via the projection by $\pi$.
$$
-z J^X(\tau, -z^{-1}) = -1z + \tau + O(1/z)
$$
is the function of $\tau$ whose graph is the intersection.

\item[(v)] The set of all directional derivatives $z\p_\mu J^X = T_\mu + O(1/z)$ spans an $N$ dimensional subspace of $L_{\tau}$, namely $L_{\tau} \cap z \mathcal{H}_-$, such that
its projection to $L_{\tau}/zL_{\tau}$ is an isomorphism.
\end{itemize}

Note that we have used the convention of the $J$ function which differs from
that of some more recent papers \cite{aG1, CG} by a factor $z$.

\begin{lemma} \label{l:1}
$z \nabla J^X = (z \partial_{\mu} J^{\nu})$ forms a matrix whose column
vectors $z \partial_{\mu} J^X (\tau)$ generates the tangent space $L_{\tau}$
of the Lagrangian cone $\mathcal{L}$ as an $R \{z\}$-module. Here $a = \sum q^\beta a_\beta(z)\in R\{z\}$ if $a_\beta(z) \in \Bbb C[z]$.
\end{lemma}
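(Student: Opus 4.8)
The plan is to deduce this from the structural facts (i)--(v) quoted from \cite{aG1}, which already contain everything needed; the lemma is essentially a repackaging of (v) together with the cone structure of $\mathcal{L}$. First I would recall that by (iv)--(v), the vectors $z\p_\mu J(\tau) = T_\mu + O(1/z)$ lie in $L_\tau \cap z\mathcal{H}_-$ and their images under $L_\tau \to L_\tau/zL_\tau \cong H$ form a basis; hence they are $R$-linearly independent and span an $N$-dimensional subspace $L_\tau \cap z\mathcal{H}_-$ of $L_\tau$. The content to be proved is that enlarging coefficients from $R$ to $R\{z\}$ promotes this spanning set from a mere complement of $zL_\tau$ to a generating set of all of $L_\tau$.

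The key step is the ``Nakayama-type'' lifting argument along the $z$-adic filtration $L_\tau \supset zL_\tau \supset z^2 L_\tau \supset \cdots$. Given any $\mathbf{f} \in L_\tau$, by (i) its class in $L_\tau/zL_\tau \cong H$ is a unique $R$-linear combination $\sum_\mu c^\mu(z\p_\mu J)$ modulo $zL_\tau$; so $\mathbf{f} - \sum_\mu c^\mu z\p_\mu J \in zL_\tau$, i.e.\ equals $z\mathbf{f}_1$ for some $\mathbf{f}_1 \in L_\tau$ (here I use that $zL = L \cap z\mathcal{H}$, a consequence of (i) and $zL\subset L$). Iterating, I get coefficients $c^\mu_k \in R$ with $\mathbf{f} = \sum_\mu\big(\sum_{k\ge 0} c^\mu_k z^k\big) z\p_\mu J$ as a $z$-adically convergent sum; since $\sum_k c^\mu_k z^k$ is exactly an element of $R\{z\}$ (a power series in $z$ with Novikov-ring coefficients, with $q^\beta$-coefficients being genuine polynomials in $z$ once one tracks the curve-class grading), this exhibits $\mathbf{f}$ in the $R\{z\}$-span of the columns of $z\nabla J$. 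Conversely $z\p_\mu J \in L_\tau$ and $zL_\tau \subset L_\tau$ give that the $R\{z\}$-span is contained in $L_\tau$, so the two agree.

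The main obstacle is the convergence/completion bookkeeping: one must check that the iteration does not leak out of $R\{z\}$, i.e.\ that for each fixed curve class $\beta$ the $z$-series terminates in a polynomial. This is where the definition $a = \sum q^\beta a_\beta(z)$ with $a_\beta(z)\in\mathbb{C}[z]$ in the statement is used: the GW invariants entering $J$ and its derivatives vanish for $\psi$-degree exceeding the (virtual) dimension, which bounds the $z$-powers appearing at each $\beta$, so the process is $\beta$-by-$\beta$ finite and the completion is only in the Novikov (i.e.\ $NE(X)$-adic) direction, handled as in \cite{LP2}. A secondary point is the uniqueness at each stage, which follows because $L_\tau/zL_\tau \xrightarrow{\sim} H$ is an isomorphism of free $R$-modules; this simultaneously yields that the columns of $z\nabla J$ are a \emph{free} $R\{z\}$-basis of $L_\tau$, though only the generation statement is asserted here.
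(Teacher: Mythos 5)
Your proof is correct and is essentially the paper's own argument spelled out in detail: the one-line proof in the text, ``apply (v) to $L/zL$ and multiply by $z^k$ to get $z^kL/z^{k+1}L$,'' is exactly the $z$-adic filtration lifting you describe. One small inaccuracy in the side remark on $\beta$-by-$\beta$ finiteness: the termination of the $z$-series in the coefficients is not really caused by vanishing of GW invariants beyond the virtual dimension; rather, any $\mathbf{f}\in L_\tau\subset\mathcal{H}=H[z,z^{-1}]\!]$ already has bounded positive $z$-degree at each fixed $\beta$ by definition of $\mathcal{H}$, and since the $z\p_\mu J$ contribute no positive $z$-powers (being $T_\mu+O(z^{-1})$), the coefficient $c^\mu_{k,\beta}$ is read off from the $z^k$-part of $\mathbf{f}_\beta$ and hence vanishes for $k$ large.
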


\begin{proof}
Apply (v) to $L_{\tau}/zL_{\tau}$ and multiply $z^k$ to get $z^kL_{\tau}/z^{k+1} L_{\tau}$.
\end{proof}

We see that the germ of $\mathcal{L}$ is determined by an
$N$-dimensional submanifold.
In this sense, $zJ^X$ \emph{generates} $\mathcal{L}$.
Indeed, all discussions are applicable to the Gromov--Witten context only as
\emph{formal germs} around the neighborhood of $\q =-1z$.

\subsection{Generalized mirror transform for toric bundles} \label{s:5.2}
Let $\bar p : X \to S$ be a smooth fiber bundle such that $H(X)$ is generated
by $H(S)$ and \emph{fiber divisors} $D_i$'s as an algebra,
such that there is no linear relation among $D_i$'s and $H^2(S)$.
An example of $X$ is a toric bundle over $S$.
Assume that $H(X)$ is a free module over $H(S)$ with finite generators
$\{ D^e := \prod_{i} D_i^{e_i} \}_{e \in \Lambda^+}$.

Let $\bar t := \sum_s \bar t^s \bar T_s$ be a general cohomology class in $H(S)$,
which is identified with $\bar p^* H(S)$.
Similarly denote  $D = \sum t^i D_i$ the general fiber divisor.
Elements in $H(X)$ can be written as linear combinations of $\{ T_{(s, e)} = \bar T_s D^e \}$. Denote the $\bar T_s$ directional derivative on $H(S)$ by $\p_{\bar T_s} \equiv \p_{\bar t^s}$, and denote the
multiple derivative
\[
 \partial^{(s, e)} := \p_{\bar t^s} \prod_i
 \partial_{t^i}^{e_i} .
\]
Note, however, most of the time $z$ will appear with derivative. For the notational convenience, denote the index $(s, e)$ by $\e$. We then denote
\begin{equation} \label{zp}
\p^{z\e} \equiv \partial^{z(s, e)} := z\p_{\bar t^s} \prod_i
 z\partial_{t^i}^{e_i} = z^{|e| + 1}\partial^{(s, e)}.
\end{equation}

As usual, the $T_\e$ directional derivative on $H(X)$ is denoted by $\p_{\e} = \p_{T_\e}$. This is a special choice of basis $T_\mu$ (and $\p_\mu$) of $H(X)$, which is denoted by
$$
T_{\e} \equiv T_{(s, e)} \equiv \bar T_s D^e; \qquad \e \in \Lambda^+.
$$
The two operators $\p^{z\e}$ and $z\p_{\e}$ are by definition very different, nevertheless they are closely related in the study of quantum cohomology as we will see below.

Assuming that $\bar p : X \to S$ is a toric bundle of the split type, i.e.~toric
quotient of a split vector bundle over $S$. Let $J^S(\bar t, z^{-1})$ be the $J$ function on $S$. The hypergeometric modification of $J^S$ by the $\bar p$-fibration takes the form
\begin{equation} \label{HGM}
I^X(\bar t, D, z, z^{-1}) := \sum_{\beta \in NE(X)} q^\beta e^{\frac{D}{z} + (D.\beta)} I^{X/S}_\beta(z, z^{-1})
J^S_{\beta_S}(\bar t, z^{-1})
\end{equation}
with the relative factor $I^{X/S}_\beta$, whose explicit form for $X = \tilde{E} \to S$ will be given in Section~\ref{I-function}.

The major difficulty which makes $I^X$ being deviated from $J^X$ lies in the fact that in general positive $z$ powers may occur in $I^X$. Nevertheless for each $\beta \in NE(X)$, the power of $z$ in $I^{X/S}_\beta(z, z^{-1})$ is bounded above by a constant depending only on $\beta$. Thus we may study $I^X$ in the space $\mathcal{H} := H[z,z^{-1}]\!]$ over $R$.

Notice that the $I$ function is defined only in the
subspace
\begin{equation}
\hat t := \bar t + D \in H(S) \oplus \bigoplus_i \mathbb{C} D_i \subset H(X).
\end{equation}

We will use the following theorem by J.~Brown (and A.~Givental):

\begin{theorem}[(\cite{jB} Theorem~1)] \label{t:Brown}
$(-z)I^X(\hat t, -z)$ lies in the Lagrangian cone $\mathcal{L}$ of $X$.
\end{theorem}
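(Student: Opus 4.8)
The plan is to establish the assertion $\mathbb{T}$-equivariantly by virtual Atiyah--Bott localization on the moduli of stable maps to $X$, and then pass to the non-equivariant limit. First I would set up the equivariant geometry: realize $X = \tilde E$ as a toric (GIT) quotient of a split vector bundle over $S$, as in the reduction of Part~I, and let $\mathbb{T} = \Bbb C^\times$ act along the fibers of $\bar p : X \to S$ so that the fixed loci $X_0, X_1, \dots$ are themselves projective sub-bundles of $X$ over $S$ (notably the zero and infinity sub-bundles). Lift all data $\mathbb{T}$-equivariantly and work over the equivariant Novikov module with parameter $\lambda$; the non-equivariant limit $\lambda \to 0$ is unobstructed because $X$ is a genuine smooth projective bundle whose $H_{\mathbb{T}}(X)$ is free over $H(S)[\lambda]$ on the fiber divisors by Leray--Hirsch --- this is exactly where the splitting type enters.

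Next I would apply virtual $\mathbb{T}$-localization to $[\Mbar_{0,n+1}(X, \beta)]^{\mathrm{vir}}$, i.e.\ to the invariants packaged in $J^X$. The $\mathbb{T}$-fixed loci are indexed by decorated graphs whose vertices carry stable maps to $S$-components and whose edges run in the fiber direction between $\mathbb{T}$-fixed sub-bundles; the vertex contributions assemble the $J^S$-data pulled back along $\bar\psi$, the edge contributions are equivariant Euler classes of the relative tangent and normal bundles of the toric fiber, and the marked legs carry the descendent insertions together with the $1/(z(z-\psi))$ of $J^X$. Resumming the graph sum, the fiber part collapses --- via the usual geometric-series identities for the (weighted-projective-type) toric fiber, with weights read off from $\mathscr{O}(-1)\otimes F' \oplus \mathscr{O}$ --- into precisely the relative factor $I^{X/S}_\beta(z, z^{-1})$, and the base part into $\bar\psi^* J^S_{\beta_S}$, reproducing the series $I^X$ of \eqref{HGM}.

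To conclude membership in $\mathcal{L}$ I would invoke Givental's characterization of the (equivariant) Lagrangian cone (cf.\ \cite{jB, CG}): a cohomology-valued $z$-series lies on $\mathcal{L}^X_{\mathbb{T}}$ if and only if (i) the principal parts in $z$ of its restriction to each fixed component, about the poles along the edge weights, are governed by the prescribed recursion in terms of values at neighbouring fixed components --- the edge-breaking relation made manifest by the localization graphs --- and (ii) a polynomiality (regularity in $\lambda$) condition holds, which is forced because the non-equivariant Gromov--Witten theory of $X$ is well defined, so no spurious poles in $\lambda$ survive. Checking (i) reduces to a geometric recursion among the hypergeometric edge factors, and (ii) to the degree bound on the $z$-powers of $I^{X/S}_\beta$ noted before the statement; granting both, $(-z)I^X(\hat t, -z) \in \mathcal{L}^X_{\mathbb{T}}$, and letting $\lambda \to 0$ (the limit exists termwise and $\mathcal{L}$ is closed in the $NE(X)$-adic topology) gives the claim.

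The main obstacle is the bookkeeping in the localization graph sum --- tracking the splitting of $\beta$ into a base class and a fiber class, carrying the equivariant weights on $\mathscr{O}(-1)\otimes F' \oplus \mathscr{O}$ through the edge and vertex factors, and verifying that these recombine into the closed-form hypergeometric product --- together with a clean verification of the recursion and polynomiality conditions (i)--(ii). This is precisely the content of \cite{jB}; for the present paper only the output is needed.
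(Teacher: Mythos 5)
Note first that the paper supplies no proof of this statement: Theorem~\ref{t:Brown} is imported verbatim from Brown's paper \cite{jB} and is used as a black box, so there is no internal argument to compare your sketch against. Your outline is a plausible high-level account of the architecture of Brown's proof. Paragraphs~1, 3 and~4 correctly identify the ingredients: an equivariant lift of the problem using the fiberwise torus action, a characterization of the equivariant Lagrangian cone $\mathcal{L}^X_{\mathbb{T}}$ in terms of the restrictions of a candidate point to $\mathbb{T}$-fixed loci (a recursion among the poles along edge weights together with polynomiality in the equivariant parameter), verification that $I^X$ meets that criterion by direct computation with the hypergeometric formula, and passage to the non-equivariant limit.

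Your paragraph~2, however, contains a genuine conceptual misstep. You assert that localizing $[\Mbar_{0,n+1}(X,\beta)]^{\mathrm{vir}}$ and ``resumming the graph sum'' reproduces the series $I^X$ of \eqref{HGM}. Taken at face value this says $I^X$ \emph{equals} (the localization of) $J^X$, in which case $(-z)I^X(\hat t,-z)\in\mathcal{L}$ would be trivially true. But that equality is false and contradicts the rest of this paper: $I^X$ carries positive powers of $z$ and in general is not $J^X$ evaluated at any parameter; recovering $J^X$ from $I^X$ requires the nontrivial Birkhoff factorization and generalized mirror transformation developed in \S\ref{IJ-BF/GMT}, and Brown's theorem is precisely the weaker statement that $I^X$ lies \emph{on} the cone $\mathcal{L}$, not that it coincides with $J^X$. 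In Brown's actual argument, localization of $J^X$ is used to \emph{derive} the structural cone criterion you invoke in paragraph~3 (his separate ``Theorem~2''), and the hypergeometric $I^X$ is then checked against that criterion; the graph sum is never ``resummed'' into $I^X$. If you drop the claimed resummation and keep only the criterion-and-verification logic of your paragraph~3, the sketch becomes an accurate summary of the route taken in \cite{jB}.
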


\begin{definition} [GMT]
For each $\hat t$, $zI(\hat t)$ lies in $L_\tau$ of $\mathcal{L}$.
The correspondence
$$
\hat t \mapsto \tau(\hat t) \in H(X) \otimes R
$$
is called the \emph{generalized mirror transformation} (c.f.~\cite{CG, aG1}).
\end{definition}

\begin{remark}
In general $\tau(\hat t)$ may be outside the submodule of the Novikov ring $R$
generated by $H(S) \oplus \bigoplus_i \Bbb C D_i$.
This is in contrast to the (classical) mirror transformation where
$\tau$ is a transformation within $(H^0(X) \oplus H^2(X))_R$
(small parameter space).
\end{remark}

To make use of Theorem \ref{t:Brown}, we start by outlining the idea behind the following discussions. By the properties of $\mathcal{L}$, Theorem \ref{t:Brown} implies that $I$ can be obtained from $J$ by applying certain differential operator in $z\p_{\e}$'s to it, with coefficients being series in $z$. However, what we need is the reverse direction, namely to obtain $J$ from $I$, which amounts to removing the positive $z$ powers from $I$.  Note that, the $I$ function has variables only in the subspace $H(S) \oplus \bigoplus_i \mathbb{C} D_i$. Thus a priori the reverse direction does not seem to be possible.

The key idea below is to replace derivatives in the missing directions by higher order differentiations in the fiber divisor variables $t^i$'s, a process similar to transforming a first order ODE system to higher order scaler equation. This is possible since $H(X)$ is generated by $D_i$'s as an algebra over $H(S)$.

\begin{lemma} \label{ST}
$z \p_{1} J^X =J^X$ and $z \p_{1} I^X =I^X$.
\end{lemma}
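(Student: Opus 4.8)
The plan is to deduce both identities directly from the \emph{string equation}: applied on $X$ for $J$, and on the base $S$ for $I$. Recall that $T_1 = 1 \in H(X)$ is the identity, so in the coordinates $\tau = \sum_\mu \tau^\mu T_\mu$ the operator $\p_1$ is $\p/\p\tau^0$, i.e.\ differentiation in the fundamental-class direction. In the fiber-bundle basis $T_{(s,e)} = \bar T_s D^e$ of \S\ref{s:5.2} we have $1 = T_{(0,0)}$ with $\bar T_0 = 1_S$ and $\bar p^*1_S = 1_X$, so on the slice $\hat t = \bar t + D$ on which $I$ is defined the direction $\p_1$ lies inside the slice and coincides with $\p_{\bar t^0}$; in particular $z\p_1 I = I$ makes sense as an identity among functions on that slice.

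For $J = J^X$: this is immediate from the form of \eqref{e:5.1} recorded there. Writing $\tau = \tau^0\cdot 1 + \tau'$ with $\tau'$ carrying no fundamental-class component, the string equation gives $J^X(\tau, z^{-1}) = e^{\tau^0/z}\,\Phi(\tau', z^{-1})$ for a series $\Phi$ independent of $\tau^0$. Differentiating in $\tau^0$ yields $\p_1 J = z^{-1}J$, that is $z\p_1 J = J$.

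For $I = I^X$: in the defining expression \eqref{HGM},
$$
I^X(\bar t, D, z, z^{-1}) = \sum_{\beta \in NE(X)} q^\beta e^{\frac{D}{z} + (D.\beta)} I^{X/S}_\beta(z, z^{-1})\, J^S_{\beta_S}(\bar t, z^{-1}),
$$
the prefactors $q^\beta e^{D/z + (D.\beta)} I^{X/S}_\beta(z, z^{-1})$ depend only on the fiber variables $t^i$ and on $\beta$, and carry no $\bar t$-dependence; all of the $\bar t^0$-dependence of $I^X$ therefore sits inside the pulled-back factors $J^S_{\beta_S}(\bar t, z^{-1})$. The string equation on $S$, applied to each fixed curve class $\beta_S \in NE(S)$, gives $z\p_{\bar t^0} J^S_{\beta_S} = J^S_{\beta_S}$ (this is the $q^{\beta_S}$-component of the string equation $z\p_{\bar t^0}J^S = J^S$, valid degree-by-degree in $NE(S)$, and it is preserved under $\bar p^*$ because $\bar p^*1_S = 1_X$). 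Since $\p_1 = \p_{\bar t^0}$ commutes with the $\bar t$-independent prefactors, term-by-term summation over $\beta$ gives $z\p_1 I = I$.

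There is no serious obstacle here. The only points requiring a word of care are the bookkeeping identification $\p_1 = \p_{\bar t^0}$ via $\bar p^*1_S = 1_X$ in the basis $\{T_{(s,e)}\}$, and the remark that the string equation on $S$ holds degree-by-degree in $NE(S)$, so that it survives the hypergeometric modification \eqref{HGM} whose relative factors $I^{X/S}_\beta$ introduce no new $\bar t$-dependence.
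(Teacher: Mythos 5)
Your proof is correct and follows essentially the same route as the paper's: the first identity is the string equation for $J^X$, and the second follows because the prefactors $q^\beta e^{D/z+(D.\beta)}I^{X/S}_\beta$ in \eqref{HGM} carry no $\bar t^0$-dependence, so $z\p_{\bar t^0}$ passes through to $J^S_{\beta_S}(\bar t)$ where the string equation on $S$ (applied term-by-term in $\beta_S$) gives the result. The paper's proof is just a terser statement of the same observation, so there is nothing to compare beyond the level of detail.
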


\begin{proof}
The first one is the string equation.
For the second one, by definition 
$$
I^X = \sum_{\beta} q^\beta e^{D/z +(D.\beta)}  I^{X/S}_{\beta}J^S_{\beta_S}(\bar t),
$$ 
where
$I^{X/S}_{\beta}$ depends only on $z$. The differentiation with respect to $t^0$ (dual coordinate of $1$) only applies to $J^S_{\beta_S}(\bar t)$.
Hence the string equation on $J^S_{\beta_S}(\bar t)$ concludes the proof.
\end{proof}

To avoid cluttered notations, we use $I$ and $J$ to denote $I$-function and $J$-function respectively when the target space is clear.

\begin{proposition} \label{BF0}
{\rm (1)} The GMT: $\tau =\tau(\hat t)$ satisfies $\tau(\hat t, q = 0) = \hat t$.

{\rm (2)} Under the basis $\{T_{\e}\}_{\e \in \Lambda^+}$, there exists an \emph{invertible} $N \times N$ matrix-valued formal series $B(\tau, z)$, which is free from cohomology classes, such that
\begin{equation} \label{e:5.2}
 \left( \partial^{z\e} I(\hat t, z, z^{-1}) \right)
 =  \left( z \nabla J(\tau, z^{-1}) \right) B(\tau, z),
\end{equation}
where ${\displaystyle \left( \partial^{z\e} I \right) }$
is the $N \times N$ matrix with $\partial^{z\e} I$ as column vectors.
\end{proposition}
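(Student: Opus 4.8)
The plan is to construct $B$ inductively: bootstrap from Brown's Theorem~\ref{t:Brown}, then propagate the relation along the base/fiber directions using the quantum differential equations (QDE) on $X$. Part~(1) is quick. Setting $q=0$ in \eqref{HGM} leaves only $\beta=0$, and since $I^{X/S}_0\equiv 1$ (the hypergeometric factor being normalized so the $\beta=0$ term is trivial) while the $\beta_S=0$ term of $J^S$ is $J^S_0(\bar t)=e^{\bar t/z}$, we get $I^X(\hat t,z)|_{q=0}=e^{D/z}\cup e^{\bar t/z}=e^{\hat t/z}$, the cup-product exponential in $H(X)$. By the same computation the $\beta=0$ part of $J^X(\tau)$ is $e^{\tau/z}$, and $J^X(\tau)=z\p_1 J^X(\tau)\in L_\tau$ by Lemma~\ref{ST} and item~(v). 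Hence at $q=0$ the point $I^X(\hat t)$ coincides with $J^X(\hat t)$, which lies on the ruling $zL_{\hat t}$; since by item~(iv) that ruling meets $-1z+z\mathcal H_-$ in the single point parametrized by $\tau=\hat t$, the GMT satisfies $\tau(\hat t,q=0)=\hat t$.

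For Part~(2) I would first observe that Brown's theorem, together with the ruled-cone structure (items (ii)--(iii)), places $zI(\hat t)$ not merely in $L_{\tau(\hat t)}$ but on the ruling $zL_{\tau(\hat t)}$; so the $\mathcal L$-valued curve $\hat t\mapsto zI(\hat t)$ runs through points with tangent space exactly $L_{\tau(\hat t)}$, whence every first derivative $z\p_{\bar t^s}I$, $z\p_{t^i}I$ again lies in $L_{\tau(\hat t)}$. Then I induct on the number of fiber derivatives. By Lemma~\ref{l:1} write the base case $\p^{z(s,0)}I=z\p_{\bar t^s}I=\sum_\mu(z\p_\mu J)(\tau)\,B^\mu_{(s,0)}$ with $B^\mu_{(s,0)}\in R\{z\}$ scalar (for $s=0$ this is just $I$, by Lemma~\ref{ST}). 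Given $\p^{z\e'}I=\sum_\mu(z\p_\mu J)(\tau)\,B^\mu_{\e'}$ with $B^\mu_{\e'}\in R\{z\}$ scalar, apply $z\p_{t^i}$: by the chain rule $z\p_{t^i}\bigl[(z\p_\mu J)(\tau(\hat t))\bigr]=\sum_\nu(\p\tau^\nu/\p t^i)(z\p_\nu z\p_\mu J)(\tau)$, and the QDE $z\p_\nu z\p_\mu J=\sum_\kappa\tilde C^\kappa_{\nu\mu}(\tau)\,z\p_\kappa J$ gives
\[
\p^{z\e}I=\sum_\kappa(z\p_\kappa J)(\tau)\Bigl[\sum_{\mu,\nu}\frac{\p\tau^\nu}{\p t^i}\,\tilde C^\kappa_{\nu\mu}(\tau)\,B^\mu_{\e'}+z\p_{t^i}B^\kappa_{\e'}\Bigr],
\]
again of the required shape, with the new coefficients scalar and in $R\{z\}$ (Part~(1) guarantees $\tau$ is an honest formal deformation of $\hat t$, so $\p\tau^\nu/\p t^i$ and $\tilde C^\kappa_{\nu\mu}(\tau)$ make sense). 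Running this over $\e\in\Lambda^+$ assembles $B=(B^\mu_\e)$, which is free of cohomology classes and is the unique such matrix since the images of $z\p_\mu J(\tau)$ in $L_\tau/zL_\tau$ form a basis (item~(v)).

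Finally, for invertibility, I would evaluate \eqref{e:5.2} at $q=0$. Using $\tau|_{q=0}=\hat t$ and $J^X(\tau)|_{q=0}=e^{\tau/z}$, the $\mu$-th column of $z\nabla J|_{q=0}$ is the coordinate vector of $T_\mu\cup e^{\hat t/z}$; and since $z\p_{t^i}$ (resp.\ $z\p_{\bar t^s}$) acts on $e^{\hat t/z}$ by $\cup D_i$ (resp.\ $\cup\bar T_s$), one gets $\p^{z\e}I|_{q=0}=(\bar T_s D^e)\cup e^{\hat t/z}=T_\e\cup e^{\hat t/z}$. So in the basis $\{T_\e\}$ both $z\nabla J|_{q=0}$ and $(\p^{z\e}I)|_{q=0}$ equal the matrix of multiplication by the unit $e^{\hat t/z}$, forcing $B|_{q=0}=\mathrm{Id}$; thus $B=\mathrm{Id}+O(q)$ is invertible over the completed Novikov ring. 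The step I expect to be the main obstacle is the inductive propagation: one must check that the chain-rule/QDE step leaks neither negative powers of $z$ nor cohomology-valued entries into $B$. This is where the bookkeeping is delicate — the $|e|+1$ factors of $z$ packaged in $\p^{z\e}=z^{|e|+1}\p^{(s,e)}$ against the single $z$ per column of $z\nabla J$ — and where one genuinely needs Brown's theorem in the strong form ``$zI(\hat t)$ lies on a ruling of $\mathcal L$'', rather than only ``$zI(\hat t)\in L_{\tau(\hat t)}$'', so that iterated $\hat t$-derivatives stay inside $L_\tau$.
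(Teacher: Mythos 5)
Your argument is correct, but it takes a genuinely different route than the paper's in both parts, and it is worth noting where the two diverge.

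For part~(1) you argue geometrically: at $q=0$ both $zI(\hat t)$ and $zJ(\hat t)$ reduce to $ze^{\hat t/z}$, and then items (ii)--(iv) pin down the parameter $\tau$ for the tangent space through that point. The paper instead proves the factorization \eqref{e:5.2} \emph{first}, observes that $B$ is cohomology-free, and then extracts $\tau\equiv\hat t$ algebraically from $e^{(\hat t-\tau)/z}\equiv\sum_\e B_{\e,1}(z)T_\e$: the left side is a full exponential in $1/z$ with cohomology coefficients, the right side is polynomial in $z$, so $\hat t-\tau$ must vanish. Your route is self-contained and arguably more geometric; the paper's packs (1) as an easy corollary of the structure of $B$ in (2). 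Both are fine, but you should be slightly careful that your appeal to (iv) implicitly uses injectivity of $\tau\mapsto L_\tau$ over the Novikov ring (which indeed holds, since $L_\tau\cap(-1z+z\mathcal H_-)$ recovers $\tau$).

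For part~(2) the divergence is more substantive. The paper's iteration never touches the QDE: from $zI\in\mathcal L$ and item~(i) one gets $z\p I\in L$, then $z(z\p I)\in zL\subset\mathcal L$ with the \emph{same} tangent $L$ by item~(iii), so $z\p(z\p I)\in L$, and inductively every $\p^{z\e}I\in L_{\tau(\hat t)}$; Lemma~\ref{l:1} then hands you the matrix $B$ with entries in $R\{z\}$. You instead re-derive the $L_\tau$-membership by chain rule plus the Dubrovin QDE $z\p_\nu z\p_\mu J=\sum_\kappa\tilde C^\kappa_{\nu\mu}z\p_\kappa J$, producing an explicit recursion $B^\kappa_\e=\sum_{\mu,\nu}\tfrac{\p\tau^\nu}{\p t^i}\tilde C^\kappa_{\nu\mu}(\tau)B^\mu_{\e'}+z\p_{t^i}B^\kappa_{\e'}$. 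This is correct and even constructive (and resembles the later gauge-transformation discussion \eqref{tilde-Ca}), but it is heavier than necessary here: the purely cone-theoretic argument settles the membership question without invoking $\tilde C$ or $\p\tau/\p t^i$ at all, which is exactly why the paper's bookkeeping worry you flag at the end never arises. In fact your stated concern about ``leaking negative powers of $z$'' is moot even in your own recursion, since the only new $z$-factor comes from $z\p_{t^i}B^\kappa_{\e'}$ and only raises the $z$-degree, and $\tilde C^\kappa_{\nu\mu}(\tau)$, $\p\tau^\nu/\p t^i$ are $z$-free scalars. Your $q=0$ argument for invertibility matches the paper's.

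One small caveat: the uniqueness of $B$ that you claim in passing is true (modulo $q$ one reduces to $\sum f_\mu T_\mu e^{\tau/z}=0\Rightarrow f_\mu=0$, then induct on $q$-degree) but is not asserted in Proposition~\ref{BF0}; uniqueness is only formulated later at the level of the operator $P(z)$ in Theorem~\ref{BF/GMT}.
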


\begin{proof}
By Theorem~\ref{t:Brown},
$zI \in \mathcal{L}$, hence $z \p I \in T\mathcal{L} = L$. Then $z(z\p) I \in zL \subset \mathcal{L}$ and so $z\p (z\p) I$ lies again in $L$. Inductively, $\p^{z\e} I$ lies in $L$.
The factorization $(\p^{z\e}I) = (z\nabla J) B(z)$ then follows from Lemma~\ref{l:1}. Also Lemma \ref{ST} says that the $I$ (resp.~$J$) function appears as the first column vector of $(\p^{z\e}I)$ (resp.~$(z\nabla J)$). By the $R\{z\}$ module structure it is clear that $B$ does not involve any cohomology classes.

By the definitions of $J$, $I$ and $\p^{z\e}$ (c.f.~(\ref{e:5.1}),
(\ref{HGM}), (\ref{zp})), it is clear that
\begin{equation} \label{modq}
\p^{z\e} e^{\hat t/{z}} = T_{\e} e^{\hat t/z},\qquad z\p_{\e}
e^{{t}/{z}} = T_{\e} e^{t/z}
\end{equation}
($t\in H(X)$).
Hence modulo Novikov variables $\p^{z\e} I(\hat t) \equiv T_{\e} e^{\hat t/z}$ and $z{\partial}_{\e} J(\tau) \equiv T_{\e} e^{\tau/z}$

To prove (1), modulo all $q^\beta$'s we have
$$
e^{\hat t/z} \equiv \sum_{\e \in \Lambda^+} B_{\e, 1}(z) T_{\e} e^{\tau(\hat t)/z}.
$$
Thus
$$e^{(\hat t - \tau(\hat t))/z} \equiv \sum_{\e} B_{\e, 1}(z) T_{\e},$$
which forces that $\tau(\hat t) \equiv \hat t$
(and $B_{\e, 1}(z) \equiv \delta_{T_\e, 1}$).

To prove (2), notice that by (1) and (\ref{modq}), $B(\tau, z) \equiv I_{N \times N}$ when modulo Novikov variables, so in particular $B$ is invertible.  Notice that in getting (\ref{e:5.2}) we do not need to worry about the sign on ``$-z$'' since it appears in both $I$ and $J$.
\end{proof}

\begin{definition}[BF]
The left-hand side of \eqref{e:5.2} involves $z$ and $z^{-1}$,
while the right-hand side is the product of a function of $z$ and a function of
$z^{-1}$.
Such a matrix factorization process is termed the \emph{Birkhoff factorization}.
\end{definition}

Besides its existence and uniqueness, for actual computations it will be important to know how to calculate $\tau(\hat t)$ directly or inductively.

\begin{proposition} \label{p:5.5}
There are scalar-valued formal series $C_{\e}(\hat t, z)$
such that
\begin{equation} \label{e:5.3}
  J({\tau}, z^{-1}) =   \sum_{\e \in \Lambda^+} C_{\e}(\hat t, z) \,
  {\partial}^{z \e} I(\hat t, z, z^{-1}),
\end{equation}
where $C_{\e} \equiv \delta_{T_\e, 1}$ modulo Novikov variables.

In particular $\tau(\hat t) = \hat t + \cdots$ is determined by the $1/z$ coefficients of the RHS.
\end{proposition}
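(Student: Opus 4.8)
The strategy is to invert the Birkhoff factorization of Proposition~\ref{BF0} in a way that keeps the coefficients scalar-valued. By \eqref{e:5.2} we have $(\p^{z\e}I) = (z\nabla J)\,B(\tau,z)$ with $B$ invertible over the Novikov ring (as it reduces to the identity matrix modulo Novikov variables). Writing $A(\tau,z) := B(\tau,z)^{-1}$, which again is a well-defined matrix of formal series in $z$ with $A \equiv I_{N\times N}$ modulo Novikov variables, we obtain $(z\nabla J) = (\p^{z\e}I)\,A(\tau,z)$. The $J$ function itself is the first column of the matrix $(z\nabla J)$ by the string equation (Lemma~\ref{ST} together with the fact, recorded in the proof of Proposition~\ref{BF0}, that $z\p_1 J = J$ is the first column of $z\nabla J$). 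Therefore $J(\tau,z^{-1}) = \sum_{\e} (\p^{z\e}I)\, A_{\e,1}(\tau,z)$, and we set $C_\e(\hat t,z) := A_{\e,1}(\tau(\hat t),z)$. Since $A$ is free of cohomology classes and $A \equiv I_{N\times N}$ modulo Novikov variables, $C_\e \equiv \delta_{T_\e,1}$ modulo Novikov variables, as claimed.

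For the last sentence of the statement, one reads off the $z^{-1}$-coefficient of both sides of \eqref{e:5.3}. On the left, $J(\tau,z^{-1}) = 1 + \tau/z + O(z^{-2})$, so the $z^{-1}$-coefficient is exactly $\tau(\hat t)$. On the right, using $\p^{z\e}I \equiv T_\e e^{\hat t/z}$ modulo Novikov variables from \eqref{modq}, and $C_\e \equiv \delta_{T_\e,1}$ modulo Novikov variables, the leading term of the right-hand side is $e^{\hat t/z}$, whose $z^{-1}$-coefficient is $\hat t$; hence $\tau(\hat t) = \hat t + \cdots$ where the corrections are series in the Novikov variables $q^\beta$ with $\beta \ne 0$. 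More precisely, expanding $C_\e = \sum_\beta q^\beta C_{\e,\beta}(z)$ and $\p^{z\e}I = \sum_\beta q^\beta (\p^{z\e}I)_\beta(z,z^{-1})$ and collecting the $z^{-1}$-coefficient order by order in $NE(X)$ expresses $\tau_\beta(\hat t)$ recursively in terms of lower-order data; I would indicate this recursion but not carry it out in full.

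The one point requiring care — the main (though mild) obstacle — is the well-definedness and convergence of $A = B^{-1}$ and of the composite $C_\e(\hat t,z) = A_{\e,1}(\tau(\hat t),z)$ in the appropriate completed topology. Here $B(\tau,z)$ is a matrix of formal power series in $z$ whose entries lie in the Novikov ring $R$ completed along $NE(X)$, and $B \equiv I_{N\times N}$ modulo the augmentation ideal; hence $B^{-1} = \sum_{k\ge 0}(I_{N\times N}-B)^k$ converges in the $NE(X)$-adic topology. The substitution $\tau = \tau(\hat t) = \hat t + \sum_{\beta\ne 0} q^\beta \tau_\beta(\hat t)$ is likewise continuous for this topology because $\tau(\hat t)-\hat t$ lies in the augmentation ideal, so reparametrizing the formal-series entries of $A$ by $\tau(\hat t)$ is legitimate and preserves the property of being $\equiv \delta_{\e,1}$ modulo Novikov variables. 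All the subtleties of completion are, as stated in the paper, deferred to \cite{LP2}, so at this level the argument is the purely formal inversion just described.
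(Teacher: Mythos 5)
Your proposal is correct and takes essentially the same route as the paper: invert the Birkhoff factorization matrix $B$ from Proposition~\ref{BF0}, extract the first column of $z\nabla J = (\p^{z\e}I)\,B^{-1}$ using $z\p_1 J = J$ (Lemma~\ref{ST}), and read off $C_\e$ as the $\e$-th entry of that column, with $C_\e \equiv \delta_{T_\e,1}$ following from $B^{-1}\equiv I_{N\times N}$ modulo Novikov variables. The extra remarks on adic convergence of $B^{-1}$ and on reparametrization by $\tau(\hat t)$ are sound but not needed beyond what the paper already defers to \cite{LP2}.
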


\begin{proof}
Proposition \ref{BF0} implies that
\[
 z \nabla J = \left( \partial^{z\e} I \right) B^{-1}.
\]
Take the first column vector in the LHS, which is $z \nabla_1 J = J$ by Lemma \ref{ST},
one gets expression (\ref{e:5.3}) by defining $C_\e$ to be the corresponding $\e$-th entry of the first column vector of $B^{-1}$. Modulo $q^\beta$'s, $B^{-1} \equiv I_{N \times N}$, hence  $C_\e \equiv \delta_{T_\e, 1}$.
\end{proof}

\begin{definition}
A differential operator $P$ is of degree $\Lambda^+$ if $P = \sum_{\e \in \Lambda^+} C_\e \p^{z\e}$ for some $C_\e$. Namely, its components are multi-derivatives indexed by $\Lambda^+$.
\end{definition}

\begin{theorem} [(BF/GMT)] \label{BF/GMT}
There is a unique, recursively determined, scalar-valued degree $\Lambda^+$ differential operator
$$
P(z) = 1 + \sum_{\beta \in NE(X) \backslash \{0\}} q^\beta P_\beta(t^i, \bar t^s, z; z\p_{t^i}, z \p_{\bar t^s}),
$$
with each $P_\beta$ being polynomial in $z$, such that $P(z) I(\hat t, z, z^{-1}) = 1 + O(1/z)$.

Moreover,
$$
J(\tau(\hat t), z^{-1})  = P(z) I(\hat t, z, z^{-1}),
$$
with $\tau(\hat t)$ being determined by the $1/z$ coefficient of the right-hand side.
\end{theorem}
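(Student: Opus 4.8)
The plan is to read off $P$ from the Birkhoff factorization already established in Propositions~\ref{BF0} and~\ref{p:5.5}, and then to check, by an induction in the Novikov filtration, that it has the asserted polynomial-in-$z$ shape and is the only such operator. First I would set $P(z) := \sum_{\e \in \Lambda^+} C_\e(\hat t, z)\,\p^{z\e}$, where the scalar series $C_\e$ are the entries of the first column of the matrix $B(\tau, z)^{-1}$ produced in Proposition~\ref{p:5.5}; by that proposition $P(z) I = J(\tau(\hat t), z^{-1})$ exactly. Writing $C_\e = \delta_{T_\e, 1} + \sum_{\beta \neq 0} q^\beta C_{\e, \beta}(\hat t, z)$ and using the string equation $z\p_1 I = I$ of Lemma~\ref{ST} to turn the $\e = 1$ summand into plain multiplication, this presents $P$ in the normalized form $1 + \sum_{\beta \neq 0} q^\beta P_\beta$ with $P_\beta = C_{1, \beta} + \sum_{\e \neq 1} C_{\e, \beta}\,\p^{z\e}$. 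Since $J(\tau, z^{-1}) = 1 + \tau/z + O(1/z^2)$, this already yields $P(z) I = 1 + O(1/z)$, with $\tau(\hat t)$ equal to the coefficient of $z^{-1}$ in $P(z) I$. So existence and the reconstruction formula for $J$ amount to a repackaging of Proposition~\ref{p:5.5}.

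The one substantive point about existence is that each $P_\beta$ is \emph{polynomial} in $z$, and I would deduce this from Lemma~\ref{l:1}. Because the columns $z\p_\mu J$ generate the tangent space $L_\tau$ as a module over $R\{z\} = \{\sum q^\beta a_\beta(z) : a_\beta \in \mathbb{C}[z]\}$, and $\p^{z\e} I \in L_\tau$ as shown in the proof of Proposition~\ref{BF0}, the factorization $(\p^{z\e} I) = (z\nabla J) B$ forces $B$ to have entries in $R\{z\}$; that is, $B = I + \sum_{\beta \neq 0} q^\beta B_\beta(z)$ with each $B_\beta(z)$ a polynomial in $z$ (its degree controlled, via the a priori bound recalled in the text, by the power of $z$ in $I^{X/S}$ at degree $\beta$, since $z\nabla J$ contributes only non-positive powers of $z$). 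Inverting $B$ in the Novikov-adic topology writes each Novikov coefficient of $B^{-1}$ as a finite sum of products of the $B_{\beta'}(z)$, hence again polynomial in $z$; so the $C_{\e, \beta}$, and with them the $P_\beta$, are polynomial in $z$.

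For uniqueness and the recursion I would induct on $\beta$ in the partial order on $NE(X)$, the base case being the normalization $P_0 = 1$. Given a second degree-$\Lambda^+$ operator $P'(z) = 1 + \sum_{\beta \neq 0} q^\beta P'_\beta$ with each $P'_\beta$ polynomial in $z$ and $P'(z) I = 1 + O(1/z)$, set $\Delta_{\e, \beta} := C_{\e, \beta} - C'_{\e, \beta}$. Expanding $I = e^{\hat t/z} + \sum_{\beta \neq 0} q^\beta I_\beta$ and using $\p^{z\e} e^{\hat t/z} = T_\e e^{\hat t/z}$ from~\eqref{modq} together with $z\p_1 I = I$, the coefficient of $q^\beta$ in $(P - P') I$ equals $\left(\sum_{\e \in \Lambda^+} \Delta_{\e, \beta}(\hat t, z)\,T_\e\right) e^{\hat t/z}$ plus terms involving only the $\Delta_{\e, \beta'}$ with $\beta - \beta' \in NE(X) \backslash \{0\}$, and those vanish by induction. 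Since $(P - P') I = O(1/z)$, the surviving term has only strictly negative powers of $z$; but $e^{\hat t/z} = 1 + O(1/z)$ and the $\Delta_{\e, \beta}$ are polynomials in $z$, so were they not all zero the coefficient of the top power $z^d$ (with $d := \max_\e \deg_z \Delta_{\e, \beta} \geq 0$) would be $\sum_\e \big([z^d]\Delta_{\e, \beta}\big) T_\e \neq 0$, contradicting linear independence of the $T_\e$. Hence $P_\beta = P'_\beta$. Solving the same identity for the unknowns $C_{\e, \beta}$ exhibits the recursion: with $R_\beta(z) := \sum_{\beta'' \neq 0,\ \beta' + \beta'' = \beta} P_{\beta'}(I_{\beta''})$, which is known since each such $\beta'$ satisfies $\beta - \beta' \in NE(X) \backslash \{0\}$, one takes $\sum_\e C_{\e, \beta}(\hat t, z)\,T_\e$ to be the non-negative-$z$-power part of $-R_\beta(z)\,e^{-\hat t/z}$ — a finite, $z$-polynomial, $H(X)$-valued expression, since every power of $z$ occurring is bounded above — and then reads off the $C_{\e, \beta}$ by expanding in the $\mathbb{C}$-basis $\{T_\e\}_{\e \in \Lambda^+}$ of $H(X)$.

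I expect the main obstacle to be the polynomiality-in-$z$ step: it is what turns the formal loop-group (Birkhoff) factorization of Proposition~\ref{BF0} into an honest polynomial differential operator, and it relies on the $R\{z\}$-module structure of Lemma~\ref{l:1}; the rest is bookkeeping in the Novikov filtration. A secondary point to watch is that the ``remove the non-negative powers of $z$'' step in the recursion stays within degree-$\Lambda^+$ operators, which is precisely where the standing hypothesis that $H(X)$ is generated over $H(S)$ by the fiber divisors — so that $\{T_\e\}_{\e \in \Lambda^+}$ is a $\mathbb{C}$-basis of $H(X)$ — gets used.
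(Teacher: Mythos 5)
Your proposal is correct, and it uses the same essential ingredients as the paper — Lemma~\ref{l:1} (the $R\{z\}$-module structure of $L_\tau$), Proposition~\ref{BF0}/\ref{p:5.5}, the string equation, and linear independence of $\{T_\e\}$ — but in reversed logical order. The paper \emph{first} constructs $P$ by an explicit iterative peeling of top $z$-powers (removing $A_1, A_2, \ldots$ one at a time and defining $q^\beta P_\beta = -\sum_j \hat A_j$), \emph{then} proves uniqueness, and only at the very end invokes $B^{-1} \in R\{z\}$ to identify this constructed $P$ with the operator coming from Birkhoff factorization, thereby obtaining $P I = J$. You start from the BF operator (so $P I = J$ is automatic), establish polynomiality of the $P_\beta$ directly from the $R\{z\}$ structure, prove uniqueness by the same linear-independence-of-$T_\e$ argument, and then recover ``recursively determined'' via the closed-form step $\sum_\e C_{\e,\beta} T_\e = -\bigl[z^{\ge 0}\bigr]\!\bigl(R_\beta(z) e^{-\hat t/z}\bigr)$. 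This one-shot formula is a compact repackaging of the paper's iterative peeling — the two produce the same answer by uniqueness — and has the advantage of making the $z$-polynomiality and well-definedness manifest in a single line. What the paper's version buys is a fully explicit algorithm (peel off the current top $z$-power, requantize, repeat, terminating in at most $k_1$ steps) that is the form actually used in the computations of Section~\ref{MSJ-ex} and the Appendix; your version is leaner for the existence/uniqueness proof but leaves the peeling implicit. Both are complete and correct proofs of the statement.
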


\begin{proof}
The operator $P(z)$ is constructed by induction on
$\beta \in NE(X)$. We set $P_\beta = 1$ for $\beta = 0$. Suppose that $P_{\beta'}$ has been constructed for all $\beta' < \beta$ in $NE(X)$. We set $P_{< \beta}(z) = \sum_{\beta' < \beta} q^{\beta'} P_{\beta'}$. Let
\begin{equation} \label{top-z}
A_1 = z^{k_1} q^\beta \sum_{\e  \in \Lambda^+} f^{\e}(t^i, \bar t^s) T_{\e}
\end{equation}
be the top $z$-power term in $P_{< \beta}(z) I$. If $k_1 < 0$ then we are done. Otherwise  we will remove it by introducing ``certain $P_\beta$''.
Consider the ``naive quantization''
\begin{equation} \label{naive-q}
\hat A_1  := z^{k_1} q^\beta \sum_{\e  \in \Lambda^+} f^{\e}(t^i, \bar t^s) \p^{z\e}.
\end{equation}
In the expression
$$
(P_{< \beta}(z) - \hat A_1) I = P_{< \beta}(z) I - \hat A_1 I,
$$
the target term $A_1$ is removed since
$$
\hat A_1 I(q = 0) = \hat A_1 e^{\hat t/z} = A_1 e^{\hat t/z} = A_1 + A_1 O(1/z).
$$
All the newly created terms either have smaller $z$-power or have curve degree $q^{\beta''}$ with $\beta'' > \beta$ in $NE(X)$. Thus we may keep on removing the new top $z$-power term $A_2$, which has $k_2 < k_1$. Since the process will stop in no more than $k_1$ steps, we simply define $P_\beta$ by
$$
q^\beta P_\beta = -\sum_{1 \le j \le k_1} \hat A_j.
$$
By induction we get $P(z) = \sum_{\beta \in NE(X)} q^\beta P_\beta$, which is clearly of degree $\Lambda^+$.

Now we prove the uniqueness of $P(z)$. Suppose that $P_1(z)$ and $P_2(z)$ are two such operators. The difference $\delta(z) = P_1(z) - P_2(z)$ satisfies
$$
\delta(z) I =: \sum_{\beta} q^\beta \delta_\beta I = O(1/z).
$$
Clearly $\delta_0 = 0$. If $\delta_\beta \ne 0$ for some $\beta$, then $\beta$ can be chosen so that $\delta_{\beta'} = 0$ for all $\beta' < \beta$. Let the highest non-zero $z$-power term of $\delta_\beta$ be $z^k \sum_\e \delta_{\beta, k, \e} \p^{z\e}$. Then
$$
q^\beta z^k \sum_\e \delta_{\beta, k, \e} \p^{z\e} \Big(e^{\hat t/z} + \sum_{\beta_1 \ne 0} q^{\beta_1} I_{\beta_1}\Big) + R I = O(1/z).
$$
Here $R$ denotes the remaining terms in $\delta$. Note that terms in $RI$ either do not contribute to $q^\beta$ or have $z$-power smaller than $k$. Thus the only $q^\beta$ term is
$$
q^\beta z^k \sum_\e \delta_{\beta, k, \e} T_\e e^{\hat t/z}.
$$
This is impossible since $k \ge 0$ and $\{T_\e\}$ is a basis. Thus $\delta = 0$.

Finally, by Lemma \ref{l:1} $B$, and so does $B^{-1}$, has entries in $R\{z\}$. Thus Proposition \ref{p:5.5} provides an operator which satisfies the required properties. By the uniqueness it must coincide with the effectively constructed $P(z)$.
\end{proof}

\subsection{Reduction to special BF/GMT}

\begin{proposition} \label{red-GMT}
Let $f: X \dasharrow X'$ be the projective local model of an ordinary flop with graph correspondence $\T$. Suppose there are formal liftings $\tau$, $\tau'$ of $\hat t$ in $H(X) \otimes R$ and $H(X') \otimes R$ respectively, with $\tau(\hat t), \tau'(\hat t) \equiv \hat t$ when modulo Novikov variables in $NE(S)$, and with $\T \tau(
\hat t) \cong \tau'(\hat t)$. Then
$$
\T J(\tau(\hat t)).\xi \cong J'(\tau'(\hat t)).\xi' \Longrightarrow \T J(\hat t).\xi \cong J'(\hat t).\xi'
$$
and consequently $QH(X)$ and $QH(X')$ are analytic continuations to each other under $\T$.
\end{proposition}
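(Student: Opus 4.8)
The plan is to reduce the statement $\T J(\hat t).\xi \cong J'(\hat t).\xi'$ to the hypothesis by exploiting the fact that the generalized mirror transformation $\tau(\hat t)$ only changes coordinates along the fiber of the cone, together with the fact that $\xi$ and $\xi'$ are identified under $\T$ (being the common infinity divisor). First I would recall, from Lemma \ref{l:1} and the structure of the Lagrangian cone, that the $J$-function at $\tau(\hat t)$ and at $\hat t$ both lie on $\mathcal{L}$, and that $\tau(\hat t) = \hat t + \sum_{\beta\neq 0} q^\beta \tau_\beta(\hat t)$ is a Novikov-variable correction. In particular $J(\tau(\hat t))$ and $J(\hat t)$ generate the \emph{same} tangent spaces up to the change of ruling, so there is an $R\{z\}$-valued (in fact upper-triangular in the Novikov filtration) transition that relates the two. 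The key point is that this transition, and the change of variables $\tau = \tau(\hat t)$, is governed entirely by data intrinsic to $X$ (and to $X'$), and the hypothesis $\T\tau(\hat t)\cong\tau'(\hat t)$ says precisely that these intrinsic data match under analytic continuation.

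The core of the argument is then a functoriality/commutativity diagram. I would introduce the map $\hat t \mapsto \tau(\hat t)$ and its inverse (which exists as a formal series since $\tau(\hat t)\equiv\hat t$ mod Novikov), and write $J(\hat t)$ as $J$ evaluated at the point $\tau^{-1}$ applied to $\hat t$; equivalently, since everything is a formal germ, express $J(\hat t)$ in terms of $J(\tau(\hat u))$ where $\hat u = \tau^{-1}(\hat t)$. Because the analytic continuation $\T$ commutes with the Novikov-adic reconstruction — each $q^\beta$-coefficient on the $X$ side maps to the corresponding $q^{\T\beta}$-coefficient on the $X'$ side — and because $\T\tau(\hat t)\cong\tau'(\hat t)$, one checks degree-by-degree in $NE(S)$ that $\T J(\hat t).\xi$ agrees with $J'(\hat t).\xi'$ provided $\T J(\tau(\hat t)).\xi \cong J'(\tau'(\hat t)).\xi'$. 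The contraction with $\xi$ (resp.\ $\xi'$) is what makes the statement well-posed: on the full $H(X)$ the correspondence $\T$ does not respect the cup product, but the type I quasi-linearity is a statement about the single component paired with the infinity divisor, and $\T\xi = \xi'$, so pairing is preserved. Finally, the last clause — that $QH(X)$ and $QH(X')$ are analytic continuations of each other — follows by combining this with the reduction in Part I, which shows that the invariance of the big quantum ring for the local models is equivalent to the type I quasi-linearity property $\T\langle\bar t_1,\dots,\bar t_{n-1},\tau_k a\xi\rangle \cong \langle\cdots\rangle'$, itself encoded in $\T J(\hat t).\xi\cong J'(\hat t).\xi'$.

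The main obstacle I anticipate is controlling the interplay between the change of variables $\tau(\hat t)$ and the analytic continuation, since $\tau(\hat t)$ need not lie in the small parameter subspace $H(S)\oplus\bigoplus_i\mathbb{C}D_i$ (as the Remark after the GMT definition warns). Thus one cannot simply substitute $\hat t \leadsto \tau(\hat t)$ inside an identity that was only asserted on that subspace; one must either (a) first extend the desired identity $\T J.\xi\cong J'.\xi'$ to the relevant larger parameter space by a reconstruction argument, or (b) argue that the extra directions in $\tau(\hat t)$ are themselves controlled by $\T$-compatible data. Approach (a) seems cleanest: use the fact (from the Lagrangian cone formalism, item (v) and Lemma \ref{l:1}) that all of $QH(X)$ is reconstructed from the derivatives $z\p_\mu J$ along $\mathcal{L}$, so an identity valid after pairing with $\xi$ at all points of the form $\tau(\hat t)$ propagates — via the quantum product and the string/divisor equations — to an identity at the points $\hat t$. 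I would also need to check that the $\xi$-pairing is compatible with the reconstruction, i.e.\ that $\xi *_\tau(-)$ and $\xi' *_{\tau'}(-)$ correspond under $\T$ modulo the analytic continuation, which is where the hypothesis $\T\tau\cong\tau'$ (not merely at $q=0$) is essential.
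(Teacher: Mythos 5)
Your overall orientation is reasonable---you correctly identify that the content of the proposition is to convert the identity at the mirror point $\tau(\hat t)$ into one at $\hat t$, that the contraction with $\xi$ is what makes the statement well-posed under $\T$, that the hypothesis $\T\tau\cong\tau'$ is essential, and that the final clause follows from the reconstruction machinery of Part~I. You also correctly flag the central difficulty: $\tau(\hat t)$ escapes the small parameter subspace $H(S)\oplus\bigoplus_i\mathbb{C}D_i$, so one cannot simply substitute. However, your proposed resolution has a genuine gap, and the route you sketch is not the one the paper takes.

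The gap is in the ``inversion'' step. You propose to set $\hat u = \tau^{-1}(\hat t)$ and express $J(\hat t)$ as $J(\tau(\hat u))$. But $\tau$ maps the small parameter space $H(S)\oplus\mathbb{C}h\oplus\mathbb{C}\xi$ into the much larger $H(X)\otimes R$; a formal inverse $\tau^{-1}$, if one writes it down by inverting the series, would send $\hat t$ to an element of $H(X)\otimes R$ that is generally \emph{not} in the small parameter space. Thus $J(\tau(\hat u))$ cannot be interpreted by the hypothesis, which only asserts the $\T$-invariance at points of the form $\tau(\hat t)$ for $\hat t$ small. Your fallback (a)---propagate the $\xi$-paired identity across parameter space via the cone and reconstruction---is stated at too high a level to constitute a proof: you would need to make precise which correlators are being controlled and in which order, and that is exactly where the work lies.

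The paper's proof avoids inversion entirely and instead performs an induction on the weight $w=(\beta_S,d_2)\in W$ of the quotient Mori cone. From the hypothesis one reads off, at each weight $w$, the $\T$-invariance of $\langle\tau(\hat t)^n,\psi^k\xi a\rangle_w$. One then expands $\tau(\hat t)=\sum_{\bar w}\tau_{\bar w}(\hat t)q^{\bar w}$ with $\tau_0=\hat t$. Every term with some $\bar w_j\ne 0$ contributes a product $g_i(\hat t)h_i$ where $g_i$ comes from the $\tau_{\bar w_j}$'s (controlled by $\T\tau\cong\tau'$) and $h_i$ is a fiber series of weight $w''<w$ (controlled by the inductive hypothesis). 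Subtracting those, the surviving piece is precisely $\T\langle\hat t^n,\psi^k\xi a\rangle_w\cong\langle\hat t^n,\psi^k\xi\T a\rangle_{w'}$, for all $n,k,a$; Part~I Theorem~4.2 (divisorial reconstruction and WDVV) then promotes this to the invariance of all fiber series over $w$, closing the induction. This is the ``degree-by-degree'' bookkeeping you gesture at in your middle paragraph, but realized as a subtraction argument inside a single correlator expansion rather than as a functoriality diagram with an inverse transformation. If you want to repair your proposal, replace the $\tau^{-1}$ idea with this expansion-and-subtraction; the induction on $W$ is what makes the lower-weight terms disposable.
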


\begin{proof}
By induction on the weight $w:=(\beta_S,d_2) \in W$, suppose that for all $w' < w$ we have invariance of any $n$-point function (except that if $\beta'_S = 0$ then $n \ge 3$). Here we would like to recall  that $W := (NE(\tilde E)/\sim) \subset NE(S) \oplus \mathbb{Z}$ is the quotient Mori cone.

By the definition of $J$ in (\ref{e:5.1}), for any $a \in H(X)$ we may pick up the fiber series over $w$ from the $\xi a z^{-(k + 2)}$ component of the assumed $\T$-invariance:
\begin{equation} \label{xi-equiv}
\T \langle \tau^n, \psi^k \xi a\rangle^X \cong \langle \tau'^n, \psi^k \xi' \T a \rangle^{X'}.
\end{equation}

Write $\tau(\hat t) = \sum_{\bar w \in W} \tau_{\bar w}(\hat t) q^{\bar w}$. The fiber series is decomposed into sum of subseries in $q^\ell$ of the form
$$
\langle \tau_{\bar w_1}(\hat t), \cdots, \tau_{\bar w_n}(\hat t), \psi^k\xi a\rangle^X_{w''} q^{\sum_{j = 1}^n \bar w_j + w''}.
$$
Since $\sum \bar w_j + w'' = w$, any $\bar w_j \ne 0$ term leads to $w" < w$, whose fiber series is of the form $\sum_i g_i(q^\ell, \hat t) h_i(q^\ell)$ with $g_i$ from $\prod \tau_{\bar w_j}(\hat t)$ and $h_i$ a fiber series over $w"$. The $g_i$ is $\T$-invariant by assumption and $h_i$ is $\T$-invariant by induction, thus the sum of products is also $\T$-invariant.

From (\ref{xi-equiv}) and $\tau_0(\hat t) = \hat t$, the remaining fiber series with all $\bar w_j = 0$ satisfies
$$
\T \langle \hat t^n, \psi^k \xi a\rangle^X_{w} \cong \langle \hat t^n, \psi^k \xi \T a\rangle^{X'}_{w'},
$$
which holds for any $n$, $k$ and $a$.

Now by Part I Theorem~4.2 (divisorial reconstruction and WDVV reduction) 
this implies the $\T$-invariance of all fiber series over $w$.
\end{proof}

Later we will see that for the GMT $\tau(\hat t)$ and $\tau'(\hat t)$, the lifting condition $\tau(\hat t) \equiv \hat t$ modulo $NE(S) \backslash \{0\}$ (instead of modulo $NE(X) \backslash \{0\}$) and the \emph{identity} $\T J(\tau(\hat t)).\xi \cong J'(\tau'(\hat t)).\xi'$ holds for split ordinary flops.

\section{Hypergeometric modification} \label{s:6}

From now on we work with a split local $P^r$ flop $f: X \dasharrow X'$ with bundle data $(S, F, F')$, where
$$
F = \bigoplus_{i = 0}^r L_i \quad \text{and} \quad F' = \bigoplus_{i = 0}^r L_i'.
$$
We study the explicit formula of the hypergeometric modification $I^X$ and
$I^{X'}$ associated to the double projective bundles $X \to S$ and $X' \to S$,
especially the symmetry property between them.

In order to get a better sense of the factor $I^{X/S}$ it is necessary to have
a precise description of the Mori cone first.
We then describe the Picard--Fuchs equations associated to the $I$ function.

\subsection{The minimal lift of curve classes and $\T$-effective cone}
Let $C$ be an irreducible projective curve with $\psi: V = \bigoplus_{i = 0}^r \mathscr{O}(\mu_i) \to C$ a split bundle. Denote by $\mu = \max \mu_i$ and $\bar\psi: P(V) \to C$ the associated projective bundle. Let $h
= c_1(\mathscr{O}_{P(V)}(1))$,
\begin{equation*}
b = \bar \psi^*[C].H_r = H_r = h^r + c_1(V)h^{r - 1}
\end{equation*}
be the canonical lift of the base curve, and $\ell$ be the fiber
curve class.

\begin{lemma} \label{minimal lift}
$NE(P(V))$ is generated by $\ell$ and $b - \mu \ell$.
\end{lemma}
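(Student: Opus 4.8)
The plan is to compute the Mori cone of $P(V)$ directly from its description as a projective bundle over a curve, using the two obvious effective classes and showing they are extremal. First I would recall that $P(V) = P(\bigoplus_{i=0}^r \mathscr{O}(\mu_i))$ over $C$ is a toric variety relative to $C$ (indeed a $P^r$-bundle), and that $NE(P(V))$ is a closed convex cone in a $2$-dimensional space $N_1(P(V))_{\mathbb{R}}$, spanned by $N_1(C)_{\mathbb{R}} = \mathbb{R}[C]$ and the fiber class $\mathbb{R}\ell$. So the cone is determined once we identify its two extremal rays. The ray generated by the fiber line $\ell$ is clearly extremal (it is contracted by $\bar\psi$, and $\ell$ is a line in a fiber $P^r$). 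The content is to identify the other extremal ray.

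Next I would exhibit a section of $\bar\psi$ realizing the class $b - \mu\ell$. Writing $\mu = \mu_{i_0}$ for the index achieving the maximum, the quotient $V \twoheadrightarrow \mathscr{O}(\mu)$ gives a section $\sigma: C \to P(V)$ whose image has class computed via $h|_\sigma = c_1(\mathscr{O}_{P(V)}(1))|_\sigma$. The key step is the intersection computation: using $h \cdot \sigma_* [C]$ together with the defining relation $\sum (h + \text{twists})$ for $H_r = h^r + c_1(V) h^{r-1}$ restricted to the section, one gets that the class of this section is $b - \mu\ell$ (the $-\mu\ell$ is exactly the correction coming from the degree of the sub/quotient line bundle selected). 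Hence $b - \mu\ell \in NE(P(V))$. Conversely, to see the ray through $b - \mu\ell$ is the boundary, I would pick a nef divisor pairing to zero with it: the tautological class $h + \mu H$ for a suitable pullback $H = \bar\psi^*(\text{pt})$ (equivalently $\mathscr{O}_{P(V)}(1) \otimes \bar\psi^*\mathscr{O}_C(\mu\cdot\text{pt})$) is globally generated because twisting $V$ by $\mathscr{O}_C(\mu\cdot\text{pt})$ makes it a quotient of a trivial bundle (all summands now have nonnegative degree, and one may further need $\mathscr{O}_C$ very ample — but on an irreducible curve one can absorb this, or argue on sufficiently positive multiples since only the ray matters). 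This nef class pairs to $0$ with both $\ell$ shifted appropriately — more precisely $(h + \mu H)\cdot \ell = 1 > 0$ and $(h+\mu H)\cdot(b-\mu\ell) = 0$ — wait, one wants the supporting functional, so instead pair $h+\mu H$ with $b - \mu\ell$: since $h \cdot b = h\cdot H_r = -c_1(V)\cdot h^{r-1}\cdots$; I would organize this so that the chosen nef class vanishes exactly on $b - \mu\ell$, certifying extremality of that ray.

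Combining the two steps: $NE(P(V))$ contains the cone spanned by $\ell$ and $b - \mu\ell$, and it is contained in it because any effective curve class, being a nonnegative combination $a\ell + c\, b$ after projecting ($c = $ degree over $C$ $\geq 0$), must satisfy the nef pairing inequality $(h+\mu H)\cdot(a\ell + c\,b) \geq 0$, which together with $c \geq 0$ forces it into the stated cone; the two generators are the boundary rays. Thus $NE(P(V))$ is generated by $\ell$ and $b - \mu\ell$.

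The main obstacle I anticipate is the bookkeeping in the intersection-theoretic computation pinning down the section class as exactly $b - \mu\ell$ rather than $b - \mu'\ell$ for some other $\mu'$, and the correct normalization of $h$, $H_r$, $b$, and the pullback point class on the curve $C$ (which may be singular or of positive genus, so "point class" must be taken as a degree-one $0$-cycle and nefness statements phrased in terms of sufficiently high multiples). Getting the sign and the role of $\max \mu_i$ (as opposed to $\min$) correct — it is the \emph{maximal} twist that produces the \emph{most negative} correction $-\mu\ell$ and hence the extremal section — is the crux; everything else is the standard structure theory of $NE$ for a $P^r$-bundle over a curve.
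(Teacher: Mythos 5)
Your strategy---exhibit a section with class $b - \mu\ell$ and certify its extremality with a nef divisor $h + \mu H$---is a direct version of what the paper achieves by a normalization trick: the paper twists $V$ by $\mathscr{O}(-\mu)$ to obtain $V' = \mathscr{O} \oplus N$ with $N$ semi-negative, for which $h'$ is the obvious nef supporting class and the ``zero section'' $b'$ the obvious extremal curve, and then translates $b'$ back to $b - \mu\ell$ via $h' = h + \mu p$. These are the same argument in different coordinates (indeed $h' = h + \mu p$ is exactly your $h + \mu H$), and the paper's version buys cleaner bookkeeping because after normalization the nef divisor and the extremal section are visible without any twist.

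Two details in your sketch would go wrong if followed literally, and they are precisely the bookkeeping issues you flagged as the crux. First, the convention: the formula $b = H_r = h^r + c_1(V)h^{r-1}$ with $h\cdot b = 0$ forces the relation $h^{r+1} + c_1(V)h^r = 0$, i.e.\ the \emph{sub}-convention (sections $\leftrightarrow$ line subbundles $L \hookrightarrow V$, with $h\cdot\sigma_*[C] = -\deg L$). So the extremal section is the inclusion $\mathscr{O}(\mu_{i_0}) \hookrightarrow V$, not a quotient $V \twoheadrightarrow \mathscr{O}(\mu)$; the latter belongs to the dual convention, where the roles of $\max$ and $\min$ swap. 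Relatedly, your parenthetical ``$h\cdot b = -c_1(V)\cdot h^{r-1}\cdots$'' is off (wrong codimension, and $h\cdot b = 0$ by construction). Second, the nefness of $h + \mu H$: global generation of $\mathscr{O}_P(1)\otimes\bar\psi^*\mathscr{O}_C(\mu\cdot\mathrm{pt})$ is stronger than needed and fails for general irreducible $C$ (a degree-zero line bundle on a positive-genus curve need not be globally generated). The clean certificate is degree-theoretic: for any irreducible $\Gamma \subset P(V)$ covering $C$ with degree $c \ge 1$, pulling back the tautological inclusion $\mathscr{O}_P(-1) \hookrightarrow \bar\psi^*V$ to the normalization $\tilde\Gamma$ and projecting to a summand shows $-h\cdot\Gamma = \deg\mathscr{O}_P(-1)|_{\tilde\Gamma} \le c\mu$, hence $(h+\mu H)\cdot\Gamma \ge 0$; together with $h$ relatively ample this gives nefness without any global-generation hypothesis. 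With those two repairs your route closes.
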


\begin{proof}
Consider $V' = \mathscr{O}(-\mu) \otimes V = \mathscr{O} \oplus
N$. Then $N$ is a semi-negative bundle and $NE(P(V)) \cong
NE(P(V'))$ is generated by $\ell$ and the zero section $b'$ of $N
\to P^1$. In this case $b'$ is also the canonical lift $b' = h'^r
+ c_1(V')h'^{r - 1}$. From the Euler sequence we know that $h' = h + \mu p$. Hence
\begin{equation*}
\begin{split}
b' &= (h + \mu p)^r + \sum_{i = 1}^r (\mu_i - \mu)p (h + \mu p)^{r + 1} \\
&= h^r + r\mu p h^{r - 1} + \sum_{i = 1}^r (\mu_i - \mu)p h^{r - 1}\\
&= h^r + c_1(V)h^{r - 1} - \mu ph^{r - 1}\\
&= b - \mu \ell.
\end{split}
\end{equation*}
\end{proof}

Let $\psi: V = \bigoplus_{i = 0}^r L_i \to S$ be a split bundle
with $\bar\psi: P = P(V) \to S$. Since $\bar\psi_*: NE(P) \to NE(S)$ is surjective, for each $\beta_S \in NE(S)$ represented by a curve $C = \sum_j n_j C_j$, the determination of $\bar\psi_*^{-1}(\beta_S)$ corresponds to the determination of $NE(P(V_{C_j}))$ for all $j$. Therefore by
Lemma \ref{minimal lift}, the minimal lift with respect to this curve decomposition is given by
\begin{equation*}
\beta^P := \sum_j n_j(\bar\psi^* [C_j].H_r - \mu_{C_j}\ell) = \beta_S - \mu_{\beta_S} \ell,
\end{equation*}
with $\mu_{C_j} = \max_i (C_j.L_i)$ and $\mu = \mu_{\beta_S} := \sum_j n_j \mu_{C_j}$. As before we identify the canonical lift $\bar\psi^*
\beta_S.H_r$ with $\beta_S$. Thus the crucial part is to determine the case of primitive classes. The general case follows from the primitive case by additivity. When there are more than one way to decompose into primitive classes, the minimal lift is obtained by taking the minimal one. Notice that further decomposition leads to smaller (or equal) lift. Also there could be more than one minimal lifts coming from different (non-comparable) primitive decompositions.

Now we apply the above results to study the effective and $\T$-effective
curve classes
under local split ordinary flop $f: X \dasharrow X'$ of type $(S, F, F')$.
Fixing a \emph{primitive} curve class $\beta_S \in NE(S)$, we define
\begin{equation*}
\mu_i := (\beta_S.L_i), \quad\mu'_i := (\beta_S.L'_i).
\end{equation*}
Let $\mu = \max \mu_i$ and
$\mu' = \max \mu'_i$. Then by working on an irreducible representation curve $C$ of $\beta_S$, we get by Lemma \ref{minimal lift}
\begin{equation*}
\begin{split}
NE(Z)_{\beta_S} &= (\beta_S - \mu \ell) + \Bbb Z_{\ge 0} \ell \equiv \beta_Z + \Bbb Z_{\ge 0} \ell,\\
NE(Z')_{\beta_S} &= (\beta_S - \mu' \ell') + \Bbb Z_{\ge 0} \ell' \equiv \beta_{Z'} + \Bbb Z_{\ge 0} \ell'.
\end{split}
\end{equation*}

Now we consider the further lift of the primitive element $\beta_Z$
(resp.~$\beta_{Z'}$) to $X$. The bundle $N \oplus
\mathscr{O}$ is of splitting type with Chern roots
$-h + L_i'$ and $0$, $i = 0, \ldots, r$. On $\beta_Z$ they take values
\begin{equation} \label{roots}
\mu + \mu'_i\quad (i = 0, \ldots, r) \quad\mbox{and}\quad 0.
\end{equation}
To determine the minimal lift of $\beta_Z$ in $X$, we separate it into two cases:

Case (1): $\mu + \mu' > 0$. The largest number in (\ref{roots}) is
$\mu + \mu'$ and
\begin{equation*}
NE(X)_{\beta_Z} = (\beta_Z - (\mu + \mu')\gamma) + \Bbb Z_{\ge 0}
\gamma.
\end{equation*}

Case (2): $\mu + \mu' \le 0$. The largest number in (\ref{roots}) is
$0$ and
\begin{equation*}
NE(X)_{\beta_Z} = \beta_Z + \Bbb Z_{\ge 0} \gamma.
\end{equation*}

To summarize, we have
\begin{lemma} \label{ef}
Given a primitive class $\beta_S \in NE(S)$,  $\beta = \beta_S +
d\ell + d_2 \gamma \in NE(X)$ if and only if
\begin{equation} \label{constraint}
d \ge - \mu \quad \mbox{and} \quad d_2 \ge - \nu,
\end{equation}
where $\nu = \max\{\mu + \mu', 0\}$.
\end{lemma}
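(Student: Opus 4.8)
The plan is to reduce the description of $NE(X)_{\beta_S}$ to two successive applications of Lemma~\ref{minimal lift}, following exactly the two-step tower $X = \tilde E \to Z \to S$ (equivalently, $X = P_{P_S(F)}(\mathscr{O}(-1)\otimes F'\oplus\mathscr{O})$ fibered over $Z = P_S(F)$ fibered over $S$). First I would fix a primitive $\beta_S \in NE(S)$ and an irreducible representative curve $C$, and pull the whole computation back to the curve $C$, so that all bundles in sight become split bundles over $C$ of the form $\bigoplus\mathscr{O}(\mu_i)$. By Lemma~\ref{minimal lift} applied to $P(F_C) = Z_C \to C$, with Chern values $\mu_i = (\beta_S.L_i)$ and $\mu = \max_i\mu_i$, the cone $NE(Z)_{\beta_S}$ is generated by the fiber class $\ell$ and the minimal lift $\beta_Z = \beta_S - \mu\ell$; this is precisely the already-recorded computation $NE(Z)_{\beta_S} = \beta_Z + \mathbb{Z}_{\ge 0}\ell$.

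Next I would run Lemma~\ref{minimal lift} a second time, now for the $P^r$-bundle $X = P_Z(N\oplus\mathscr{O}) \to Z$ restricted over the curve class $\beta_Z$, where $N = \mathscr{O}(-1)\otimes F'$ on $Z$. The Chern roots of $N\oplus\mathscr{O}$ are $-h + L_i'$ ($i = 0,\dots,r$) and $0$, which evaluate on $\beta_Z$ to the numbers in \eqref{roots}, namely $\mu + \mu_i'$ and $0$. Lemma~\ref{minimal lift} then says the minimal lift is obtained by subtracting $(\max\text{ of these roots})\cdot\gamma$ from $\beta_Z$, and the fiber class $\gamma$ is freely added. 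The maximum of $\{\mu + \mu_i', 0\}$ is $\max\{\mu + \mu', 0\} = \nu$, which splits into Case (1) $\mu + \mu' > 0$ and Case (2) $\mu+\mu'\le 0$ exactly as displayed before the lemma; in both cases one gets $NE(X)_{\beta_Z} = (\beta_Z - \nu\gamma) + \mathbb{Z}_{\ge 0}\gamma$. Writing a general class over $\beta_S$ as $\beta_S + d\ell + d_2\gamma$ and combining $\ell$-freeness from the first step with $\gamma$-freeness from the second, the condition $\beta \in NE(X)$ becomes $d\ell + d_2\gamma \in (-\mu\ell - \nu\gamma) + \mathbb{Z}_{\ge 0}\ell + \mathbb{Z}_{\ge 0}\gamma$, i.e.\ $d \ge -\mu$ and $d_2 \ge -\nu$, which is \eqref{constraint}.

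The one genuinely delicate point — and the step I expect to be the main obstacle — is justifying that effectivity in the two-step tower really is controlled fiberwise in this product-of-cones fashion: that $\bar\psi_*\colon NE(X)\to NE(Z)$ and $NE(Z)\to NE(S)$ are not merely surjective but that $NE(X)_{\beta_S}$ is genuinely the ``sum'' $\beta_Z^{\min} + \mathbb{Z}_{\ge0}\ell + \mathbb{Z}_{\ge0}\gamma$ with no further relations or missing classes. This is exactly where restricting to an \emph{irreducible} representative $C$ of the \emph{primitive} class $\beta_S$ is essential: over a smooth irreducible curve the relevant $\mathbb{P}^r$-bundles are toric and Lemma~\ref{minimal lift} gives the cone on the nose, and one must check that the minimal lift computed curve-by-curve agrees with the minimal lift of $\beta_S$ itself (additivity of $\mu_{\beta_S}$ under curve decomposition, already noted in the discussion preceding the lemma) and that taking a reducible or non-primitive representative can only enlarge, never shrink, the set of effective lifts. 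Once this fiberwise principle is in place, the rest is the bookkeeping of Chern-root evaluations carried out above.
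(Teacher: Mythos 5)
Your proposal reproduces the paper's argument exactly: the proof consists of applying Lemma~\ref{minimal lift} twice along the tower $X \to Z \to S$, restricting over an irreducible representative of the primitive class $\beta_S$ and its minimal section in $Z$, and evaluating the Chern roots $-h+L_i'$ and $0$ of $N\oplus\mathscr{O}$ on $\beta_Z = \beta_S - \mu\ell$ to obtain $\nu = \max\{\mu+\mu',0\}$ and hence the two constraints $d\ge -\mu$, $d_2 \ge -\nu$. The subtlety you raise at the end about the product-of-cones structure is likewise left implicit in the paper (it is precisely what the reduction to an irreducible representative of the primitive class, and the section realizing $\beta_Z$, is meant to address), and your small slip in calling $X\to Z$ a $P^r$-bundle (it is a $P^{r+1}$-bundle, since $N\oplus\mathscr{O}$ has rank $r+2$) does not affect the argument.
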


\begin{remark} \label{mu-nu}
For the general case $\beta_S = \sum_j n_j [C_j]$, the constants $\mu$, $\nu$ are replaced by
$$
\mu = \mu_{\beta_S} := \sum_j n_j \mu_{C_j}, \qquad \nu = \nu_{\beta_S} := \sum_j n_j \max \{\mu_{C_j} + \mu_{C_j'}, 0 \}.
$$
Thus a \emph{geometric minimal lift} $\beta_S^X \in NE(X)$ for
$\beta_S \in NE(S)$, \emph{with respect to the given primitive decomposition},
is
\begin{equation*}
\beta_S^X = \beta_S -\mu \ell - \nu\gamma.
\end{equation*}
(If $\mu_{C_j} + \mu'_{C_j} \ge 0$ for all $j$, then $\nu = \mu + \mu'$.)

The geometric minimal lifts describe $NE(X)$.
We will however only need a ``generic lifting'' ($I$-minimal lift in
Definition~\ref{d:imlift})
in the study of GW invariants.
\end{remark}

\begin{definition}
A class $\beta \in N_1(X)$ is $\T$-effective if $\beta \in
NE(X)$ and $\T \beta \in NE(X')$.
\end{definition}

\begin{proposition} \label{F-ef}
Let $\beta_S \in NE(S)$ be primitive. A class $\beta \in NE(X)$ over $\beta_S$ is $\T$-effective if and only if
\begin{equation}
d + \mu \ge 0 \quad \mbox{and} \quad d_2 - d + \mu' \ge 0.
\end{equation}
\end{proposition}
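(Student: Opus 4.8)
The plan is to unwind the definition of $\T$-effectivity into two Mori-cone memberships and then feed both into the description of $NE$ already obtained. By definition, $\beta = \beta_S + d\ell + d_2\gamma$ (over the primitive class $\beta_S$) is $\T$-effective iff $\beta \in NE(X)$ \emph{and} $\T\beta \in NE(X')$. Lemma~\ref{ef} rewrites the first condition as $d \ge -\mu$ and $d_2 \ge -\nu$ with $\nu = \max\{\mu + \mu', 0\}$. So the whole statement comes down to expressing $\T\beta \in NE(X')$ as inequalities in $(d, d_2)$ and then simplifying the resulting system.

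For the second condition I would invoke the description of the graph correspondence on $1$-cycles from Part~I: since $f$ is a flop over $S$ with flopping curves $\ell, \ell'$ and common infinity divisor $\xi = \xi'$, one has $\T\ell = -\ell'$, $\T\gamma = \ell' + \gamma'$, and $\T$ fixes the canonical lift of $\beta_S$; hence $\T$ commutes with the projections to $S$ and
$$
\T\beta = \beta_S + (d_2 - d)\,\ell' + d_2\,\gamma' \in N_1(X').
$$
Now $X' = \tilde E'$ is the local model of the same flop with $(F,F')$, $(\ell,\ell')$, $(\gamma,\gamma')$ interchanged, and $\nu$ is symmetric in $\mu, \mu'$; so applying Lemma~\ref{ef} on $X'$ to $\T\beta$ — which still lies over the primitive class $\beta_S$ — gives $\T\beta \in NE(X')$ iff $d_2 - d \ge -\mu'$ and $d_2 \ge -\nu$.

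Combining, $\beta$ is $\T$-effective iff the three inequalities $d + \mu \ge 0$, $d_2 + \nu \ge 0$, $d_2 - d + \mu' \ge 0$ all hold, and it remains only to see that the middle one is implied by the other two. I would split along the sign of $\mu + \mu'$: if $\mu + \mu' > 0$ then $\nu = \mu + \mu'$ and $d_2 \ge d - \mu' \ge -\mu - \mu' = -\nu$; if $\mu + \mu' \le 0$ then $\nu = 0$ and $d_2 \ge d - \mu' \ge -\mu - \mu' \ge 0$. In both cases $d_2 + \nu \ge 0$, so the system collapses to $d + \mu \ge 0$ and $d_2 - d + \mu' \ge 0$, which is the assertion.

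Strictly speaking there is no deep step here: the content is entirely in the $N_1$-action of $\T$ recalled from Part~I, after which only the elementary dichotomy above is needed. The single point that deserves attention is the legitimacy of applying Lemma~\ref{ef} on both $X$ and $X'$, and this is fine precisely because $\T$ preserves the base class $\beta_S$ (which therefore stays primitive) and because the constant $\nu$ is $\mu \leftrightarrow \mu'$-symmetric, so the two Mori-cone descriptions are genuinely parallel.
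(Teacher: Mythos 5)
Your proof is correct and follows essentially the same route as the paper: compute $\T\beta = \beta_S + (d_2-d)\ell' + d_2\gamma'$, apply Lemma~\ref{ef} on both $X$ and $X'$, and observe that the condition $d_2 \ge -\nu$ is implied by the other two via the chain $d_2 \ge d - \mu' \ge -(\mu+\mu') \ge -\nu$. The paper does this chain in one line without the explicit case split on the sign of $\mu+\mu'$, but the content is identical.
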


\begin{proof}
Let $\beta = \beta_S + d\ell + d_2 \gamma$, then $\T \beta = \beta_S - d\ell' + d_2 (\gamma' + \ell') = \beta_S + (d_2 - d)\ell' + d_2 \gamma =: \beta_S + d'\ell' + d'_2\gamma'$. It is clear that $\beta$ is $\T$-effective implies both inequalities. Conversely, the two inequalities imply that
$$
d_2 \ge d - \mu' \ge -(\mu + \mu') \ge -\nu,
$$
hence $\beta \in NE(X)$. Similarly $\T \beta \in NE(X')$.
\end{proof}

\subsection{Symmetry for $I$} \label{I-function}

For $F = \bigoplus_{i = 0}^r L_i$, $F' = \bigoplus_{i = 0}^r L'_i$, the Chern polynomials for $F$ and $N \oplus \mathscr{O}$ take the form
$$
f_F = \prod a_i := \prod (h + L_i), \qquad f_{N \oplus \mathscr{O}} =
b_{r + 1}\prod b_i := \xi \prod (\xi - h + L'_i).
$$
For $\beta =
\beta_S + d \ell + d_2 \gamma$, we set $\mu_i :=
(L_i.\beta_S)$, $\mu_i' := (L_i'.\beta_S)$. Then for $i = 0, \ldots, r$, $
(a_i.\beta) = d + \mu_i$, $(b_i.\beta) = d_2 - d + \mu_i'$, and $(b_{r + 1}.\beta) = d_2$. Let
\begin{equation}
\lambda_\beta = (c_1(X/S).\beta) = (c_1(F) + c_1(F')).\beta_S + (r + 2)d_2.
\end{equation}
The relative $I$ factor is given by
\begin{equation} \label{I-gamma}
I^{X/S}_{\beta} := \frac{1}{z^{\lambda_\beta}} \frac{\Gamma(1 +
\frac{\xi}{z})}{\Gamma(1 + \frac{\xi}{z} + d_2)} \prod_{i = 0}^r
\frac{\Gamma(1 + \frac{a_i}{z})}{\Gamma(1 + \frac{a_i}{z} + \mu_i + d)}
\frac{\Gamma(1 + \frac{b_i}{z})}{\Gamma(1 + \frac{b_i}{z} + \mu_i' + d_2 - d)},
\end{equation}
and the hypergeometric modification of $\bar p: X \to S$ is
\begin{equation} \label{HG-mod}
I = I(D, \bar t; z, z^{-1}) = \sum_{\beta
\in NE(X)} q^\beta e^{\frac{D}{z} + (D.\beta)} I^{X/S}_{\beta}
J^S_{\beta_S}(\bar t),
\end{equation}
where $D = t^1h + t^2\xi$ is the fiber divisor and
$\bar t \in H(S)$.

In more explicit terms, for a split projective bundle 
$\bar\psi: P = P(V) \to S$, the
relative $I$ factor is
\begin{equation}
I^{P/S}_\beta := \prod_{i = 0}^r 
\frac{1}{\prod\limits_{m =1}^{\beta.(h + L_i)} (h + L_i + mz)},
\end{equation}
where the product in $m \in \mathbb{Z}$ is directed in the sense
that
\begin{equation}
\prod_{m = 1}^{s} 
 := \prod_{m = -\infty}^s/\prod_{m = -\infty}^0.
\end{equation}
Thus for each $i$ with $\beta.(h + L_i) \le -1$, the
corresponding subfactor is understood as in the numerator which must contain the factor $h + L_i$ corresponding to $m = 0$. In
general $I$ is viewed as a cohomology valued Laurent series in
$z^{-1}$. By the dimension constraint it in fact has only finite
terms.

\begin{remark}
The relative factor comes from the equivariant Euler class of $$H^0(C, T_{P/S}|_C) - H^1(C, T_{P/S}|_C)$$ at the moduli point $[C \cong P^1\to X]$.
\end{remark}

\begin{definition}[$I$-minimal lift] \label{d:imlift}
Introduce
\[
 \mu_{\beta_S}^I := \max_i \{\beta_S.L_i\}, \quad
\mu_{\beta_S}'^I := \max_i \{\beta_S.L'_i\}
\]
and
\[
\nu_{\beta_S}^I = \max \{ \mu_{\beta_S}^I  + \mu_{\beta_S}'^I, 0\} \ge 0.
\]
Define the \emph{$I$-minimal lift} of $\beta_S$ to be
$$
\beta_S^I := \beta_S - \mu_{\beta_S}^I \ell - \nu_{\beta_S}^I \gamma \in NE(X)
$$
where $\beta_S \in NE(X)$ is the \emph{canonical lift} such that
$h.\beta_S = 0 = \xi.\beta_S$.
\end{definition}

Clearly, $\beta_S^I$ is an effective class in $NE(X)$, as
$\mu^I_{\beta_S} \le \mu_{\beta_S}$ and $\nu^I_{\beta_S} \le \nu_{\beta_S}$.
When the inequality is strict,
the $I$-{minimal lift} is more effective than any geometric minimal lift.
Nevertheless it is uniquely defined and we will show that it encodes the 
information of the hypergeometric modification.

\begin{definition}
Define $\beta$ to be \emph{$I$-effective}, denoted $\beta \in NE^I(X)$, if
$$
 d \ge -\mu^I_{\beta_S} \quad \mbox{and} \quad d_2 \ge
 -\nu^I_{\beta_S}.
$$
It is called $\T I$-effective if $\beta$ is $I$-effective and $\T\beta$ is $I'$-effective. By the same proof of Proposition \ref{F-ef}, this is equivalent to
$$
d + \mu^I_{\beta_S} \ge 0 \quad \mbox{and} \quad d_2 - d + \mu_{\beta_S}'^I \ge 0.
$$
\end{definition}

\begin{lemma} [(Vanishing lemma)] \label{non-ef}
If $\bar\psi_* \beta \in NE(S)$ but $\beta \not\in NE(P)$ then
$I^{P/S}_\beta = 0$. In fact the vanishing statement holds for any $\beta = \beta_S + d\ell$ with $d < -\mu^I_{\beta_S}$.
\end{lemma}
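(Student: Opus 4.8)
The plan is to prove the sharper assertion first — that $I^{P/S}_\beta = 0$ for \emph{every} class $\beta = \beta_S + d\ell$ with $d < -\mu^I_{\beta_S}$ — and then to obtain the first statement as the special case where $\bar\psi_*\beta \in NE(S)$ while $\beta \notin NE(P)$. For that reduction, write an arbitrary $\beta$ with $\bar\psi_*\beta \in NE(S)$ uniquely as $\beta = \beta_S + d\ell$, with $\beta_S$ the canonical lift ($h.\beta_S = 0$) and $d = h.\beta \in \mathbb{Z}$. Since $\mu^I_{\beta_S} \le \mu_{\beta_S}$ (Definition~\ref{d:imlift} and Remark~\ref{mu-nu}), the class
\[
\beta_S - \mu^I_{\beta_S}\ell \;=\; (\beta_S - \mu_{\beta_S}\ell) + (\mu_{\beta_S} - \mu^I_{\beta_S})\ell
\]
is effective by Lemma~\ref{minimal lift} (and the paragraph following it), hence so is $\beta_S + d\ell$ for every $d \ge -\mu^I_{\beta_S}$; thus $\beta \notin NE(P)$ forces $d < -\mu^I_{\beta_S}$, which is precisely the hypothesis of the sharper statement.

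For the sharper statement I would proceed in three short steps. First, record the numerics: using $h.\beta_S = 0$, $\ell.h = 1$, $\ell.L_i = 0$ and $L_i.\beta_S = \mu_i$, one has $\beta.(h+L_i) = d + \mu_i$ for all $i = 0,\dots,r$, and the hypothesis $d < -\mu^I_{\beta_S} = -\max_i\mu_i$ gives, by integrality, $\beta.(h+L_i) \le -1$ for \emph{every} $i$ simultaneously. Second, unwind the directed-product convention $\prod_{m=1}^{s} := \prod_{m=-\infty}^{s}/\prod_{m=-\infty}^{0}$: for $s := \beta.(h+L_i) \le -1$ this gives $1/\prod_{m=1}^{s}(h+L_i+mz) = \prod_{m=s+1}^{0}(h+L_i+mz)$, an honest polynomial whose $m=0$ term is exactly $h+L_i$. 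Multiplying over $i$,
\[
I^{P/S}_\beta \;=\; \Big(\prod_{i=0}^{r}(h+L_i)\Big)\cdot \prod_{i=0}^{r}\prod_{m=\beta.(h+L_i)+1}^{-1}(h+L_i+mz).
\]
Third, invoke the defining relation of $H^*(P(V))$ as a module over $H^*(S)$, namely $\prod_{i=0}^{r}(h+L_i) = \sum_{k=0}^{r+1}\bar\psi^*c_k(V)\,h^{r+1-k}$ — the Chern-polynomial relation, in the notation $f_F = \prod a_i$ of \S\ref{I-function} — which vanishes in $H^*(P)$. Hence $I^{P/S}_\beta = 0$ in $H^*(P)[z]$.

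I do not expect a genuine obstacle here; the argument is essentially bookkeeping. The two points needing care are (i) ensuring $\beta.(h+L_i)\le -1$ for \emph{every} $i$ and not merely for the maximizing one — which is exactly why the hypothesis is stated with the maximum $\mu^I_{\beta_S}$ and a strict inequality — and (ii) reading the directed-product symbol correctly so that each factor $h+L_i$ lands in the numerator; once this is in place, the reduction of the first assertion rests only on the comparison $\mu^I_{\beta_S}\le\mu_{\beta_S}$ and the description of $NE(P)$ in Lemma~\ref{minimal lift}.
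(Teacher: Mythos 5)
Your proof is correct and the core argument — that $d < -\mu^I_{\beta_S}$ forces $\beta.(h+L_i) = d+\mu_i \le d + \mu^I_{\beta_S} < 0$ for \emph{every} $i$, so each factor of $I^{P/S}_\beta$ lands in the numerator with $m=0$ contributing $h+L_i$, and $\prod_i(h+L_i)$ is the Chern polynomial $f_F$ which vanishes in $H^*(P)$ — is exactly the paper's proof. The one thing you supply beyond the paper is the explicit reduction of the first assertion to the sharper one, via the chain $\beta_S - \mu^I_{\beta_S}\ell = (\beta_S - \mu_{\beta_S}\ell) + (\mu_{\beta_S}-\mu^I_{\beta_S})\ell \in NE(P)$, which the paper leaves implicit; that step is correct and a useful clarification, relying on $\mu^I_{\beta_S}\le\mu_{\beta_S}$ and the effectivity of geometric minimal lifts established around Lemma~\ref{minimal lift} and Remark~\ref{mu-nu}.
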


\begin{proof}
We have $\beta.(h + L_i) = d + \mu_i \le d + \mu^I_{\beta_S} < 0$ for all $i$.
This implies that $I^{P/S}_\beta = 0$ since it contains the Chern
polynomial factor $\prod_i(h + L_i) = 0$ in the numerator.
\end{proof}

Now $I^{X/S}_\beta \equiv I^{Z/S}_\beta I^{X/Z}_\beta$ is given by
\begin{equation} \label{IX}
\prod_{i = 0}^r \frac{1}{\prod\limits_{m=1}^{\beta.a_i} (a_i + mz)} 
\prod_{i = 0}^r \frac{1}{\prod\limits_{m=1}^{\beta.b_i} (b_i + mz)} 
\frac{1}{\prod\limits_{m=1}^{\beta.\xi}
(\xi + mz)} =: A_\beta B_\beta C_\beta.
\end{equation}

Although (\ref{IX}) makes sense for any $\beta \in N_1(X)$, we have
\begin{lemma} \label{I-non-van}
$I^{X/S}_\beta$ is non-trivial only if $\beta \in NE^I(X)$.
\end{lemma}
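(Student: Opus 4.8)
The plan is to analyze the three grouped factors $A_\beta$, $B_\beta$, $C_\beta$ in the product \eqref{IX} separately, and show that if $\beta = \beta_S + d\ell + d_2\gamma$ fails either of the two $I$-effectivity inequalities then one of these factors vanishes identically as a cohomology-valued rational expression in $z$. Recall from \S\ref{I-function} that $(a_i.\beta) = d + \mu_i$, $(b_i.\beta) = d_2 - d + \mu_i'$, and $(\xi.\beta) = d_2$, where $\mu_i = (L_i.\beta_S)$, $\mu_i' = (L_i'.\beta_S)$. First I would treat the case $d < -\mu^I_{\beta_S}$: then $(a_i.\beta) = d + \mu_i \le d + \mu^I_{\beta_S} < 0$ for every $i$, so each subfactor of $A_\beta$ lies in the numerator and must contain the $m = 0$ term $(a_i + 0\cdot z) = a_i = h + L_i$; hence $A_\beta$ contains $\prod_{i=0}^r (h + L_i) = f_F$, which is the Chern polynomial of $F = \bigoplus L_i$ and therefore vanishes in $H(X)$ (it is the defining relation of the projective bundle $P_S(F)$). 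This is exactly the mechanism of Lemma \ref{non-ef}, applied to the $Z/S$ part of the fibration.

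Next I would handle the case $d_2 < -\nu^I_{\beta_S}$. Here $\nu^I_{\beta_S} = \max\{\mu^I_{\beta_S} + \mu'^I_{\beta_S}, 0\} \ge 0$, so in particular $d_2 < 0$, which makes $C_\beta = 1/\prod_{m=1}^{d_2}(\xi + mz)$ a numerator product; it contains the $m = 0$ factor $\xi$ only if we extend the range to include $m=0$, so instead I look at $B_\beta$. Write $(b_i.\beta) = d_2 - d + \mu_i'$. I want to show each of these is negative for all $i$. From $d_2 < -\nu^I_{\beta_S} \le -(\mu^I_{\beta_S} + \mu'^I_{\beta_S})$ we get $d_2 - d + \mu_i' < -\mu^I_{\beta_S} - \mu'^I_{\beta_S} - d + \mu_i' \le -\mu^I_{\beta_S} - d$; combined with the $I$-effectivity in the $\ell$-direction one would like $d \ge -\mu^I_{\beta_S}$, but that is precisely what may fail. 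So the argument must be organized as a dichotomy: either $d < -\mu^I_{\beta_S}$, in which case $A_\beta$ vanishes by the paragraph above; or $d \ge -\mu^I_{\beta_S}$, in which case $d_2 < -\nu^I_{\beta_S} \le -(\mu^I_{\beta_S}+\mu'^I_{\beta_S})$ forces $(b_i.\beta) = d_2 - d + \mu_i' \le d_2 + \mu^I_{\beta_S} \cdot(\text{sign issue})$ — more carefully, $d_2 - d + \mu_i' \le d_2 - d + \mu'^I_{\beta_S} < -\mu^I_{\beta_S} - \mu'^I_{\beta_S} - d + \mu'^I_{\beta_S} = -\mu^I_{\beta_S} - d \le 0$ using $d \ge -\mu^I_{\beta_S}$. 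Hence every subfactor of $B_\beta$ is a numerator product containing its $m=0$ term $b_i = \xi - h + L_i'$, so $B_\beta$ contains $\prod_{i=0}^r(\xi - h + L_i')$, which together with the factor $\xi = b_{r+1}$ is the Chern polynomial $f_{N\oplus\mathscr{O}}$, the defining relation of the second projective bundle structure $X = P_{P_S(F')}(\cdots)$, and therefore vanishes in $H(X)$.

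Finally, I combine the two cases: if $\beta \notin NE^I(X)$ then either $d < -\mu^I_{\beta_S}$ or ($d \ge -\mu^I_{\beta_S}$ and $d_2 < -\nu^I_{\beta_S}$), and in each case $I^{X/S}_\beta = A_\beta B_\beta C_\beta$ acquires a vanishing cohomological factor, so $I^{X/S}_\beta = 0$. The only real subtlety — the step I expect to be the main obstacle — is bookkeeping the directed product convention $\prod_{m=1}^{s} = \prod_{m=-\infty}^{s}/\prod_{m=-\infty}^{0}$ when $s \le -1$, i.e.\ verifying that a subfactor with negative pairing genuinely places the Chern root $a_i$ (resp.\ $b_i$) in the numerator as the $m = 0$ term, so that the product over all $i$ reconstructs the full Chern polynomial relation rather than just a partial product. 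Once that convention is pinned down, the vanishing is immediate from the presentation of $H(X)$ as an $H(S)$-algebra with the two relations $f_F = 0$ and $f_{N\oplus\mathscr{O}} = 0$.
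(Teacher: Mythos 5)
Your proof is correct and follows essentially the same two‑case argument as the paper: if $d < -\mu^I_{\beta_S}$ then every subfactor of $A_\beta$ is in the numerator and $A_\beta$ contains $f_F = \prod(h+L_i) = 0$; otherwise $d \ge -\mu^I_{\beta_S}$ forces $d_2 < -\nu^I_{\beta_S} \le 0$ and $d_2 - d + \mu_i' < 0$ for all $i$, so $B_\beta C_\beta$ contains $f_{N\oplus\mathscr{O}} = \xi\prod(\xi-h+L_i') = 0$. One small clarification: with the directed‑product convention $\prod_{m=1}^{s} = \prod_{m=-\infty}^{s}/\prod_{m=-\infty}^{0}$, the case $s \le -1$ already puts the $m=0$ term in the numerator (no ``extension'' of the range is needed), so $C_\beta$ with $d_2 < 0$ directly contributes the factor $\xi$ — but your final assembly of the Chern polynomial is correct regardless.
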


\begin{proof}
Indeed, if $\beta_S \in NE(S)$ but $\beta \not\in NE^I(X)$ then either $d < -\mu^I_{\beta_S}$ and $A_\beta = 0$ by Lemma \ref{non-ef}, or $d \ge -\mu^I_{\beta_S}$ and we must have $d_2 < -\nu^I_{\beta_S} \le 0$ and all factors in $B_\beta$ appear in the numerator:
\begin{equation*}
d_2 - d + \mu'_i \le d_2 + \mu^I_{\beta_S} + \mu'^I_{\beta_S} \le d_2 + \nu^I_{\beta_S} < 0.
\end{equation*}
In particular $B_\beta C_\beta$ contains the Chern polynomial $f_{N \oplus \mathscr{O}} = 0$.
\end{proof}

\begin{remark}
In view of Lemma \ref{ef}, $\beta \in NE^I(X)$ is the ``effective condition for $\beta$ as if it is a primitive class''. One way to think about this is that the localization calculation of the $I$ factor is performed on the main component of the stable map moduli where $\beta$ is represented by a smooth rational curve.

As far as $I$ is concerned, the $I$-effective class plays the role of effective classes. However one needs to be careful that the converse of Lemma \ref{I-non-van} is not true: If $\beta$ is $I$-effective, it is still possible to have $I^{X/S}_\beta = 0$.
\end{remark}

The expression (\ref{IX}) agrees with (\ref{I-gamma}) by taking out the $z$ factor with $m$. The total factor is clearly
$$
z^{-(\sum_{i = 0}^r a_i + \sum_{i = 0}^{r + 1} b_i).\beta} = z^{-c_1(X/S).\beta}.
$$

Similarly for $\beta' \in NE(X')$, 
$I^{X'/S}_{\beta'} \equiv I^{Z'/S}_{\beta'} I^{X'/Z'}_{\beta'}$ is given by
\begin{equation} \label{IX'}
\prod_{i = 0}^r \frac{1}{\prod\limits_{m=1}^{\beta'.a'_i} (a'_i + mz)} \prod_{i = 0}^r
\frac{1}{\prod\limits_{m=1}^{\beta'.b'_i}
(b'_i + mz)} \frac{1}{\prod\limits_{m=1}^{\beta'.\xi'} (\xi' + mz)} =: A'_{\beta'} B'_{\beta'} C'_{\beta'}.
\end{equation}
Here $a'_i = h' + L'_i = \T b_i$ and $b'_i = \xi' - h' + L_i = \T a_i$.

By the invariance of the Poincar\'e pairing, $(\beta.a_i) = d + \mu_i = (\T\beta.b'_i)$ and $(\beta.b_i) = d_2 - d + \mu'_i =
(\T\beta.a'_i)$, and it is clear that all
the \emph{linear subfactors} in $I^{X/S}_\beta$ and $I^{X'/S}_{\T \beta}$ correspond
perfectly under $A_\beta \mapsto B'_{\T \beta}$,
$B_\beta \mapsto A'_{\T \beta}$ and $C_\beta \mapsto C'_{\T
\beta}$.

However, since the cup product is not preserved under $\T$, in
general $\T I_\beta \ne I'_{\T \beta}$. Clearly, any direct comparison of $I_\beta$ and $I'_{\T \beta}$ (without analytic continuations) can make sense only if $\beta$ is $\T I$-effective. This is the case for $(\beta.a_i)$'s (resp.~$(\beta.b_i)$'s) not all negative. Namely $A_\beta$ and $B_\beta$ both contain factors in the denominator.

\begin{lemma}[(Naive quasi-linearity)] \label{naive-ql}
\begin{itemize}
\item[(1)] $\T I_\beta.\xi = I'_{\T \beta}.\xi'$.

\item[(2)] If $d_2 := \beta.\xi < 0$ then $\T I_\beta = I'_{\T
\beta}$.
\end{itemize}
The expressions in {\rm (1)} or {\rm (2)} are nontrivial only if $\beta$
is $\T I$-effective.
\end{lemma}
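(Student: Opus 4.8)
The plan is to exploit the perfect correspondence of linear subfactors already recorded before the lemma, namely $A_\beta \mapsto B'_{\T\beta}$, $B_\beta \mapsto A'_{\T\beta}$, $C_\beta \mapsto C'_{\T\beta}$ under the graph correspondence $\T$, together with the fact that the only obstruction to $\T I_\beta = I'_{\T\beta}$ is the failure of $\T$ to preserve cup products. First I would recall from Part~I the explicit description of $\T$ on $H(X)$: it is the identity on $H(S)$ and on the infinity divisor $\xi$, and the defect of the ring isomorphism is supported on classes that become divisible by a factor vanishing when multiplied by $\xi$. Concretely, writing any class in $H(X)$ in terms of the relative basis, the discrepancy $\T(\alpha\cdot\beta)-\T\alpha\cdot\T\beta$ lies in the kernel of cup product with $\xi$ (this is precisely the content of the cup-product defect computation in I-\S1). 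Thus when we cap the whole expression with $\xi$, the defect disappears.

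The key steps, in order: (i) Use \eqref{IX} and \eqref{IX'} to write $I_\beta = A_\beta B_\beta C_\beta$ and $I'_{\T\beta} = A'_{\T\beta}B'_{\T\beta}C'_{\T\beta}$ as products of linear factors in the respective cohomology rings, with the numerator/denominator directedness conventions. (ii) Invoke the pairing identities $(\beta.a_i)=(\T\beta.b'_i)$, $(\beta.b_i)=(\T\beta.a'_i)$, $(\beta.\xi)=(\T\beta.\xi')$ (invariance of the Poincaré pairing under $\T$) to match the index ranges $m$ of the directed products factor-by-factor, so that $\T$ carries each linear subfactor $a_i+mz$ of $A_\beta$ to the linear subfactor $b'_i+mz$ of $B'_{\T\beta}$, and similarly for $B_\beta\leftrightarrow A'_{\T\beta}$ and $C_\beta\leftrightarrow C'_{\T\beta}$. (iii) For part (1), multiply $I_\beta$ by $\xi$ and apply $\T$; since $\T\xi=\xi'$ and the product structure only enters through repeated cup products, the defect terms all sit in $\mathrm{Ann}(\xi)$ and hence are annihilated, giving $\T(I_\beta.\xi)=(\T I_\beta).\xi' = I'_{\T\beta}.\xi'$. (iv) For part (2), observe that when $d_2=\beta.\xi<0$ the factor $C_\beta = 1/\prod_{m=1}^{d_2}(\xi+mz)$ is a product taken in the numerator that includes the $m=0$ factor $\xi$ (by the directedness convention), so $I_\beta$ is already divisible by $\xi$; the same holds for $I'_{\T\beta}$ since $\T\beta.\xi'=d_2<0$ as well. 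Hence part (2) follows from part (1) after dividing both sides by $\xi$ (legitimate because $\xi$ is not a zero-divisor on the relevant piece — or more carefully, because the defect lies in $\mathrm{Ann}(\xi)$ and both sides are genuinely in $\xi\cdot H(X)$). (v) The final sentence — nontriviality forces $\T I$-effectiveness — is immediate from Lemma \ref{I-non-van} applied to both $\beta$ and $\T\beta$: if $\beta\notin NE^I(X)$ then $I_\beta=0$, and if $\T\beta\notin NE^I(X')$ then $I'_{\T\beta}=0$, so a nonzero expression on either side requires $\beta$ to be $\T I$-effective.

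The main obstacle I anticipate is step (iii): making precise the claim that "the cup-product defect is annihilated by $\xi$." One must be careful that $I_\beta$ is not literally a single cup product but a rational (in fact, after clearing $z$-powers, polynomial) expression in the Chern roots, so one needs to check that expanding $I_\beta$ as a polynomial in the cohomology ring and then capping with $\xi$ commutes appropriately with $\T$. The cleanest route is probably to argue at the level of the linear factors directly: each linear factor $a_i+mz$, $b_i+mz$, $\xi+mz$ maps under $\T$ to the corresponding factor $b'_i+mz$, $a'_i+mz$, $\xi'+mz$ as an \emph{element} of cohomology (this uses only that $\T$ is a group isomorphism $H^2(X)\cong H^2(X')$ compatible with the geometry, which holds on the nose), and the only place ring structure is used is in forming products of these factors; there the defect is controlled. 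Since the defect in any product of classes, after capping with $\xi$, vanishes — this is exactly the statement in Part~I that $\T$ restricted to $\xi\cdot H(X)$ (equivalently, the "fiber series" piece) is a ring-compatible isomorphism onto $\xi'\cdot H(X')$ — parts (1) and (2) follow. I would cite the relevant lemma from Part~I for this last point rather than reprove it.
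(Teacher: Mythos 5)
Your overall strategy matches the paper's: both parts rest on the fact that $f$ is an isomorphism over the infinity divisors, which you re-express as the cup-product defect of $\T$ being killed by $\xi$. Steps (i), (ii), (iii), and (v) are essentially the paper's proof, and your identification that $C_\beta$ (resp.\ $C'_{\T\beta}$) carries the $m=0$ factor $\xi$ (resp.\ $\xi'$) when $d_2<0$ is exactly the paper's observation for part (2).

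Where you diverge is in deducing (2) \emph{from} (1) by ``dividing both sides by $\xi$,'' and this step has a genuine gap. $\xi$ is a zero divisor in $H(X)$ — the relation $f_{N\oplus\mathscr{O}} = \xi\prod_i(\xi-h+L_i') = 0$ makes multiplication by $\xi$ non-injective even on $\xi\cdot H(X)$ — so from $\T(I_\beta\,\xi) = I'_{\T\beta}\,\xi'$ together with $I_\beta\in\xi H(X)$ and $I'_{\T\beta}\in\xi' H(X')$ one only learns that $\T I_\beta - I'_{\T\beta}$ lies in $\mathrm{Ann}(\xi')\cap\xi' H(X')$, which need not vanish. Your fallback justification — ``the defect lies in $\mathrm{Ann}(\xi)$ and both sides are genuinely in $\xi\cdot H(X)$'' — asserts exactly this intersection is zero, but that is not automatic and you never prove it. The paper sidesteps this entirely: it does not derive (2) from (1). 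Rather, since $I_\beta$ already contains $\xi$ as a factor, the class is supported on the infinity divisor, and the same \emph{geometric} reason used for (1) — $f$ restricts to a genuine isomorphism $E\cong E'$, under which the cohomology ring maps compatibly on the nose — applies directly to give $\T I_\beta = I'_{\T\beta}$ without any cancellation of $\xi$. To repair your argument you should replace ``divide by $\xi$'' with ``apply the same isomorphism-over-$E$ argument directly to $I_\beta = \xi\cdot(\text{rest})$,'' which is what the paper does.

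Your final sentence (nontriviality forces $\T I$-effectiveness via Lemma~\ref{I-non-van}) agrees with the paper exactly.
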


\begin{proof}
(1) follows from the facts that $f: X \dasharrow X'$ is an isomorphism over the
infinity divisors $E \cong E$. For (2), notice that since $d_2 <
0$ the factor $C_\beta$ contains $\xi$ in the numerator
corresponding to $m = 0$. Similarly $C'_{\T \beta}$ contains
$\xi'$ in the numerator. Hence (2) follows from the same reason as
in (1). The last statement follows from Lemma \ref{I-non-van}.
\end{proof}

\subsection{Picard--Fuchs system}

Now we return to the BF/GMT constructed in Theorem \ref{BF/GMT} and multiply it by the infinity divisor $\xi$:
$$
J^X(\tau(\hat t)).\xi = P(z) I^X(\hat t).\xi.
$$
By Proposition \ref{red-GMT} and Lemma \ref{naive-ql}, we need to show the $\T$-invariance for $P(z)$ and $\tau(\hat t)$ in order to establish the general analytic continuation.

The very first evidence for this is that, as in the case of classical hypergeometric series, $I^X$ (resp.~$I^{X'}$) is a solution to certain Picard--Fuchs system which turns out to be $\T$-compatible:

\begin{proposition} [(Picard--Fuchs system on $X$)]\label{PF-eq-X}
$\Box_\ell I^X = 0$ and $\Box_\gamma I^X = 0$, where
$$
\Box_\ell = \prod_{j = 0}^r z \p_{a_j} - q^\ell e^{t^1} \prod_{j = 0}^r z\p_{b_j}, \qquad
\Box_\gamma = z\p_{\xi} \prod_{j = 0}^r z\p_{b_j} - q^\gamma e^{t^2}.
$$
\end{proposition}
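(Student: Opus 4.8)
The plan is to verify the two Picard--Fuchs relations directly from the explicit product formula \eqref{IX} for $I^{X/S}_\beta$, using the elementary behaviour of the directed products $\prod_{m=1}^{s}$ under the shift $s \mapsto s+1$. First I would record the basic observation behind the whole computation: if $D_i$ is any of the divisor classes $a_i$, $b_i$, $\xi$ and $\beta$ is a curve class, then since $I = \sum_\beta q^\beta e^{D/z + (D.\beta)} I^{X/S}_\beta J^S_{\beta_S}$ with $D = t^1 h + t^2 \xi$, the operator $z\p_{D_i}$ acts on the $\beta$-summand by multiplication by $(D_i . \beta) + z\, \iota_{D_i}$ where $\iota_{D_i}$ denotes cup product with $D_i$ together with the $t$-derivatives encoded in $e^{D/z}$; more precisely, on the coefficient of $q^\beta$ one has $z\p_{a_j}$ acting as the operator ``multiply by $a_j + (a_j.\beta)z$ shifted appropriately'', and the key point is that this is exactly the operator that converts $\prod_{m=1}^{\beta.a_j}(a_j + mz)$ into $\prod_{m=1}^{\beta.a_j - 1}(a_j+mz)$ after dividing, i.e. it raises the ``$a_j$-level'' of $I^{X/S}_\beta$ to that of $I^{X/S}_{\beta - \ell}$ (for $a_j$) up to the $z$-power bookkeeping. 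I would make this precise as a short lemma: $z\p_{a_j} I = \sum_\beta q^\beta e^{D/z+(D.\beta)} (a_j + (\beta.a_j)z)\,(\text{stuff})$, and since the factor $1/\prod_{m=1}^{\beta.a_j}(a_j+mz)$ has top factor $(a_j + (\beta.a_j)z)$ in the denominator, multiplication by $(a_j+(\beta.a_j)z)$ cancels it.

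Next I would assemble $\prod_{j=0}^r z\p_{a_j}$ applied to $I$. By the lemma, on the $q^\beta$ coefficient this produces $\prod_j (a_j + (\beta.a_j)z)$ times the rest, which cancels the entire $A_\beta = \prod_j 1/\prod_{m=1}^{\beta.a_j}(a_j+mz)$ down to $A_{\beta-\ell}$ — here one uses that $\beta.a_j = d + \mu_j$ and $(\beta-\ell).a_j = d - 1 + \mu_j$ since $\ell.h = 1$, $\ell.L_j = 0$. So $\prod_{j=0}^r z\p_{a_j} I = \sum_\beta q^\beta e^{D/z+(D.\beta)}\, B_\beta C_\beta \cdot A_{\beta - \ell} \cdot (\text{$z$-power factor}) J^S_{\beta_S}$. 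Similarly $\prod_{j=0}^r z\p_{b_j} I$ lowers the $b_j$-levels, giving $A_\beta C_\beta A'$-type expression with $B_{\beta}$ replaced by $B_{\beta+\ell}$ — being careful that $\ell.b_j = -1$, so $(\beta+\ell).b_j = d_2 - d + \mu'_j$... actually $\ell.\gamma$ and $\ell.b_j$ need to be read off from the intersection table: $b_j = \xi - h + L'_j$, so $\ell.b_j = -\ell.h = -1$, hence this time differentiation raises $d$ effectively, matching a shift by $\ell$ in the opposite direction. Then multiplying $\prod_j z\p_{b_j} I$ by $q^\ell e^{t^1}$ (which sends the $q^\beta$ coefficient to the $q^{\beta - \ell}$ coefficient and multiplies by $e^{(\ell.\,\cdot)}$ to match $(D.\beta)$ terms) should produce exactly $\prod_j z\p_{a_j} I$, giving $\Box_\ell I = 0$. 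The identity $\Box_\gamma I = 0$ is analogous but uses the single factor $C_\beta = 1/\prod_{m=1}^{d_2}(\xi+mz)$ together with $B_\beta$: $z\p_\xi$ strips the top factor of $C_\beta$ (using $\gamma.\xi = 1$) and $\prod_j z\p_{b_j}$ strips the top of each factor of $B_\beta$ (using $\gamma.b_j = 1$ as well, since $b_j.\gamma = 1$ from the local intersection data), so $z\p_\xi \prod_j z\p_{b_j} I$ reduces the $(\xi; b_0,\dots,b_r)$-levels of $I^{X/S}_\beta$ all by one, which is precisely $I^{X/S}_{\beta-\gamma}$; multiplying by $q^\gamma e^{t^2}$ reindexes $\beta - \gamma \mapsto \beta$ and supplies the missing $e^{(\gamma.D)}$, so $z\p_\xi\prod_j z\p_{b_j} I = q^\gamma e^{t^2} I$, i.e. $\Box_\gamma I = 0$.

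Two bookkeeping points need care and I expect them to be the main obstacle rather than any deep idea. First, the $z$-power normalization: $I^{X/S}_\beta$ in \eqref{I-gamma} carries $z^{-\lambda_\beta}$ with $\lambda_\beta = (c_1(X/S).\beta)$, and each application of $z\p_{D_i}$ shifts the $z$-power; I must check that the total $z$-power after applying $\prod z\p_{a_j}$ (which is $r{+}1$ derivatives, each contributing a $z$) matches $z^{-\lambda_{\beta-\ell}}$ after the reindexing, i.e. $\lambda_{\beta} - (r+1) \cdot 1$ versus $\lambda_{\beta-\ell}$ — and indeed $\lambda_\beta - \lambda_{\beta-\ell} = c_1(X/S).\ell$, which should equal $r+1$ for an ordinary $P^r$ flop local model (this is where the ``$P^r$'' and the precise $c_1$ computation enter); for $\Box_\gamma$ the count is $r+2$ derivatives against $\lambda_\beta - \lambda_{\beta-\gamma} = c_1(X/S).\gamma = r+2$, matching the stated $(r+2)d_2$ term in $\lambda_\beta$. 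Second, the directed-product convention: when $\beta.a_j \le 0$ the factor $1/\prod_{m=1}^{\beta.a_j}(a_j+mz)$ is actually a polynomial containing $(a_j + 0\cdot z) = a_j$, and I must check the lemma ``$z\p_{a_j}$ multiplies by $a_j + (\beta.a_j)z$'' still correctly relates level $\beta$ to level $\beta-\ell$ in that regime (it does, because $\prod_{m=1}^{s}$ and $\prod_{m=1}^{s-1}$ always differ by the single factor at $m = s$, regardless of the sign of $s$ — this is the whole point of the directed-product definition \eqref{IX}), and in particular the vanishing cases of Lemmas \ref{non-ef} and \ref{I-non-van} are consistent with the relations (both sides vanish). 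Finally I would remark that the derivation is purely formal in the divisor variables $t^1, t^2$ and the Novikov variables, and $J^S_{\beta_S}$ is untouched by $z\p_{a_j}, z\p_{b_j}, z\p_\xi$ since these are fiber-divisor derivatives, so the argument goes through term by term in $\beta$.
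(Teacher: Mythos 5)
Your overall strategy is exactly the paper's: expand $I^X$ so that the fiber-divisor derivatives $z\p_{a_j}$, $z\p_{b_j}$, $z\p_\xi$ act only on $e^{D/z+(D.\beta)}$ and on the directed products in $I^{X/S}_\beta$ (leaving $J^S_{\beta_S}$ alone), note that $z\p_v$ multiplies the $q^\beta$-summand by $v + z(v.\beta)$, and check that this is precisely the factor cancelling the top of $A_\beta$ (resp.\ $B_\beta$, $C_\beta$) and converting it into $A_{\beta-\ell}$ (resp.\ $B_{\beta+\ell}$, $C_{\beta-\gamma}$); then reindex and invoke the vanishing Lemma~\ref{non-ef} at the effectivity boundary. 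That is the paper's proof of Proposition~\ref{PF-eq-X} essentially verbatim, and it is correct.

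There is, however, a concrete error in your $z$-power sanity check. You assert that $\lambda_\beta - \lambda_{\beta-\ell} = c_1(X/S).\ell = r+1$. This is false: from the Euler sequences for $Z = P_S(F) \to S$ and $X = P_Z(N \oplus \mathscr{O}) \to Z$ one finds $c_1(X/S) = (r+2)\xi + c_1(F) + c_1(F')$, and each of $\xi$, $c_1(F)$, $c_1(F')$ pairs to zero with the fiber class $\ell$, so $c_1(X/S).\ell = 0$. Equivalently, $\lambda_\beta = (c_1(F)+c_1(F')).\beta_S + (r+2)d_2$ depends only on $(\beta_S, d_2)$, not on $d$, exactly as displayed in the paper's definition of $\lambda_\beta$. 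The reason your ``check'' should fail is that you are importing the bookkeeping appropriate for $\Box_\gamma$ (where the right-hand side $q^\gamma e^{t^2}$ carries no derivatives, so all $r+2$ powers of $z$ from $z\p_\xi\prod_j z\p_{b_j}$ must be absorbed by $\lambda_\beta - \lambda_{\beta-\gamma} = r+2$), into the $\Box_\ell$ situation, where the right-hand side $q^\ell e^{t^1}\prod_j z\p_{b_j}$ carries the same $r+1$ derivatives and hence the same $z^{r+1}$; the $z$-powers then balance without any $\lambda$-shift, and the correct consistency condition is $\lambda_\beta = \lambda_{\beta-\ell}$, which holds. (A simpler remedy, which is what the paper does: work with the form \eqref{IX} of $I^{X/S}_\beta$ rather than the $\Gamma$-function form \eqref{I-gamma}, so that the $z$-powers are already packaged inside the factors $a_i + mz$, $b_i + mz$, $\xi + mz$, and no separate $z^{-\lambda_\beta}$ prefactor needs to be tracked at all.) This slip does not invalidate the argument — the cancellation is genuine — but the stated check would fail if carried out and should be replaced by the observation that $\lambda_\beta$ is independent of $d$.

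One further small correction: you write $(\beta + \ell).b_j = d_2 - d + \mu_j'$, but since $\ell.b_j = -1$ this is $\beta.b_j$, not $(\beta+\ell).b_j = d_2 - d - 1 + \mu_j'$; you notice something is off and correct course, but the initial formula should be fixed.
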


Recall that $t^1$, $t^2$ are the dual coordinates of $h$, $\xi$ respectively. Here we use $\p_v$ to denote the directional derivative in $v$. Thus if $v = \sum v^i T_i \in H^2$ then $\p_v = \sum v^i \p_{t^i}$.

\begin{proof}
By extracting all the divisor variables $D = t^1 h + t^2 \xi$ and $\bar t_1 \in H^2(S)$ from $I^X$ (where $\bar t = \bar t_1 + \bar t_2$), we get
$$
I^X = \sum_{\beta \in NE(X)} q^\beta e^{\frac{D + \bar t_1}{z} + (D + \bar t_1).\beta} I^{X/S}_\beta J^S_{\beta_S}(\bar t_2).
$$
It is clear that $z\p_v$ produce the factor $v + z(v.\beta)$ for $v \in H^2$. From (\ref{IX}), $\prod_j z\p_{a_j}$ modifies the $A_\beta B_\beta C_\beta$ factor to
$$
\prod_{j = 0}^r\frac{1}{\displaystyle \prod_{m = 1}^{\beta.a_j - 1}(a_j + mz)}
B_\beta C_\beta = A_{\beta - \ell} B_{\beta -\ell} 
\prod_{j = 0}^r (b_j + z(\beta - \ell).b_j) C_{\beta - \ell}
$$
(since $\beta.a_j - 1 = (\beta - \ell).a_j$, $(\beta - \ell).b_j = \beta.b_j + 1$ and $(\beta - \ell).\xi = \beta.\xi$).

Clearly it equals the corresponding term from $q^\ell e^{t^1} \prod_j z\p_{b_j} I^X$ unless $\beta - \ell$ is not effective. But in that case the term is itself zero since $A_{\beta - \ell} = 0$ by Lemma \ref{non-ef}.

The proof for $\Box_\gamma I^X = 0$ is similar and is thus omitted.
\end{proof}

Similarly $I^{X'}$ is a solution to
$$
\Box_{\ell'} = \prod_{j = 0}^r z \p_{a'_j} - q^{\ell'} e^{-t^1} \prod_{j = 0}^r z\p_{b'_j}, \qquad
\Box_{\gamma'} = z\p_{\xi'} \prod_{j = 0}^r z\p_{b'_j} - q^{\gamma'} e^{t^2 + t^1},
$$
where the dual coordinates of $h'$ and $\xi'$ are $-t^1$ and $t^2 + t^1$ (since $\T (t^1 h + t^2 \xi) = t^1(\xi' - h') + t^2 \xi' = (-t^1)h' + (t^2 + t^1)\xi'$).

\begin{proposition} \label{p:6.11}
$$
\T \langle \Box^X_\ell, \Box^X_\gamma\rangle \cong
\langle \Box^{X'}_{\ell'}, \Box^{X'}_{\gamma'}\rangle.
$$
\end{proposition}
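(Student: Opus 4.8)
The plan is to compare the two Picard--Fuchs ideals term-by-term under the graph correspondence $\T$, using the dictionary between linear subfactors already recorded after \eqref{IX'}. Recall that $\T$ sends $a_i \mapsto b'_i$, $b_i \mapsto a'_i$, $\xi \mapsto \xi'$, and on the divisor variables sends $t^1 h + t^2\xi \mapsto (-t^1)h' + (t^1 + t^2)\xi'$; correspondingly $\T$ maps the derivation $z\p_{a_i}$ to $z\p_{b'_i}$, $z\p_{b_i}$ to $z\p_{a'_i}$, and $z\p_\xi$ to $z\p_{\xi'}$, since these are directional derivatives dual to cohomology classes and $\T$ is a linear isometry on $H^2$ away from the issues coming from the cup-product defect (which does not enter because all of $a_i, b_i, \xi$ lie in $H^2$ and we only differentiate, never multiply). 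First I would spell out this correspondence of first-order operators carefully, tracking the Novikov/exponential bookkeeping: $q^\ell e^{t^1}$ is the monomial attached to the curve class $\ell$, and under $\T$ the class $\ell$ on $X$ is identified with a curve class on $X'$ (from the description $\T(\beta_S + d\ell + d_2\gamma) = \beta_S + (d_2 - d)\ell' + d_2\gamma'$ one reads off $\T\ell = -\ell'$, $\T\gamma = \gamma' + \ell'$), so after the analytic continuation identifying $q^{\ell}\leftrightarrow q^{-\ell'}$ etc., the monomial $q^\ell e^{t^1}$ is matched with $q^{\ell'}e^{-t^1}$ and $q^\gamma e^{t^2}$ with $q^{\gamma'}e^{t^1+t^2}$; these are exactly the coefficients appearing in $\Box_{\ell'}$ and $\Box_{\gamma'}$.

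With this dictionary in hand, the second step is the actual matching of generators. Applying $\T$ to $\Box^X_\ell = \prod_{j=0}^r z\p_{a_j} - q^\ell e^{t^1}\prod_{j=0}^r z\p_{b_j}$ produces $\prod_j z\p_{b'_j} - q^{\ell'}e^{-t^1}\prod_j z\p_{a'_j}$, which is exactly $-q^{\ell'}e^{-t^1}\bigl(\prod_j z\p_{a'_j} - q^{-\ell'}e^{t^1}\prod_j z\p_{b'_j}\bigr)$; since $q^{-\ell'}e^{t^1}$ is an invertible Novikov-type element, this differs from $\Box^{X'}_{\ell'}$ only by multiplication by a unit, hence generates the same (left or two-sided, since these operators commute among the relevant variables) ideal. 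Similarly $\T\Box^X_\gamma = z\p_{\xi'}\prod_j z\p_{a'_j} - q^{\gamma'}e^{t^1+t^2}$. But this is $\Box^{X'}_{\gamma'}$ with the roles of $a'$ and $b'$ swapped; to recognize that $\langle \T\Box^X_\ell, \T\Box^X_\gamma\rangle = \langle \Box^{X'}_{\ell'}, \Box^{X'}_{\gamma'}\rangle$ I would use $\Box^X_\ell$ to rewrite $\prod_j z\p_{a'_j}$ modulo the $\ell'$-generator in terms of $\prod_j z\p_{b'_j}$ (up to the unit and the $q^{\ell'}e^{-t^1}$ factor), converting $\T\Box^X_\gamma$ into $\Box^{X'}_{\gamma'}$ modulo $\langle\Box^{X'}_{\ell'}\rangle$, and conversely. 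So the two ideals agree after using each other's first generator; this is the content of "$\cong$" (equality of ideals up to analytic continuation of the Novikov variables).

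The main obstacle I expect is not the algebra of operators but making the statement "$\cong$" precise: one must specify in which ring of differential operators (with which completed coefficient ring, incorporating both the Novikov completion and the analytic continuation substitution $\f(q^\ell e^{t^1})$ discussed around \eqref{rat-f}) the equality of ideals is asserted, and check that the unit factors $q^{\ell'}e^{-t^1}$, $q^{\gamma'}e^{t^1+t^2}$ are genuinely invertible there — i.e.\ that the analytic continuation $q^\ell \leftrightarrow q^{-\ell'}$ is compatible with the completion so that these monomials do not lie in the maximal ideal after continuation. A secondary subtlety is the ordering/commutativity of the operators: $z\p_{a_j}$, $z\p_{b_j}$, $z\p_\xi$ all commute with one another and with the divisor variables $t^1, t^2$ only up to the obvious $[z\p_{t^i}, t^j] = z\delta^{ij}$ relations, so when I "divide by a unit" $q^{\ell'}e^{-t^1}$ I must move it past the derivations and absorb the resulting lower-order correction terms — these stay inside the same ideal, but I would verify this explicitly rather than wave it away. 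Modulo these bookkeeping points, the proposition reduces to the elementary observation that $\T$ interchanges the two "halves" ($a$-factors and $b$-factors) of each box operator while scaling by an invertible Novikov monomial, which is transparent from the linear-subfactor correspondence established just before the statement.
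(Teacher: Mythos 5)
Your overall strategy is the same as the paper's: show that $\T\Box_\ell$ is an invertible Novikov monomial times $\Box_{\ell'}$, and then use $\Box_{\ell'}$ to trade $\prod_j z\p_{a'_j}$ for $\prod_j z\p_{b'_j}$ inside $\T\Box_\gamma$ so that it lands in the left ideal $\langle\Box_{\ell'},\Box_{\gamma'}\rangle$. That is exactly the paper's two-line computation. However, the explicit transforms you wrote down are off by an invertible factor in a way that breaks the algebra. Since $\T\ell=-\ell'$, $\T\gamma=\gamma'+\ell'$, and $\T D.\T\ell=D.\ell=t^1$, $\T D.\T\gamma=D.\gamma=t^2$, the correct transforms are
$$\T\bigl(q^\ell e^{t^1}\bigr)=q^{-\ell'}e^{t^1},\qquad \T\bigl(q^\gamma e^{t^2}\bigr)=q^{\gamma'+\ell'}e^{t^2},$$
whereas you wrote $q^{\ell'}e^{-t^1}$ and $q^{\gamma'}e^{t^1+t^2}$ (the reciprocal of the first, and the second divided by $q^{\ell'}e^{-t^1}$). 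These latter are the monomials that \emph{appear in} $\Box_{\ell'}$ and $\Box_{\gamma'}$, not the images of the $X$-side monomials. With your values, the expression $\prod_j z\p_{a'_j}-q^{-\ell'}e^{t^1}\prod_j z\p_{b'_j}$ in your parentheses is \emph{not} $\Box_{\ell'}$ (which has coefficient $q^{\ell'}e^{-t^1}$), so the claimed "unit multiple of $\Box_{\ell'}$" fails. With the correct values one gets the clean identities
$$\T\Box_\ell=-q^{-\ell'}e^{t^1}\,\Box_{\ell'},\qquad \T\Box_\gamma=z\p_{\xi'}\,\Box_{\ell'}+q^{\ell'}e^{-t^1}\,\Box_{\gamma'},$$
which is precisely what the paper records.

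Your flagged worry about commuting the unit past derivations does not actually bite here: in the above identities the unit sits on the \emph{left}, so nothing needs to be moved across $\prod z\p_{a'_j}$ or $\prod z\p_{b'_j}$; and in the term $z\p_{\xi'}\Box_{\ell'}$ the only commutator that could arise is $[z\p_{\xi'},\,q^{\ell'}e^{-t^1}]$, which vanishes because $\p_{\xi'}=\p_{t^2}$ in the rotated coordinates $(s^1,s^2)=(-t^1,t^1+t^2)$ and $e^{-t^1}$ has no $t^2$-dependence. So no lower-order correction terms appear. Aside from the sign/exponent slips — which, once corrected, make your factoring go through verbatim — your proposal reproduces the paper's argument.
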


\begin{proof}
It is clear that
$$
\T \Box_\ell = -q^{-\ell'} e^{t^1} \Box_{\ell'},
$$
and
\begin{equation*}
\T \Box_\gamma = z\p_{\xi'} \prod_{j = 0}^r z\p_{a'_j} - q^{\gamma' + \ell'} e^{t^2} = z\p_{\xi'} \Box_{\ell'} + q^{\ell'} e^{-t^1} \Box_{\gamma'}.
\end{equation*}
\end{proof}

Namely, the Picard--Fuchs system on $X$ and $X'$ are indeed equivalent under $\T$. Moreover, both $I = I^X$ and $I ' = I^{X'}$ satisfy this system, but in different coordinate charts ``$|q^\ell| < 1$'' and ``$|q^{\ell}| > 1$'' (of the K\"ahler moduli) respectively.

We do not expect $I$ and $I'$ to be the same solution under analytic 
continuations in general. 
In fact, they are not in some examples. 
We know this is not true for $J$ and $J'$ since the general descendent 
invariants are not $\T$-invariant. 
Nevertheless it turns out that $P(z)$ and $\tau(\hat t)$ are correct objects 
to admit $\T$-invariance.

\begin{lemma} \label{B=1}
Modulo $q^{\beta_S}$, $\beta_S \in NE(S)$ and $\gamma$, we have $P(z) \equiv 1$ and $\tau(\hat t) \equiv \hat t$.
\end{lemma}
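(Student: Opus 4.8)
The plan is to analyze the construction of $P(z)$ in Theorem \ref{BF/GMT} under the coarser truncation ``modulo $q^{\beta_S}$ for $\beta_S \in NE(S)\setminus\{0\}$ and modulo $q^\gamma$'' and show nothing is removed. First I would recall that $P(z) = 1 + \sum_{\beta\ne 0} q^\beta P_\beta$ is built by induction on $\beta \in NE(X)$, where $P_\beta$ is created precisely to cancel the top nonnegative $z$-power term of $P_{<\beta}(z)I$. So the statement $P(z)\equiv 1$ modulo the ideal generated by these Novikov variables is equivalent to: for every $\beta$ lying over some $\beta_S=0$ and with $\gamma$-coefficient $d_2=0$ (i.e.\ $\beta = d\ell$ with $d>0$), the term $A_j$ in \eqref{top-z} that would force a nontrivial $P_\beta$ already has negative $z$-power, so no quantization step is triggered. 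Equivalently, it suffices to show that for $\beta = d\ell$ ($d\ge 1$) the relative factor $I^{X/S}_{d\ell}$ contributes only negative powers of $z$ to $I^X$ (after the $e^{D/z}$ prefactor is divided out, i.e.\ in the expansion of $P_{<\beta}I$ which starts from $e^{\hat t/z}$).

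The key computation is thus the explicit evaluation of $I^{X/S}_{d\ell}$ from \eqref{I-gamma} or \eqref{IX}. For $\beta = d\ell$ we have $\beta_S=0$, hence all $\mu_i = 0 = \mu'_i$, and $d_2 = \beta.\xi = 0$, while $\beta.a_i = d$, $\beta.b_i = -d$, $\beta.\xi = 0$. So $C_{d\ell} = 1$, the $A$-factor is $\prod_{i=0}^r \prod_{m=1}^{d}(a_i+mz)^{-1}$ (all in the denominator, contributing $z^{-(r+1)d}$ times units), and the $B$-factor is $\prod_{i=0}^r \prod_{m=1}^{-d}(b_i+mz)^{-1} = \prod_{i=0}^r\prod_{m=1}^{d}(b_i - mz)$ by the directed-product convention — a polynomial in $z$ of degree $(r+1)d$, but crucially containing the $m=0$ Chern-root factor $\prod_i b_i$ only when... wait, it does not: for $\beta.b_i = -d < 0$ the convention puts $m$ running over $\{0,-1,\dots,-d+1\}$ hmm — I need to recheck: the product $\prod_{m=1}^{-d}$ with the directed convention $\prod_{m=1}^{s}=\prod_{-\infty}^{s}/\prod_{-\infty}^0$ gives, for $s=-d<0$, the reciprocal $1/\prod_{m=s+1}^{0} = 1/\big(b_i\prod_{m=-d+1}^{-1}(b_i+mz)\big)$ in the \emph{numerator} of the whole expression, i.e.\ $B_{d\ell}$ contains $\prod_i b_i$ in the numerator. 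But $\prod_{i=0}^r b_i = \prod (\xi - h + L_i')$ restricted to $\beta_S=0$ divisor classes — this is a nonzero class on $X$, so $B_{d\ell}\ne 0$. Therefore the right statement to extract is a $z$-degree count: $A_{d\ell}B_{d\ell}$ has $z$-valuation $-(r+1)d$ from $A$ and top $z$-degree $(r+1)d - (r+1) = (r+1)(d-1)$ from $B$ (one fewer because of the Chern-root factor with no $z$), for a net top $z$-power of $(r+1)(d-1) - (r+1)d < 0$ — hence, after accounting for the prefactor $e^{D/z+(D.\beta)}$ whose $z$-expansion starts at $z^0$ and the $J^S_0 = 1$ contribution, every term of $q^{d\ell}$ in $I^X$ carries a strictly negative power of $z$. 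Consequently $P_{<\beta}I$ for $\beta$ over $\beta_S=0$, $d_2=0$ has no nonnegative $z$-power part to remove, so $P_\beta$ stays $1$, giving $P(z)\equiv 1$; then $J = PI \equiv I \equiv e^{\hat t/z} \equiv 1 + \hat t/z + \cdots$ modulo the ideal, and reading off the $1/z$ coefficient gives $\tau(\hat t)\equiv \hat t$.

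The main obstacle I anticipate is getting the directed-product / Chern-root bookkeeping exactly right — in particular confirming that $B_{d\ell}$ contributes a top $z$-degree of at most $(r+1)(d-1)$ rather than $(r+1)d$, which hinges on the $m=0$ factor $\prod_i b_i$ appearing with no accompanying $z$ and on the fact that this Chern class, while nonzero, is what caps the degree. One should also double check the edge behavior of the induction: $P_\beta$ for $\beta$ over $\beta_S=0$ but $d_2>0$ (i.e.\ involving $\gamma$) is simply killed by reducing mod $q^\gamma$, and mixed classes $\beta = \beta_S + d\ell + d_2\gamma$ with $\beta_S\ne 0$ are killed by reducing mod $q^{\beta_S}$; so only the $\beta = d\ell$ family genuinely needs the $z$-degree argument above. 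Once that single family is handled, Lemma \ref{B=1} follows from the recursive definition of $P(z)$ and Theorem \ref{BF/GMT}.
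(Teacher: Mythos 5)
Your proposal is correct and mirrors the paper's proof: both reduce to showing that for extremal classes $\beta = d\ell$ ($d\ge 1$) the relative factor $I^{X/S}_{d\ell}$ carries only negative $z$-powers (your count $(r+1)(d-1)-(r+1)d = -(r+1)$ is exactly the paper's $I_{[\beta]=0}= e^{\hat t/z}(1 + O(1/z^{r+1}))$), so no Birkhoff step is triggered for classes surviving modulo $q^{\beta_S}$ and $q^\gamma$, whence $P\equiv 1$ and $\tau\equiv\hat t$. The paper runs the same observation via the induction on the quotient weight lattice $W$, but the substance is the same $z$-degree bookkeeping you carry out on $A_{d\ell}B_{d\ell}$.
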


\begin{proof}
One simply notices that in the proof of Theorem \ref{BF/GMT} to construct $P(z)$, the induction can be performed on $[\beta] = (\beta_S, d_2) \in W $, 
as in Part I Section~3.2, by removing the whole series in $q^\ell$ with the same top non-negative $z$ power once a time. For the initial step $[\beta] = 0$ and $J^S([\beta] = 0) = e^{\bar t/z}$, from (\ref{IX}) we have extremal ray contributions:
$$
I_{[\beta] = 0} = e^{\hat t/z} (1 + O(1/z^{r + 1})).
$$
As there is no non-negative $z$ powers besides 1, also later inductive steps will create only higher order $q^{[\beta]}$'s with respect to $W$, hence the result follows.
\end{proof}

\begin{remark} \label{homg-deg}
By the virtual dimension count and (\ref{e:5.1}), $J$ is weighted homogeneous of degree 0 in the following weights $|\cdot|$: We set $|T_\mu|$ to be its Chow degree, $|t^\mu| = 1 - |T_\mu|$, $|q^\beta| = (c_1(X).\beta)$ and $|\psi| = |z| = 1$.
This is usually expressed as: The Frobenius manifold $(QH(X), *)$ is conformal with respect to the Euler vector field
$$
E = \sum (1 - |T_\mu|) t^\mu \p_\mu + c_1(X) \in \Gamma(TH).
$$
For the hypergeometric modification $I$, the base $J^S$ has degree 0 with $|q^{\beta_S}| = (c_1(S).\beta_S)$. But when $\beta_S$ is viewed as an object in $X$ the weight increases by $(c_1(X/S).\beta_S)$. This cancels with the weight of the factor $I^{X/S} q^{\beta - \beta_S}$, which is
\[
 \begin{split}
 &\quad -c_1(X/S).\beta + c_1(X).\beta - c_1(X).\beta_S \\
 &= c_1(S).\beta - c_1(X).\beta_S \\
 &= -c_1(X/S).\beta_S.
 \end{split}
\]
Hence \emph{$I$ is also homogeneous of degree 0}.
\end{remark}

\section{Extension of quantum $\D$ modules via quantum Leray--Hirsch}
In this section we will complete the proof of the main theorem
(Theorem \ref{main-thm}) on invariance of quantum rings under
ordinary flops of splitting type.
Proposition~\ref{p:6.11} guarantees the $\T$-invariance of the
Picard--Fuchs systems (in the fiber directions).
In order to construct the $\D$ module $\mathscr{M}_I = \D I$,
we will need to find the derivatives in the general base directions.
This will be accomplished by a lifting of the QDE on the base $S$.
Putting these together, we will show that they generate enough (correct) 
equations for $\mathscr{M}^X_I$.
This is referred as the quantum Leray--Hirsch theorem,
which is the content of Theorem \ref{thm:0.5}
($=$ Theorem \ref{I-lift} + Theorem \ref{q-LH} + Theorem \ref{Naturality}).

To obtain the (true) quantum $\D$-module $\mathscr{M}_J^X$
(on a sufficiently large Zariski closed subset given by the image of
$\tau(\hat t)$), we apply the Birkhoff factorization on $\mathscr{M}_I^X$.
We specifically choose a way to perform BF such that
the $\T$-invariance can be checked more naturally.

Before proceeding to the first step, let us lay out the notations
and conventions for this section.

\begin{notations} \label{n:1}
We use $\bar\beta \in NE(S)$, $\bar t \in H(S)$ etc.\ to denote objects in $S$.
When they are viewed as objects in $X$, $\bar\beta$ means the canonical lift,
$\bar t$ means the pullback $\bar p^*: H(S) \to H(X)$.

For a basis $\{\bar T_i\}$ of $H(S)$, denote
$\bar t = \sum \bar t^i \bar T_i$ a general element in $H(S)$.
When $\bar T_i$ is considered as an element in $H(X)$,
we sometimes abuse the notation by setting  $T_i := \bar T_i$.

Given a basis $\{\bar T_i\}$ of $H(S)$, we use the following
\emph{canonical basis} for $H(X)$:
$$\{T_\e= \bar T_i h^l \xi^m \mid 0\le l \le r, 0\le m \le r + 1\}.$$
A general element in $H(X)$ is denoted $t = \sum t^\e T_\e$.
The index set of the canonical basis is denoted $\Lambda^+$.

By abusing the notations, if $T_\e = \bar{T}_i$ (i.e.~$l=m=0$), we set
$t^\e = t^i = \bar t^i$.
Similarly we set $t^\e = t^1$ for $T_\e = h$, and $t^\e = t^2$ for $T_\e = \xi$.
That is, we reserve the index 0, 1 and 2 for $1$, $h$ and $\xi$ respectively.

On $H(X')$ the canonical basis is chosen to be
$$\{ T'_\e := \T T_\e = \bar T_i (\xi ' - h')^l \xi'^m \}$$
so that it shares the same coordinate system as $H(X)$:
$$
t = \sum_\e t^\e T_\e \mapsto \T t = \sum_\e t^\e \T T_\e = \sum_\e t^\e T'_\e.
$$
\end{notations}

\subsection{$I$-lifting of the Dubrovin connection}

Let the quantum differential equation of $QH(S)$ be given by
$$
z\p_i z\p_j J^S(\bar t) = \sum_k \bar C_{ij}^k (\bar t, \bar q)\,z\p_k J^S(\bar t).
$$
If we write $\bar C_{ij}^k(\bar t, \bar q) = \sum \bar C_{ij,\bar\beta}^k(\bar t)\, q^{\bar \beta}$, then the effect on the $\bar{\beta}$-components reads as
$$
z\p_i z\p_j J^S_{\bar \beta} = \sum_{k, \bar\beta_1} \bar C_{ij, \bar\beta_1}^k \,z\p_k J^S_{\bar\beta - \bar\beta_1}.
$$

Now we lift the equation to $X$. In the following, for a curve class $\bar \beta \in NE(S)$, its $I$-minimal lift in $NE(X)$ is denoted by $\bar\beta^I$.  We compute
\begin{equation} \label{lift-1}
\begin{split}
z\p_i z\p_j I &= \sum_{\beta} q^\beta e^{\frac{D}{z} + (D.\beta)} I^{X/S}_{\beta} z\p_i z\p_j J^S_{\bar \beta}\\
&= \sum_{k, \beta, \bar\beta_1} q^\beta e^{\frac{D}{z} + (D.\beta)} I^{X/S}_{\beta}  \bar C_{ij, \bar\beta_1}^k\,z\p_k J^S_{\bar\beta - \bar\beta_1} \\
&= \sum_{k, \bar\beta_1} q^{\bar\beta_1^I} e^{D.\bar\beta_1^I} \bar C_{ij, \bar\beta_1}^k z\p_k \sum_\beta q^{\beta - \bar\beta_1^I} e^{\frac{D}{z} + D.(\beta - \bar\beta_1^I)} I^{X/S}_\beta J^S_{\bar\beta - \bar\beta_1}.
\end{split}
\end{equation}
The terms in last sum are non-trivial only if
$\bar\beta - \bar\beta_1 \in NE(S)$.
However, in this presentation it is not a priori guaranteed that
$\beta - \bar\beta_1^I$ is $I$-effective.
(Hence, there might be some vanishing terms in the presentation.)

In order to obtain the RHS as an operator acting on $I$,
we will seek to ``transform'' terms of the form
$e^{\frac{D}{z} + D.(\beta - \bar\beta_1^I)} I^{X/S}_\beta J^S_{\bar\beta - \bar\beta_1}$
to those of the form
$e^{\frac{D}{z} + D.(\beta - \bar\beta_1^I)} I^{X/S}_{\beta - \bar\beta_1^I} J^S_{\bar\beta - \bar\beta_1}$.
This can be achieved by differentiation the RHS judiciously
and will be explained below.

As a first step, we will show that $I^{X/S}_{\beta} = 0$
if $\beta - \bar\beta_1^I \not\in NE^I(X)$ and
$\bar\beta - \bar\beta_1 \in NE(S)$.

\begin{definition}
For any one cycle $\beta \in A_1(X)$, effective or not,
we define
\[
 \begin{split}
 n_i(\beta) &:= -\beta.(h + L_i), \\
 n'_i(\beta) &:= -\beta.(\xi - h + L'_i), \\
 n'_{r + 1}(\beta) &:= -\beta.\xi,
 \end{split}
\]
where $0 \le i \le r$.
\end{definition}

\begin{lemma}
For $\bar\beta \in NE(S)$, the $I$-minimal lift $\bar\beta^I \in NE(X)$
satisfies $ n_i(\bar\beta^I) \ge 0$, $ n'_i(\bar\beta^I)\ge 0$ for all $i$.
\end{lemma}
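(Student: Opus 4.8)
The plan is to unwind the definitions and verify the two families of inequalities for the $I$-minimal lift $\bar\beta^I = \bar\beta - \mu^I_{\bar\beta}\ell - \nu^I_{\bar\beta}\gamma$ directly, using the intersection numbers recorded in \S\ref{I-function}. Recall that for a class $\beta = \beta_S + d\ell + d_2\gamma$ one has $\beta.a_i = d + \mu_i$, $\beta.b_i = d_2 - d + \mu'_i$ and $\beta.\xi = d_2$, where $\mu_i = (\beta_S.L_i)$, $\mu'_i = (\beta_S.L'_i)$; and here $a_i = h + L_i$, $b_i = \xi - h + L'_i$, $b_{r+1} = \xi$. For $\bar\beta^I$ the relevant parameters are $d = -\mu^I_{\bar\beta}$, $d_2 = -\nu^I_{\bar\beta}$, while the base intersection numbers $\mu_i = (\bar\beta.L_i)$, $\mu'_i = (\bar\beta.L'_i)$ are computed using the canonical lift (so $h.\bar\beta = \xi.\bar\beta = 0$).

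First I would verify $n_i(\bar\beta^I) \ge 0$, i.e.\ $\bar\beta^I.a_i \le 0$. This is $d + \mu_i = -\mu^I_{\bar\beta} + (\bar\beta.L_i) \le 0$, which holds precisely because $\mu^I_{\bar\beta} = \max_i (\bar\beta.L_i) \ge (\bar\beta.L_i)$ for every $i$ — this is just the definition of the max. Next, $n'_{r+1}(\bar\beta^I) \ge 0$ means $\bar\beta^I.\xi \le 0$, i.e.\ $d_2 = -\nu^I_{\bar\beta} \le 0$, which is immediate from $\nu^I_{\bar\beta} = \max\{\mu^I_{\bar\beta} + \mu'^I_{\bar\beta}, 0\} \ge 0$. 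Finally, for $n'_i(\bar\beta^I) \ge 0$ with $0 \le i \le r$, I need $\bar\beta^I.b_i = d_2 - d + \mu'_i \le 0$, i.e.\ $-\nu^I_{\bar\beta} + \mu^I_{\bar\beta} + (\bar\beta.L'_i) \le 0$. Since $(\bar\beta.L'_i) \le \mu'^I_{\bar\beta}$ by definition of the max, it suffices that $\mu^I_{\bar\beta} + \mu'^I_{\bar\beta} \le \nu^I_{\bar\beta}$, which again holds by definition of $\nu^I_{\bar\beta}$ as a max that includes $\mu^I_{\bar\beta} + \mu'^I_{\bar\beta}$ among its arguments.

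That disposes of all the inequalities; there is no real obstacle here, since each one reduces to the trivial observation that a maximum dominates each of its terms (and dominates $0$ in the case of $\nu^I$). The only point requiring a line of justification is that the hypothesis $\bar\beta \in NE(S)$ enters merely to guarantee $\bar\beta^I$ is a bona fide effective class in $NE(X)$ — which was already noted right after Definition~\ref{d:imlift} via $\mu^I_{\bar\beta} \le \mu_{\bar\beta}$, $\nu^I_{\bar\beta} \le \nu_{\bar\beta}$ — but the displayed sign inequalities themselves are purely numerical and would hold for the formula $\bar\beta - \mu^I_{\bar\beta}\ell - \nu^I_{\bar\beta}\gamma$ applied to any $\bar\beta \in A_1(S)$. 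So the proof is a short three-part check against the intersection table, and I would present it as such without further fuss.
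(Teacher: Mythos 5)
Your proof is correct and follows essentially the same route as the paper: unwind the definition of $\bar\beta^I$, plug in $d = -\mu^I_{\bar\beta}$, $d_2 = -\nu^I_{\bar\beta}$, and reduce each inequality to the fact that a maximum dominates each of its arguments. The only cosmetic difference is that the paper splits into the two cases $\mu^I + \mu'^I \ge 0$ and $\mu^I + \mu'^I < 0$ when verifying $n'_i \ge 0$, whereas you avoid the case split by chaining $\mu^I_{\bar\beta} + \mu_i' \le \mu^I_{\bar\beta} + \mu'^I_{\bar\beta} \le \nu^I_{\bar\beta}$ directly, which is a slight streamlining of the same argument.
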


\begin{proof}
During the proof, the \emph{superscript} $I$ is omitted for simplicity.

By definition,
$$ n_i= - \bar\beta^I.(h + L_i) = \mu - \mu_i \ge 0.$$
Similarly for $0 \le i \le r$,
$$
 n'_i = -\bar\beta^I.(\xi - h + L_i') =
 \max\{\mu + \mu', 0\} - \mu - \mu'_i.
$$
If $\mu + \mu' \ge 0$, we have
$$ n'_i = \mu' - \mu'_i \ge 0.$$
Otherwise if $\mu + \mu' < 0$, then we get
\begin{equation} \label{mu-gap}
 n'_i = 0 - (\mu + \mu'_i) \ge - (\mu + \mu') > 0.
\end{equation}
Finally for the compactification factor $\mathscr{O}$, we get
$$
 n'_{r+1} = - \bar\beta^I.\xi = \max \{\mu + \mu', 0\} \ge 0.
$$
\end{proof}

Let $\beta, \beta' \in A_1(X)$ be (not necessarily effective) one cycles.
By definition of $I$-function, the $\beta$ factor corresponding to $h + L_i$ is
\begin{equation*}
A_{i, \beta} = \frac{1}{\prod\limits_{m =
1}^{\beta.(h + L_i)} (h + L_i + mz)},
\end{equation*}
which depends only on the intersection number.
Suppose that
\[
 l_i:= \beta'.(h + L_i) - \beta.(h + L_i) \ge 0,
\]
we have
\begin{equation} \label{prod-comp}
 A_{i, \beta} =
 A_{i, \beta'} \prod_{m = \beta.(h + L_i) + 1}^{\beta'.(h + L_i)} (h + L_i + mz).
\end{equation}
We say that $A_{i, \beta}$ is a product of $A_{i, \beta'}$ with a
(cohomology-valued) factor of length $l_i$.
The factors corresponding to
$\xi - h + L_i'$ and $\xi$ behave similarly.

\begin{lemma}
Let $\beta \in NE(X)$ and
$\beta - \bar\beta_1^I$ be an $I$-effective class.
$I^{X/S}_\beta$ is the product of $I^{X/S}_{\beta - \bar\beta^I_1}$ with a factor
which is a product of length $n_i(\bar\beta^I_1)$,
$n_i'(\bar\beta^I_1)$, and $n'_{r + 1}(\bar\beta^I_1)$ corresponding to
$h + L_i$, $\xi - h + L_i'$, and $\xi$ respectively.

If $\beta - \bar\beta_1^I$ is not $I$-effective,
the conclusion holds in the sense that $I^{X/S}_\beta = 0$.
\end{lemma}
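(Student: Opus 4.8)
The plan is to compare $I^{X/S}_\beta$ and $I^{X/S}_{\beta-\bar\beta_1^I}$ factor by factor, using the product comparison formula \eqref{prod-comp} together with the non-negativity of the shift lengths $n_i(\bar\beta_1^I)$, $n'_i(\bar\beta_1^I)$, $n'_{r+1}(\bar\beta_1^I)$ established in the previous lemma. First I would fix $\beta\in NE(X)$ and set $\beta' = \beta$, $\beta'' = \beta - \bar\beta_1^I$ (keeping the latter $I$-effective for now). For each $i$, the intersection number of the linear factor with $\beta$ exceeds that with $\beta - \bar\beta_1^I$ precisely by
\[
 \beta.(h+L_i) - (\beta - \bar\beta_1^I).(h+L_i) = \bar\beta_1^I.(h+L_i) = -\,n_i(\bar\beta_1^I) \le 0,
\]
wait---this has the wrong sign; what we actually want is that $I^{X/S}_{\beta-\bar\beta_1^I}$ is the one whose factor is \emph{shorter}, so we apply \eqref{prod-comp} in the form where $A_{i,\beta-\bar\beta_1^I}$ is the product of $A_{i,\beta}$ with a length-$n_i(\bar\beta_1^I)$ factor, i.e.\ the numerator picks up the $n_i(\bar\beta_1^I)$ monomials $(h+L_i+mz)$ for $m$ running over the appropriate window. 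Equivalently, $A_{i,\beta}$ equals $A_{i,\beta-\bar\beta_1^I}$ times a factor in the \emph{denominator} of length $n_i(\bar\beta_1^I)\ge 0$. The same bookkeeping applies verbatim to the $\xi - h + L_i'$ factors (shift length $n'_i(\bar\beta_1^I)\ge 0$) and to the $\xi$ factor (shift length $n'_{r+1}(\bar\beta_1^I)\ge 0$). Multiplying these across all $i$ gives the claimed product decomposition of $I^{X/S}_\beta$ over $I^{X/S}_{\beta-\bar\beta_1^I}$, once one also checks that the overall power of $z$ matches---but this is automatic since \eqref{IX} records $z^{-c_1(X/S).\beta}$ and $c_1(X/S)$ is linear in $\beta$.

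For the degenerate case, suppose $\beta - \bar\beta_1^I$ is \emph{not} $I$-effective while $\beta = (\beta - \bar\beta_1^I) + \bar\beta_1^I \in NE(X)$ with $\bar\beta - \bar\beta_1 \in NE(S)$. I would invoke Lemma~\ref{I-non-van}: since $\beta - \bar\beta_1^I \notin NE^I(X)$, either its $\ell$-coefficient is $< -\mu^I_{(\bar\beta - \bar\beta_1)}$, forcing $A_{\beta - \bar\beta_1^I}$ (and hence the relevant Chern-polynomial factor) to vanish by the Vanishing Lemma~\ref{non-ef}, or else the $\gamma$-coefficient is too negative and $B_{\beta - \bar\beta_1^I}C_{\beta - \bar\beta_1^I}$ contains the full Chern polynomial $f_{N\oplus\mathscr{O}} = 0$ in the numerator. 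In either subcase, I then need to transfer the vanishing back to $I^{X/S}_\beta$: the point is that the length-$n_i$, $n'_i$, $n'_{r+1}$ comparison factors only introduce linear terms $(v + mz)$ with $m\ge 1$, none of which can be the $m=0$ term that carries the Chern root $v$; hence the numerator of $I^{X/S}_\beta$ still contains whichever Chern-polynomial factor forced $I^{X/S}_{\beta-\bar\beta_1^I} = 0$, and therefore $I^{X/S}_\beta = 0$ as well. This uses crucially that the shift lengths are all $\ge 0$, so the comparison never \emph{removes} a numerator factor from $I^{X/S}_{\beta - \bar\beta_1^I}$.

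The step I expect to be the main obstacle is the careful sign/window bookkeeping in \eqref{prod-comp} when some intersection numbers are negative: the directed product $\prod_{m=1}^{s}$ with $s \le 0$ is defined as a ratio, so $A_{i,\beta}$ can legitimately be a numerator (rather than denominator) expression, and one must verify that the length-$n_i(\bar\beta_1^I)$ comparison factor is genuinely a \emph{polynomial} in the Chern roots and $z$ (never a rational function) regardless of whether $\beta.(h+L_i)$ and $(\beta - \bar\beta_1^I).(h+L_i)$ are individually positive, zero, or negative. This reduces to the elementary observation that $n_i(\bar\beta_1^I) = \bigl((\beta - \bar\beta_1^I).(h+L_i)\bigr)^{-1}$-type difference is nonnegative and that the directed product over a window of nonnegative length is always polynomial; I would isolate this as a one-line remark about directed products rather than belabor the case analysis. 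Everything else is formal: linearity of $c_1(X/S)$ handles the $z$-power, and the previous lemma hands us the nonnegativity of all three families of shift lengths for free.
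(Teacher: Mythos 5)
Your proposal uses the same ingredients and overall strategy as the paper's proof---apply \eqref{prod-comp} with $\beta' = \beta - \bar\beta_1^I$, appeal to the non-negativity of the shift lengths $n_i$, $n'_i$, $n'_{r+1}$ from the preceding lemma, and observe that \eqref{prod-comp} holds formally so the vanishing propagates. However, two details in the write-up are wrong and, read literally, would undermine the vanishing argument.

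First, the direction of \eqref{prod-comp} is stated backwards. Setting $\beta' = \beta - \bar\beta_1^I$ in \eqref{prod-comp} gives
\[
 A_{i,\beta} \;=\; A_{i,\,\beta - \bar\beta_1^I}\, \prod_{m=\beta.(h+L_i)+1}^{(\beta-\bar\beta_1^I).(h+L_i)} (h + L_i + mz),
\]
a factor of length $n_i(\bar\beta_1^I)\ge 0$ multiplying $A_{i,\beta-\bar\beta_1^I}$, not the other way around. You first say ``$A_{i,\beta-\bar\beta_1^I}$ is the product of $A_{i,\beta}$ with a length-$n_i$ factor'' and then ``equivalently $A_{i,\beta}$ equals $A_{i,\beta-\bar\beta_1^I}$ times a factor in the \emph{denominator}''---both put the comparison factor on the wrong side. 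The correct relation puts a \emph{polynomial} factor in the numerator, so $I^{X/S}_\beta$ is a polynomial multiple of $I^{X/S}_{\beta-\bar\beta_1^I}$, and the vanishing $I^{X/S}_{\beta-\bar\beta_1^I}=0$ (via $f_F$ or $f_{N\oplus\mathscr{O}}$ sitting in its numerator) passes directly to $I^{X/S}_\beta$. No window analysis is needed.

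Second, the claim that the comparison factors ``only introduce linear terms $(v+mz)$ with $m\ge 1$'' is false. The window $m\in\{\beta.(h+L_i)+1,\dots,(\beta-\bar\beta_1^I).(h+L_i)\}$ can perfectly well contain $m=0$ or negative $m$ whenever $\beta.(h+L_i)<0$, which happens in the vanishing case (indeed there the whole window is negative). You rely on this $m\ge1$ claim precisely because, having the direction of the product wrong, you need to rule out the denominator cancelling the Chern root. Once the direction is corrected there is nothing to rule out: multiplying a zero by a polynomial is zero, which is exactly how the paper closes the argument (``notice that \eqref{prod-comp} holds formally'').
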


\begin{proof}
Set $\beta' = \beta - \bar\beta_1^I$ in \eqref{prod-comp},
the length is
$$(\beta' -\beta).(h + L_i) = -\bar\beta_1^I.(h + L_i) = n_i(\bar\beta^I_1).$$
The argument for $\xi - h + L_i'$ and $\xi$ are similar.

If $\beta - \bar\beta_1^I$ is not $I$-effective,
formally $I^{X/S}_{\beta - \bar\beta_1^I} = 0$ contains either the Chern polynomial
$f_F$ or $f_{N \oplus \mathscr{O}}$ in its numerator.
Notice that (\ref{prod-comp}) holds formally.

This proves the lemma.
\end{proof}

our next step is to show that the factors in (\ref{prod-comp})
can be obtained by introducing certain differential operators acting on $I$.

\begin{definition} \label{D-op}
An one cycle $\beta \in A_1(X)$ is called \emph{admissible} if $n_i(\beta) \ge 0$, $n'_i(\beta) \ge 0$, and $n'_{r + 1}(\beta) \ge 0$. For admissible $\beta$ we define differential operators
\begin{equation*}
\begin{split}
D^A_{\beta} &:= \prod_{i = 0}^r\prod_{m = 0}^{n_i(\beta) - 1} (z\p_{h + L_i} - mz), \\
D^B_{\beta} &:= \prod_{i = 0}^r\prod_{m = 0}^{n'_i(\beta) - 1} (z\p_{\xi - h + L'_i} - mz), \\
D^C_{\beta} &:= \prod_{m = 0}^{n'_{r + 1}(\beta) - 1} (z\p_\xi - mz), \\
D_{\beta}(z) &:= D^A_{\beta}D^B_{\beta}D^C_{\beta}.
\end{split}
\end{equation*}
\end{definition}

Now we are ready to lift the quantum differential equations for $J^S$ to
equations for $I^X$.

\begin{theorem} [($I$-lifting of QDE)] \label{I-lift}
The Dubrovin connection on $QH(S)$ can be lifted to $H(X)$ as
\begin{equation} \label{Dubrovin-lift}
z\p_i z\p_j I = \sum_{k, \bar\beta} q^{\bar\beta^*} e^{D.\bar\beta^*}\bar C_{ij, \bar\beta}^k(\bar t)\, z\p_k D_{\bar\beta^*}(z) I
\end{equation}
where $\bar\beta^* \in A_1(X)$ is any admissible lift of $\bar\beta$,
which in particular implies the well-definedness of the operators
$D_{\bar\beta^*}(z)$.

Furthermore, one can always choose $\bar\beta^*$ to be effective.
An example of an effective lift is the $I$-minimal lift
$\bar\beta^* = \bar\beta^I$,
which is the only admissible lift if and only if $\mu + \mu' \ge 0$.

In general, all liftings are related to each other modulo the
Picard--Fuchs system generated by $\Box_\ell$ and $\Box_\gamma$.
\end{theorem}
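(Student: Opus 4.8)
The plan is to establish \eqref{Dubrovin-lift} by applying to the last line of \eqref{lift-1} the lemmas just proved: namely that for admissible $\bar\beta_1^*$ the differential operator $D_{\bar\beta_1^*}(z)$ reconstructs, when applied to $I$, exactly the length-$n_i(\bar\beta_1^*)$, $n_i'(\bar\beta_1^*)$, $n'_{r+1}(\bar\beta_1^*)$ cohomology-valued factors by which $I^{X/S}_{\beta}$ differs from $I^{X/S}_{\beta - \bar\beta_1^*}$, as in \eqref{prod-comp}. First I would fix $\bar\beta$ and an admissible lift $\bar\beta^* =: \bar\beta_1^*$, and compute $z\p_k D_{\bar\beta^*}(z) I = \sum_\beta q^{\beta} e^{D/z + (D.\beta)} z\p_k\big(\text{(product factor)}\, I^{X/S}_{\beta - \bar\beta^*}\big) J^S_{\bar\beta - \bar\beta_1}$; the key point is that each elementary operator $z\p_{h+L_i} - mz$ acting on $e^{D/z+(D.\beta)} A_{i,\beta-\bar\beta^*}\cdots$ produces precisely the factor $(h + L_i + ((\beta-\bar\beta^*).( h+L_i) + m + 1)z)$ needed to pass from $A_{i,\beta-\bar\beta^*}$ up one step, because $z\p_v$ acting on $I$ inserts $v + z(v.\beta')$ exactly as in the proof of Proposition~\ref{PF-eq-X}. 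Matching the range $m = 0,\dots,n_i(\bar\beta^*)-1$ against the product range in \eqref{prod-comp} then converts $I^{X/S}_{\beta-\bar\beta^*}$ into $I^{X/S}_\beta$, and after reindexing $\beta \mapsto \beta - \bar\beta^*$ and absorbing $q^{\bar\beta^*} e^{D.\bar\beta^*}$, the right-hand side of \eqref{Dubrovin-lift} becomes identically the last line of \eqref{lift-1}, which equals $z\p_i z\p_j I$.

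Next I would address the vanishing/effectivity subtleties flagged in the text. The operator identity above is formal in the sense of \eqref{prod-comp}, so it holds term-by-term regardless of whether $\beta - \bar\beta^*$ is $I$-effective: when it is not, Lemma~\ref{I-non-van} (via $A_{\beta-\bar\beta^*}=0$ or $B_{\beta-\bar\beta^*}C_{\beta-\bar\beta^*}$ containing $f_{N\oplus\mathscr O}$) gives $I^{X/S}_{\beta-\bar\beta^*}=0$, and correspondingly $I^{X/S}_\beta = 0$ by the previous lemma, so both sides vanish. The terms with $\bar\beta - \bar\beta_1 \notin NE(S)$ drop out because $J^S_{\bar\beta-\bar\beta_1}=0$. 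This shows \eqref{Dubrovin-lift} is independent of which admissible lift $\bar\beta^*$ is chosen, which in particular forces $D_{\bar\beta^*}(z)$ to be well-defined (its definition requires $n_i,n_i',n_{r+1}'\ge 0$, i.e.\ admissibility).

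For the assertion that one can choose $\bar\beta^*$ effective, and that $\bar\beta^* = \bar\beta^I$ works, I would invoke the lemma stating $n_i(\bar\beta^I)\ge 0$, $n_i'(\bar\beta^I)\ge 0$, $n'_{r+1}(\bar\beta^I)\ge 0$: the $I$-minimal lift is admissible and is effective by Definition~\ref{d:imlift}. To see it is the \emph{only} admissible lift exactly when $\mu + \mu' \ge 0$: any other lift differs from $\bar\beta^I$ by $a\ell + b\gamma$ with $a,b \in \mathbb Z$; admissibility of $\bar\beta^I + a\ell + b\gamma$ translates (using the intersection numbers $\ell.(h+L_i) = 1$, $\gamma.\xi = 1$, etc., from Section~\ref{s:6}) into inequalities of the form $n_i(\bar\beta^I) - a \ge 0$, $n_i'(\bar\beta^I) + a - b \ge 0$, $n'_{r+1}(\bar\beta^I) - b \ge 0$, and from the explicit values $n_i(\bar\beta^I) = \mu - \mu_i$, $n'_{r+1}(\bar\beta^I) = \max\{\mu+\mu',0\}$, together with \eqref{mu-gap}, one checks these leave no room for $(a,b)\ne(0,0)$ precisely when $\mu+\mu'\ge 0$ and otherwise allow a genuine interval (the ``gap'' in \eqref{mu-gap}).

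Finally, the statement that in general all liftings are related modulo the Picard--Fuchs system: two admissible lifts $\bar\beta^*_{(1)}, \bar\beta^*_{(2)}$ give two expressions for $z\p_i z\p_j I$ that are equal, so their difference is a relation among the operators $q^{\bar\beta^*} e^{D.\bar\beta^*} z\p_k D_{\bar\beta^*}(z)$ annihilating $I$; since the two lifts differ by multiples of $\ell$ and $\gamma$, the difference of the corresponding $D_{\bar\beta^*}(z)$-operators factors through shifts by $\ell$, $\gamma$, and these shifts are implemented on $I$ exactly by $\Box_\ell$, $\Box_\gamma$ (by their defining property $\Box_\ell I = \Box_\gamma I = 0$ in Proposition~\ref{PF-eq-X}, whose proof shows $q^\ell e^{t^1}\prod_j z\p_{b_j}$ and $\prod_j z\p_{a_j}$ act as the same shift $\beta \mapsto \beta - \ell$ on $I$). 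I expect the main obstacle to be the bookkeeping in the first paragraph: carefully tracking how the product of elementary operators $\prod_i\prod_m(z\p_{h+L_i}-mz)$ reproduces the \emph{exact} cohomology-valued factor of \eqref{prod-comp} — including getting the shift in the $z$-coefficient $m \leftrightarrow (\beta-\bar\beta^*).(h+L_i)+m+1$ right and confirming that the cross-terms (where $z\p_k$ or one $z\p_v$ hits a previously produced factor rather than $I^{X/S}$) either cancel or are accounted for — rather than any conceptual difficulty.
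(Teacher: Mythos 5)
Your proposal follows essentially the same route as the paper's proof: read off the formula from the last line of \eqref{lift-1} by checking that $D_{\bar\beta^*}(z)$ applied to the shifted series reproduces the product factors in \eqref{prod-comp}; observe the vanishing when $\beta - \bar\beta^*$ fails to be $I$-effective; verify admissibility of $\bar\beta^I$ and rigidity when $\mu+\mu'\ge 0$ from the explicit formulas $n_i(\bar\beta^I)=\mu-\mu_i$, $n'_i(\bar\beta^I)=\mu'-\mu'_i$; and relate any two lifts by $\pm\ell$, $\pm\gamma$ steps tied to $\Box_\ell$, $\Box_\gamma$. Two small items are glossed relative to the paper's argument. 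The factor formula in your first paragraph has an index slip: $(z\p_{h+L_i}-mz)$ acting on the term carrying $e^{D.(\beta-\bar\beta^*)}$ produces $h+L_i+\bigl((\beta-\bar\beta^*).(h+L_i)-m\bigr)z$, and the range $m=0,\dots,n_i(\bar\beta^*)-1$ then yields exactly the factors from $m=\beta.(h+L_i)+1$ to $(\beta-\bar\beta^*).(h+L_i)$ in \eqref{prod-comp} — not $+m+1$ — but this is precisely the bookkeeping you flag. For the ``related modulo Picard--Fuchs'' claim, you should make explicit that any two admissible lifts can be joined by a path of $\pm\ell$, $\pm\gamma$ steps \emph{all of whose intermediate points are admissible} (so the intermediate $D$-operators exist), and that after using $\Box_\ell I = 0$ one must commute $q^\ell e^{t^1}$ past the remaining differential factors via $z\p_h e^{t^1} = e^{t^1}(z\p_h + z)$ to land on $q^\ell e^{t^1} D_{\beta'+\ell}(z)I$; both are elementary but are needed to close the argument.
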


\begin{proof}
We apply the calculation in (\ref{lift-1}) with $\bar\beta^I_1$ being replaced by a general admissible lift $\bar\beta^*_1$. For $\bar t = \bar t_1 + \bar t_2$ with $\bar t_1$ being the divisor part,
\begin{align*}
&\sum_\beta q^{\beta - \bar\beta_1^*} e^{\frac{D}{z} + D.(\beta - \bar\beta_1^*)} I^{X/S}_\beta J^S_{\bar\beta - \bar\beta_1}(\bar t) \\
& \qquad = \sum_\beta D_{\bar\beta^*_1}(z) q^{\beta - \bar\beta_1^*} e^{\frac{D + \bar t_1}{z} + (D + \bar t_1).(\beta - \bar\beta_1^*)} I^{X/S}_{\beta - \bar\beta_1^*} J^S_{\bar\beta - \bar\beta_1}(\bar t_2) = D_{\bar\beta^*_1}(z) I.
\end{align*}

Now we prove the last statement. Any two (admissible) lifts differ by some $a\ell + b\gamma$. Say, $\beta'' = \beta' + a\ell + b\gamma$. Then we have
\begin{equation} \label{adm-step}
\begin{split}
n_i(\beta'') &= n_i(\beta') - a, \\
n'_i(\beta'') &= n'_i(\beta') + (a - b), \\
n'_{r + 1}(\beta'') &= n'_{r + 1}(\beta') - b.
\end{split}
\end{equation}
Then it is elementary to see that we may connect $\beta'$ to $\beta''$ by adding or subtracting $\ell$ or $\gamma$ once a time, with all the intermediate steps $\beta'_j$ being admissible. For example, if $a > 0$, $b > 0$ and $a - b > 0$, then we start by adding $\ell$ up to $j = a - b$ times. Then we iterate the process: Adding $\gamma$ followed by adding $\ell$, up to $b$ times. Thus we only have to consider the two cases (1) $\beta'' = \beta' + \ell$ or (2) $\beta'' = \beta' + \gamma$.

For case (1), we get from (\ref{adm-step}) with $(a, b) = (1, 0)$ that $n_i(\beta') \ge 1$ for all $i$. This implies that $D^A_{\beta'} = D^{A+}_{\beta'} D^A_0$ where $D^A_0 = \prod_{j = 0}^r z\p_{a_j}$ comes from the product of $m = 0$ terms. Since $\Box_\ell I = 0$, we compute
$$
D_{\beta'}(z) I = D^B_{\beta'} D^C_{\beta'} D^{A+}_{\beta'} q^\ell e^{t^1} \prod_{j = 0}^r z\p_{b_j} I.
$$
Now we move $q^\ell e^{t^1}$ to the left hand side of all operators by noticing
$$
z\p_h e^{t^1} = e^{t^1} (z\p_h + z)
$$
in the operator sense. Then (notice that $D^C_{\beta'} = D^C_{\beta' + \ell}$)
$$
D_{\beta'}(z) I = q^\ell e^{t^1} D^{B+}_{\beta' + \ell} D^C_{\beta'} D^A_{\beta' + \ell} \prod_{j = 0}^r z\p_{b_j} I = q^\ell e^{t^1} D_{\beta' + \ell}(z) I,
$$
which is the desired factor for $\beta''$.

The proof for case (2) is entirely similar, with $\Box_\gamma I = 0$ being used instead, and is thus omitted.

The uniqueness statement for $\mu + \mu' \ge 0$ follows from (\ref{adm-step}) and the observation: $n_i(\bar\beta^I) = \mu - \mu_i$ and $n'_i(\bar\beta^I) = \mu' - \mu'_i$, both attain 0 somewhere and there is no room to move around.
The proof is complete.
\end{proof}

Notice that the liftings of QDE may not be unique. We will see the importance of such a freedom when we discuss the $\T$-invariance property.

\subsection{Quantum Leray--Hirsch}

\begin{definition} \label{d:7.7}
Let $T_\e = \bar{T}_i h^l \xi^m$ be an element in the canonical basis of $H(X)$.
The \emph{naive quantization} of $T_\e$ is defined as (c.f.~(\ref{zp}) and (\ref{naive-q}))
\[
\hat T_\e := \p^{z\e} = z \p_{\bar{t}^i} (z \p_{t^1})^l (z \p_{t^2})^m.
\]
\end{definition}

\begin{theorem} [(Quantum Leray--Hirsch)] \label{q-LH}
The $I$-lifting (\ref{Dubrovin-lift}) of quantum differential equations on $S$
and the Picard--Fuchs equations determine a first order matrix system under
the naive quantization $\p^{z\e}$ of canonical basis $T_\e$'s of $H(X)$:
$$
z\p_a (\p^{z\e} I) = (\p^{z\e} I) C_a(z, q), \qquad t^a \in \{t^1, t^2, \bar t^i\}.
$$

This system has the property that for any fixed $\bar\beta \in NE(S)$,
the coefficients are formal functions in $\bar t$ and polynomial functions
in $q^\gamma e^{t^2}$, $q^\ell e^{t^1}$ and the basic rational
function $\f(q^\ell e^{t^1})$, defined in (\ref{rat-f}).
\end{theorem}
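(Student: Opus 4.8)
\emph{Plan.} The plan is to build the matrices $C_a$ by a reduction procedure that rewrites each $z\p_a(\p^{z\e}I)$ as an $\mathcal R$-combination of the $N$ functions $\{\p^{z\e'}I\}_{\e'\in\Lambda^+}$, using only relations already at our disposal. Since $z\p_{t^1}$, $z\p_{t^2}$ and the $z\p_{\bar t^i}$ all commute, I would first normal-order $z\p_a\p^{z\e}$ as (a product of base derivatives)$\,\cdot(z\p_{t^1})^l(z\p_{t^2})^m$; if the resulting index lies in the canonical range $\Lambda^+$ this \emph{is} some $\p^{z\e'}$ and nothing is needed, while otherwise exactly one of three things has occurred: a second base derivative has appeared, in which case I invoke the $I$-lifted QDE \eqref{Dubrovin-lift}; or $l=r+1$, in which case I invoke $\Box_\ell I=0$ of Proposition~\ref{PF-eq-X}; or $m=r+2$, in which case I invoke $\Box_\gamma I=0$. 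These reductions are organized by an outer induction on the class $w=(\bar\beta,d_2)$ in the quotient Mori cone $W\subset NE(S)\oplus\mathbb Z$ (as in the proof of Lemma~\ref{B=1}), within which the remaining freedom is a power series in $q^\ell$ — which is exactly the series governed by $\Box_\ell$.

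\emph{The three reductions and where the coefficients come from.} A second base derivative is handled by $z\p_{\bar t^{i'}}\p^{z\e}I=(z\p_{t^1})^l(z\p_{t^2})^m\,z\p_{i'}z\p_i\,I$ together with \eqref{Dubrovin-lift}: its $\bar\beta_1=0$ term is the classical cup product $\sum_k\bar C^{k}_{i'i,0}z\p_k$, which turns two base derivatives into one and lands on $\bar T_kh^l\xi^m$, while the $\bar\beta_1\ne 0$ terms carry $q^{\bar\beta_1^I}e^{D.\bar\beta_1^I}$ — strictly larger in $W$, with non-negative $\ell,\gamma$-exponents since $\bar\beta_1^I=\bar\beta_1^X+(\mu_{\bar\beta_1}-\mu^I_{\bar\beta_1})\ell+(\nu_{\bar\beta_1}-\nu^I_{\bar\beta_1})\gamma$ — and the operator $z\p_kD_{\bar\beta_1^I}(z)$, and they are disposed of by the outer induction followed by the remaining two reductions; this is the only place $q^{\bar\beta}$ enters, through the formal-in-$\bar t$ structure constants $\bar C^{k}_{ij,\bar\beta}(\bar t)$ of $QH(S)$. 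For $\Box_\gamma$, writing $z\p_{b_j}=z\p_{t^2}-z\p_{t^1}+z\p_{L'_j}$ gives $(z\p_{t^2})^{r+2}I=q^\gamma e^{t^2}I-z\p_{t^2}\bigl(\prod_j z\p_{b_j}-(z\p_{t^2})^{r+1}\bigr)I$, whose first term drops by one in $d_2$ (known by induction) and whose second has strictly smaller $t^2$-order; since $(z\p_{t^2})^{r+2}$ occurs only on the left, a $\Box_\gamma$-step multiplies coefficients merely by $q^\gamma e^{t^2}$, whence the \emph{polynomial} dependence on $q^\gamma e^{t^2}$. For $\Box_\ell$, the same expansion applied to $\prod z\p_{a_j}$ and to $\prod z\p_{b_j}$ shows the top-$t^1$-order parts on the two sides are $(z\p_{t^1})^{r+1}$ and $(-1)^{r+1}q^\ell e^{t^1}(z\p_{t^1})^{r+1}$, so that $(z\p_{t^1})^{r+1}$ reappears on the right and a geometric resummation is forced; using
\[
\frac{1}{1-(-1)^{r+1}q^\ell e^{t^1}}\;=\;1+(-1)^{r+1}\f(q^\ell e^{t^1}),
\]
a $\Box_\ell$-step expresses $(z\p_{t^1})^{r+1}I$ through operators of $t^1$-order $\le r$ (with base derivatives and $z\p_{t^2}$'s of order $\le r+1$) applied to $I$, the coefficients being polynomial of degree one in $\f(q^\ell e^{t^1})$ — the unique source of $\f$. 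Finally, in all three cases the base derivatives spawned ($z\p_{L_j}$, $z\p_{L'_j}$, and those inside $D_{\bar\beta^*}(z)$) collapse, modulo higher $\bar\beta$, to single base derivatives by iterating the classical cup product, and, being attached to $t^1$-orders $\le r$ and $t^2$-orders $\le r+1$, land back on canonical basis elements $\bar T_\kappa h^l\xi^m$.

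\emph{Assembly and the main obstacle.} Once every $z\p_a(\p^{z\e}I)$ has been written as an $\mathcal R$-combination of $\{\p^{z\e'}I\}$, the $C_a$ are its coefficient matrices; uniqueness follows because the Novikov-leading terms $T_\e e^{\hat t/z}$ of the $\p^{z\e}I$ form a basis, and the coefficient shape in the theorem is read off as above (with the spurious exponentials $e^{-\mu_{\bar\beta}t^1},e^{-\nu_{\bar\beta}t^2}$ coming from $q^{\bar\beta^I}e^{D.\bar\beta^I}$ recombining with $q^{\bar\beta}$ into the asserted monomials). The hard part, I expect, is not any individual reduction but showing the procedure \emph{closes up}: $\Box_\ell$ and $\Box_\gamma$ feed into one another — a $\Box_\ell$-step can add as much as $r+1$ to the $t^2$-order and a $\Box_\gamma$-step as much as $r+1$ to the $t^1$-order — and no linear weighting of the two excesses strictly decreases, so ``keep reducing'' does not obviously terminate. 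I would handle this by treating, at each fixed $w$ and each fixed total order, the finitely many out-of-range monomials as a single linear system: after adjoining $\f(q^\ell e^{t^1})$, i.e.\ inverting $1-(-1)^{r+1}q^\ell e^{t^1}$, the operator $\Box_\ell$ has $t^1$-symbol $(z\p_{t^1})^{r+1}$ with invertible leading coefficient and $\Box_\gamma$ has $t^2$-symbol $(z\p_{t^2})^{r+2}$ with unit leading coefficient, and together with the relations from the $I$-lifted QDE they should form a Gröbner-type system whose standard monomials are precisely $\Lambda^+$; alternatively one can short-circuit this by invoking the Birkhoff factorization of Theorem~\ref{BF/GMT}, which already identifies $\D I$ with the rank-$N$ module $\D J$, so that $\sum_\e\mathcal R\,\p^{z\e}I$ must be all of it and in particular closed under $z\p_a$, leaving only the coefficient shape to be pinned down by the three reductions. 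Either route should also yield that the $C_a$ are polynomial of \emph{bounded} degree in $q^\gamma e^{t^2}$, $q^\ell e^{t^1}$ and $\f(q^\ell e^{t^1})$ at each $w$.
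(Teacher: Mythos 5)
Your proposal matches the architecture of the paper's argument: a case analysis reducing each $z\p_a\p^{z\e}I$ to the canonical basis via exactly the same three moves — the $I$-lifted QDE for excess base derivatives, $\Box_\ell$ for a $t^1$-excess, $\Box_\gamma$ for a $t^2$-excess — an outer induction over $NE(S)$, and a geometric resummation that produces $\f(q^\ell e^{t^1})$ precisely where you say it does. The coefficient bookkeeping (formal in $\bar t$ from $\bar C^k_{ij,\bar\beta}$, polynomial in $q^\gamma e^{t^2}$ and $q^\ell e^{t^1}$ from the effective liftings, $\f$ only from $\Box_\ell$) is also the paper's.

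Where you diverge is on termination, and there you are more cautious than the situation requires. You correctly observe that a generic pair of box operators, one raising $t^2$-order by up to $r+1$ per $\Box_\ell$-step and the other raising $t^1$-order by up to $r+1$ per $\Box_\gamma$-step, has no obvious decreasing weight, and you therefore fall back on either a Gr\"obner-type argument or the rank-$N$ identification of $\D I$ with $\D J$ via Birkhoff. Neither fallback is needed. The paper's proof first works out the $S=\mathrm{pt}$ case completely (equations \eqref{hr+1}--\eqref{xir+2+}), and the point these calculations make is that the two operators share a common factor: $\Box_\ell=\prod z\p_{a_j}-q^\ell e^{t^1}\prod z\p_{b_j}$ and $\Box_\gamma=z\p_\xi\prod z\p_{b_j}-q^\gamma e^{t^2}$ both contain $\prod z\p_{b_j}$. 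Consequently a $\Box_\ell$-step that produces $(z\p_{t^1})^{r+1}(z\p_{t^2})^j$ with $j\ge1$ is immediately annihilated by one $\Box_\gamma$-step: $(z\p_{t^1})^{r+1}(z\p_{t^2})^j\equiv q^\ell e^{t^1}(z\p_{t^2})^{j-1}\,z\p_{t^2}\prod z\p_{b_j}\equiv q^\ell e^{t^1}q^\gamma e^{t^2}(z\p_{t^2}+z)^{j-1}$, a scalar times an in-range monomial. So the fiber-direction chain terminates in at most two steps, not by any abstract Noetherian argument. With that observed, the induction over $\bar\beta\in NE(S)$ (not over the pair $(\bar\beta,d_2)$, which is what you use) finishes: each use of \eqref{Dubrovin-lift} either increases $\bar\beta$ or, at $\bar\beta_1=0$, strictly increases the Chow degree of the base class, and both are bounded. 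The paper explicitly remarks that Gr\"obner/GKZ machinery, which it cites for context, is \emph{not} used precisely because of this special closure. Your Birkhoff route would give the existence of the matrices $C_a$ (as you note) but not the shape of the coefficients, and is therefore the weaker of your two fallbacks; in the paper's logic it would also be somewhat circular, since the point of the quantum Leray--Hirsch theorem is to obtain a $\T$-invariant presentation of the system \emph{from which} one then derives the $\T$-invariance of the Birkhoff factorization, rather than the other way around.
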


We start with an overview of the general ideas involved in the proof.
The Picard--Fuchs system generated by $\Box_\ell$ and $\Box_\gamma$ is a
perturbation of the Picard--Fuchs (hypergeometric) system associated
to the (toric) fiber by operators in base divisors.
The fiberwise toric case is a GKZ system,
which by the theorem of Gelfand--Kapranov--Zelevinsky is a holonomic system
of rank $(r + 1)(r + 2)$, the dimension of cohomology space of a fiber.
It is also known that the GKZ system admits a Gr\"obner basis reduction
to the holonomic system.

We apply this result in the following manner:
We will construct a $\D$ module with basis $\p^{z\e}$, $\e \in \Lambda^+$.
We apply operators $z\p_{t^1}$, $z\p_{t^2}$ and first order operators $z\p_i$'s
to this selected basis. Notice that
\begin{align*}
\Box_\ell &= (1 - (-1)^{r + 1}q^\ell e^{t^1}) (z\p_{t^1})^{r + 1} + \cdots, \\
\Box_\gamma &= (z\p_{t^2})^{r + 2} + \cdots.
\end{align*}
The Gr\"obner basis reduction allows one to reduce the differentiation order
in $z\p_{t^1}$ and $z\p_{t^2}$ to smaller one.
In the process higher order differentiation in $z\p_i$'s will be introduced.
Using the $I$-lifting, the differentiation in the base direction with order
higher than one can be reduced to one by introducing more terms with strictly
larger effective classes in $NE(S)$.
A refinement of these observations will lead to a proof,
which is presented below.

\begin{remark}
In fact, neither the Gr\"obner basis nor the GKZ theorem will be needed,
due to the simple feature of the Picard--Fuchs system we have for split
ordinary flops.
\end{remark}

\begin{proof}
Consider first the case of simple $P^r$ flops ($S = {\rm pt}$).
In this special case the Gr\"obner basis is already at hand.
The naive quantization of canonical cohomology basis gives
$$
 \p^{z(i, j)} := (z\p_{t^1})^i (z\p_{t^2})^j, \qquad 0 \le i \le r,
 \quad 0 \le j \le r  +1.
$$
Then further differentiation in the $t^1$ direction leads to
$$
z\p_{t^1} \p^{z(i, j)} = \p^{z(i + 1, j)}.
$$
It is clear that we only need to deal with the boundary case $i = r$,
when the RHS goes beyond the standard basis.

\textbf{Case $(i,j)=(r,0)$}.
The equation $\Box_\ell = (z\p_{t^1})^{r + 1} - q^\ell e^{t^1}(z\p_{t^2} - z\p_{t^1})^{r + 1} \equiv 0$ modulo $I$ leads to
\begin{equation} \label{hr+1}
(z\p_{t^1})^{r + 1} \equiv \frac{q^\ell e^{t^1}}{1 - (-1)^{r + 1}q^\ell e^{t^1}} \sum_{k = 1}^{r + 1} C^{r + 1}_k (z\p_{t^2})^k (-z\p_{t^1})^{r + 1 - k},
\end{equation}
which solves the case.

\textbf{Case $(i,j)=(r, \ge 1)$}.
For $j \ge 1$, notice that $\Box_\gamma = z\p_{t^2} (z\p_{t^2} - z\p_{t^1})^{r + 1} - q^\gamma e^{t^2}\equiv 0$ modulo $I$. Hence
\begin{equation} \label{close-up}
\begin{split}
(z\p_{t^1})^{r + 1} (z\p_{t^2})^j &= q^\ell e^{t^1} (z\p_{t^2})^j (z\p_{t^2} - z\p_{t^1})^{r + 1} \\
&\equiv q^\ell e^{t^1} (z\p_{t^2})^{j - 1} q^\gamma e^{t^2}\\
&= q^\ell e^{t^1} q^\gamma e^{t^2} (z\p_{t^2} + z)^{j - 1}.
\end{split}
\end{equation}
This in particular solves the other cases $1 \le j \le r + 1$.

Similarly differentiation in the $t^2$ direction:
$$
z\p_{t^2} \p^{z(i, j)} = \p^{z(i, j + 1)}.
$$
And we only need to deal with the boundary case $j = r + 1$.

\textbf{Case $(i,j)=(0, r+1)$}.
First of all, $\Box_\gamma I = 0$ leads to
\begin{equation} \label{Box-gamma-relation}
\begin{split}
&(z\p_{t^2})^{r + 2} \\
&\equiv -(-1)^{r + 1}(z\p_{t^1})^{r + 1} z\p_{t^2} - \sum_{k = 1}^r C^{r + 1}_k (z\p_{t^2})^{k + 1} (-z\p_{t^1})^{r + 1 - k} + q^\gamma e^{t^2}\\
&= (1 - (-1)^{r + 1}q^\ell e^{t^1}) q^\gamma e^{t^2} - \sum_{k = 1}^r (-1)^{r + 1 - k} C^{r + 1}_k \p^{z(r + 1 - k, k + 1)},
\end{split}
\end{equation}
which solves the case.

\textbf{Case $(i,j)=(\ge 1, r+1)$}.
By further differentiating $t^1$ on  (\ref{Box-gamma-relation}) and on (\ref{close-up}),  we get
\begin{equation} \label{xir+2+}
\begin{split}
&(z\p_{t^1})^i (z\p_{t^2})^{r + 2} \\
&\equiv (z\p_{t^1})^i q^\gamma e^{t^2} - (-1)^{r + 1} (z\p_{t^1})^i q^\ell e^{t^1}q^\gamma e^{t^2}\\
& \qquad - \sum_{k = 1}^r (-1)^{r + 1 - k} C^{r + 1}_k (z\p_{t^1})^{r + 1 + (i - k)} (z\p_{t^2})^{k + 1}\\
&= q^\gamma e^{t^2} (z\p_{t^1})^i - (-1)^{r + 1} q^\ell e^{t^1}q^\gamma e^{t^2} (z\p_{t^1} + z)^i \\
& \qquad -\sum_{k = i + 1}^{r}(-1)^{r + 1 - k} C^{r + 1}_k \p^{z(r + i + 1 - k, k + 1)} \\
& \qquad - q^\ell e^{t^1}q^\gamma e^{t^2}\sum_{k = 1}^i (-1)^{r + 1 - k} C^{r + 1}_k (z\p_{t^1} + z)^{i - k} (z\p_{t^2} + z)^{k}.
\end{split}
\end{equation}
This in particular solves the remaining cases $1 \le i \le r$.

An important observation of the above calculation of the matrix
$C_1(z, q)$, $C_2(z, q)$ is that $C_i$ is constant in $z$ when modulo $q^\gamma$.
Moreover $q^{d_2 \gamma}$ appears only in $d_2 = 1$.

Now we consider the case with base $S$. The Picard--Fuchs equations are
\begin{equation} \label{PF-eq}
\begin{split}
\Box_\ell &= \prod_{j = 0}^r z \p_{h + L_j} - q^\ell e^{t^1} \prod_{j = 0}^r z\p_{\xi - h + L'_j}, \\
\Box_\gamma &= z\p_{\xi} \prod_{j = 0}^r z\p_{\xi - h + L'_j} - q^\gamma e^{t^2}.
\end{split}
\end{equation}
Recall that for a basis element $T_\e = \bar T_s h^i \xi ^j$ in its canonical presentation ($0 \le i \le r$, $0 \le j \le r + 1$), we associated its naive quantization
\begin{equation} \label{q-basis}
\hat T_\e = \p^{z\e} = z\p_{\bar t^s} (z\p_{t^1})^i (z\p_{t^2})^j.
\end{equation}
The above calculations (\ref{hr+1}) --- (\ref{xir+2+}) need to be corrected by adding more differential symbols which may consist of higher derivatives in base divisors $z\p_{L_j}$'s and $z\p_{L'_j}$'s instead of a single $z\p_{\bar t^s}$. Thus they are not yet in the desired form (\ref{q-basis}). The $I$-lifting (\ref{Dubrovin-lift}) helps to reduce higher derivatives in base to the first order ones. Although new derivatives $D_{\bar\beta}$'s may appear during this reduction, it is crucial to notice that they all come with non-trivial classes $q^{\bar\beta^I}$'s.

With these preparations, we will prove the theorem by constructing
$$
C_{a, \bar\beta}(z) = \sum_{\beta \mapsto \bar\beta} C_{a; \beta}(z)\, q^\beta
$$
for any fixed $\bar\beta \in NE(S)$.

For $\bar\beta = 0$, the $I$-lifting (\ref{Dubrovin-lift}) introduces no further derivatives: $D_{\bar\beta = 0}(z) = {\rm Id}$. Thus higher order differentiations on $\bar t^s$'s can all be reduced to the first order. Notice that in (\ref{PF-eq}) all the corrected terms have $(z\p_{t^1})^i (z\p_{t^2})^j$ in the canonical range, hence (\ref{hr+1}) --- (\ref{xir+2+}) plus (\ref{Dubrovin-lift}) lead to the desired matrix $C_{a; \bar\beta = 0}(z)$.

Given $\bar\beta \in NE(S)$, to determine the coefficient $C_{a, \bar\beta}$ from calculating $z\p_a (\p^{z\e})$, it is enough to consider the restriction of (\ref{Dubrovin-lift}) to the finite sum over $\bar\beta' \le \bar\beta$. We repeatedly apply the following two constructions:

(i) The double derivative in base can be reduced to single derivative by (\ref{Dubrovin-lift}). If a new non-trivial derivative $D_{\bar\beta_1^I}(z)$ is introduced then we get a new higher order term with respect to $NE(S)$ since the factor $q^{\bar\beta^I_1}$ is added, thus such processes will produce classes with image outside $NE_{\le \bar\beta}(S)$ in finite steps. In fact the only term in (\ref{Dubrovin-lift}) not increasing the order in $NE(S)$ is given by
$$
\bar C_{aj; \bar\beta = 0}^k\, z\p_k.
$$
This is precisely the structural constant of cup product on $H(S)$, which is non-zero only if
$$
\deg \bar T_a + \deg \bar T_j = \deg \bar T_k.
$$
Hence $\deg \bar T_k \ge \deg \bar T_a$, with equality holds only if $\bar T_j = 1$, which may occur only for the first step. Any further reduction of base double derivatives $z\p_k z\p_l$ into a single derivative $z\p_m$ must then increase the cohomology degree $\deg \bar T_m > \deg \bar T_k$, if the order in $NE(S)$ is not increased. It is clear the process stops in finite steps.

(ii) Each time we have terms not in the reduced form (\ref{q-basis}) we perform the Picard--Fuchs reduction (\ref{hr+1}) --- (\ref{xir+2+}) with correction terms. After the first step in simplifying $z\p_{t^1} (\p^{z\e})$ and $z\p_{t^2} (\p^{z\e})$, in all the remaining steps we face such a situation only when we have non-trivial terms $D_{\bar\beta_1^I}(z)$ from construction (i). As before this produces classes with image outside $NE_{\le \bar\beta}(S)$ in finite steps.

Combining (i) and (ii) we obtain $C_{a; \bar\beta}$ in finite steps. It is clearly polynomial in $z$, $q^\gamma e^{t^2}$, $q^\ell e^{t^1}$ and $\f(q^\ell e^{t^1})$ since this holds for each steps.
\end{proof}

\begin{theorem} [(Naturality)] \label{Naturality}
The system is $\T$-invariant. That is, $\T C_a (\hat t) \cong C'_a (\T \hat t)$.
\end{theorem}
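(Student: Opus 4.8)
The strategy is to observe that $C_a$ is not merely \emph{some} connection matrix of the first-order system carried by $\{\p^{z\e}I\}$, but the specific one produced by the reduction recipe in the proof of Theorem~\ref{q-LH}; the plan is to show that this recipe is $\T$-covariant modulo analytic continuation, so that $\T C^X_a$ and $C^{X'}_a$ are outputs of the same algorithm applied to $\T$-corresponding inputs. First I would record that $C_a$ is uniquely determined: modulo Novikov variables $\p^{z\e}I\equiv T_\e e^{\hat t/z}$, whose matrix of $T_\mu$-coefficients is $I_N+O(z^{-1})$, so $(\p^{z\e}I)$ is invertible over the completed Novikov ring and $C_a=(\p^{z\e}I)^{-1}z\p_a(\p^{z\e}I)$. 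Hence it suffices to track, under $\T$, the three ingredients of the recipe.

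\emph{(i) The canonical basis and its naive quantization.} By Notations~\ref{n:1}, $\T$ carries $\{T_\e\}$ on $X$ onto $\{T'_\e\}$ on $X'$ and preserves the coordinates $t^1,t^2,\bar t^i$; it therefore intertwines the operators $\p^{z\e}$ and $z\p_a$ on the two sides. \emph{(ii) The $I$-lifting~\eqref{Dubrovin-lift}.} The structural constants $\bar C^k_{ij,\bar\beta}$ are those of $QH(S)$, hence intrinsic to the common base and literally the same on $X$ and $X'$. The lifting operators transform by $z\p_{a_i}\mapsto z\p_{b'_i}$, $z\p_{b_i}\mapsto z\p_{a'_i}$, $z\p_\xi\mapsto z\p_{\xi'}$ (\S\ref{I-function}); since $\T$ preserves the Poincar\'e pairing, the multiplicities defining the blocks $D^A_{\bar\beta^*},D^B_{\bar\beta^*},D^C_{\bar\beta^*}$ of $D_{\bar\beta^*}(z)$ match those defining the corresponding blocks of the operator $D_{\T\bar\beta^*}(z)$ built from the divisors of $X'$ (the $A$- and $B$-blocks being swapped, in accordance with $\T a_i=b'_i$, $\T b_i=a'_i$), so $\T D_{\bar\beta^*}(z)=D_{\T\bar\beta^*}(z)$ after the harmless reordering of these commuting blocks; in particular $\bar\beta^*$ is admissible for $X$ iff $\T\bar\beta^*$ is admissible for $X'$. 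The freedom recorded in Theorem~\ref{I-lift} --- that any two admissible lifts differ by an element of $\langle\Box_\ell,\Box_\gamma\rangle$ --- then lets one align the lift chosen on $X$ with the lift chosen on $X'$, the discrepancy being absorbed by the Picard--Fuchs part of the recipe. \emph{(iii) The Picard--Fuchs system.} By Proposition~\ref{p:6.11}, $\T\langle\Box^X_\ell,\Box^X_\gamma\rangle\cong\langle\Box^{X'}_{\ell'},\Box^{X'}_{\gamma'}\rangle$, so the fiberwise reductions \eqref{hr+1}--\eqref{xir+2+} have $\T$-images that are again valid reductions for $I^{X'}$.

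The one genuinely non-formal point is the behaviour of the Novikov variables, where the analytic continuation enters. Under $\T$ one has $q^\ell\mapsto q^{-\ell'}$ and $q^\gamma\mapsto q^{\gamma'+\ell'}$ (cf.\ Proposition~\ref{p:6.11} and the proof of Proposition~\ref{F-ef}), so that in the only combinations occurring in $C_a$,
$$
q^\ell e^{t^1}\;\longmapsto\;\bigl(q^{\ell'}e^{-t^1}\bigr)^{-1},\qquad
q^\gamma e^{t^2}\;\longmapsto\;\bigl(q^{\gamma'}e^{t^1+t^2}\bigr)\bigl(q^{\ell'}e^{-t^1}\bigr),
$$
and, by the functional equation $\f(q)+\f(q^{-1})=(-1)^r$ of~\eqref{rat-f}, $\f(q^\ell e^{t^1})\mapsto(-1)^r-\f(q^{\ell'}e^{-t^1})$. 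Using the structural output of the proof of Theorem~\ref{q-LH} --- the bounded polynomial dependence on $q^\gamma e^{t^2}$ and $q^\ell e^{t^1}$, the fact that $\f$ enters only through the fiberwise reductions governed by Proposition~\ref{p:6.11}, and, with the lift chosen as in~(ii), the way the $I$-lifting factors $q^{\bar\beta^*}e^{D.\bar\beta^*}$ combine with the subsequent Picard--Fuchs reductions --- I would check that under the above substitutions no spurious poles or negative powers of $q^{\ell'}e^{-t^1}$ survive and that the image of each coefficient of $C^X_a$ lies in exactly the polynomial ring in $q^{\gamma'}e^{t^1+t^2}$, $q^{\ell'}e^{-t^1}$, $\f(q^{\ell'}e^{-t^1})$ attached to $X'$, where it agrees with the corresponding coefficient of $C^{X'}_a$. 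Combining (i)--(iii) with this continuation statement, the algorithm computing $C^X_a$ transforms step by step into the one computing $C^{X'}_a$, and by the uniqueness recorded above, $\T C^X_a\cong C^{X'}_a$, which gives the $\T$-invariance of the whole system.

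The main obstacle is the analytic-continuation bookkeeping of the previous paragraph. Each individual reduction rule transforms transparently, but $C_a$ is assembled from many nested applications of them, and one must verify that the mixed monomials $q^\ell e^{t^1}$, $q^\gamma e^{t^2}$ and the $I$-lifting factors $q^{\bar\beta^*}e^{D.\bar\beta^*}=q^{\bar\beta}(q^\ell e^{t^1})^{a}(q^\gamma e^{t^2})^{b}$ conspire, under $q^\ell e^{t^1}\mapsto(q^{\ell'}e^{-t^1})^{-1}$, to produce only non-negative powers --- a ``balancing'' between $\ell$-classes and $\gamma$-classes which is precisely what forces the use of the $I$-\emph{minimal} lift (rather than an arbitrary geometric lift) and of the functional equation for $\f$. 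A clean way to organize this is to induct on $\bar\beta\in NE(S)$, exactly as in the proof of Theorem~\ref{q-LH}, verifying the matching coefficient by coefficient in the quotient Mori cone.
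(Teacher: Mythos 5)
Your proof is correct and takes essentially the same approach as the paper: $\T$-invariance of the Picard--Fuchs system (Proposition~\ref{p:6.11}) together with aligning the admissible lifts in the $I$-lifting of the base QDE via the freedom in Theorem~\ref{I-lift}. Where you argue abstractly that admissibility transfers under $\T$ (so $\T^{-1}\bar\beta^{I'}$ is automatically a valid admissible lift on $X$), the paper computes this lift explicitly as $\bar\beta^I - \delta\ell$ when $\mu + \mu' < 0$ and checks admissibility via the gap estimate~(\ref{mu-gap}); the coefficient-by-coefficient bookkeeping you propose in the final paragraph is then unnecessary, as it is already a consequence of the uniqueness of $C_a$ and the structure established in Theorem~\ref{q-LH}(3).
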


\begin{proof}
We have seen the $\T$-invariance of the Picard--Fuchs systems. It remains to show the $\T$-invariance of the $I$-lifting of the base Dubrovin connection, up to modifications by $\Box_\ell$ and $\Box_\gamma$.

By (\ref{Dubrovin-lift}), the simplest situation to achieve such an invariance is the case that $\T \bar\beta^I = \bar\beta^{I'}$, since then $\T D_{\bar\beta^I}(z) = D'_{\bar\beta^{I'}}(z)$ as well.

Indeed, when $\mu + \mu' \ge 0$ for a curve class $\bar\beta$, we do have
\begin{align*}
\T \bar\beta^I &= \T(\bar\beta -\mu \ell - (\mu + \mu')\gamma) \\
&= \bar\beta + \mu \ell' - (\mu + \mu')(\ell' + \gamma') \\
&= \bar\beta - \mu' \ell' - (\mu + \mu') \gamma' = \bar\beta^{I'}.
\end{align*}

It remains to analyze the case $\mu + \mu' < 0$ for $\bar\beta$. In this case,
$$
\T \bar\beta^I - \bar\beta^{I'}= \bar\beta + \mu \ell' - (\bar\beta - \mu'\ell') = (\mu + \mu') \ell' = -\delta \ell',
$$
where $\delta := -(\mu + \mu') > 0$ is the finite gap. Thus
$$
\T q^{\bar\beta^I - \delta \ell} = q^{\bar\beta^{I'}}
$$
and this suggests that we should try to decrease $\bar\beta^I$ by $\ell$ for $\delta$ times.

In other words, we should expect to have another valid lifting:
\begin{equation} \label{twisted-lift}
z\p_i z\p_j I = \sum_{k, \bar\beta} q^{\bar\beta^I - \delta\ell} e^{D.(\bar\beta^I-\delta\ell)}\bar C_{ij, \bar\beta}^k(\bar t)\, z\p_k D_{\bar\beta^I - \delta\ell}(z) I.
\end{equation}
This is easy to check: Notice that $n_i(\bar\beta^I - \delta \ell) = n_i(\bar\beta^I) + \delta > 0$. $n'_i(\bar\beta^I - \delta) = n'_i(\bar\beta^I) - \delta$, which is also $n'_i(\bar\beta + \mu'\ell) = \mu' - \mu'_i \ge 0$ (c.f.~the gap in (\ref{mu-gap})). $n'_{r + 1} \ge 0$ is unchanged. Thus, the operator $D_{\bar\beta^I -\delta \ell}$ is well defined, though $\bar\beta^I - \delta \ell$ may not be effective. By Theorem \ref{I-lift}, (\ref{twisted-lift}) is also a lift and the theorem is proved.
\end{proof}

\subsection{Reduction to the canonical form: The final proof}

We will construct a gauge transformation $B$ to eliminate all the $z$
dependence of $C_a$.
The final system is then equivalent to the Dubrovin connection on $QH(X)$.
Such a procedure is well known in small quantum cohomology of Fano type
examples or in the context of abstract quantum cohomology.
(See e.g.~\cite{Guest} and references therein.)
Here we will also need to take into account the role played by the
generalized mirror transformation (GMT) $\tau(\hat t)$.

In fact $B$ is nothing more than the Birkhoff factorization introduced before:
\begin{equation} \label{bf}
(\p^{z\e} I (\hat t)) = (z\nabla J) (\tau) B(\tau)
\end{equation}
valid at the generalized mirror point $\tau = \tau(\hat t)$.
Thus $B$ exists uniquely via an inductive procedure.
However the analytic (formal) dependence of $B$ is not manifest if one
proceeds in this direction,
as the procedure involves $J$ and $I$, for neither the analytic
dependence holds.
Therefore, it is not clear how to prove $\T B \cong B'$
up to analytic continuations.

In this subsection we will proceed in a slightly different, but ultimately
equivalent, way.
Namely we study instead the gauge transformation $B$ directly from the
differential system
\begin{equation} \label{system-1}
z\p_a (\p^{z\e} I) = (\p^{z\e} I) C_a.
\end{equation}
Even though the solutions $I$ are not $\T$-invariant,
the system is $\T$-invariant by Theorem \ref{Naturality}.
This system can be analyzed inductively with respect to the partial
ordering of the Mori cone on the base $NE(S)$.

Substituting (\ref{bf}) into (\ref{system-1}), we get $z\p_a (\nabla J) B + z (\nabla J) \p_a B = (\nabla J) B C_a$, hence
\begin{equation} \label{tilde-Ca}
z\p_a (\nabla J) = (\nabla J) (-z \p_a B + BC_a)B^{-1} =: (\nabla J) \tilde C_a.
\end{equation}

We note the subtlety in the meaning of $\tilde C_a (\hat t)$.
Let $\tau = \sum \tau^\mu T_\mu$.
Then the QDE reads as
$$z\p_\mu (\nabla J)(\tau) = (\nabla J) (\tau) \tilde C_\mu (\tau),$$
where $\tilde C_\mu (\tau)$ is the structure matrix of quantum multiplication
at the point $\tau \in H(X)$. Then
\begin{equation*}
z\p_a (\nabla J) = \sum_\mu \frac{\p \tau^\mu}{\p t^a} z\p_\mu (\nabla J) = (\nabla J) \sum_\mu\tilde C_\mu \frac{\p \tau^\mu}{\p t^a},
\end{equation*}
hence
\begin{equation} \label{deftCa}
\tilde C_a (\hat t) \equiv \sum_\mu \tilde C_\mu (\tau(\hat t)) \frac{\p \tau^\mu}{\p t^a}(\hat t).
\end{equation}
In particular, $\tilde C_a$ is independent of $z$.

With this understood, (\ref{tilde-Ca}) in fact is equivalent to
\begin{equation} \label{q-prod-B}
\tilde C_a = B_0 C_{a; 0} B_0^{-1}
\end{equation}
($B^{-1}_0 := (B^{-1})_0$) and the cancellation equation
\begin{equation} \label{cancel-eq}
z\p_a B = B C_a  - B_0 C_{a; 0} B_0^{-1} B,
\end{equation}
where the subscript $0$ stands for the coefficients of $z^0$
in the $z$ expansion.

Our plan is to analyze $B = B(z)$ with respect to the weight
$w := (\bar\beta, d_2) \in W$,
which carries a natural partial ordering.
The initial condition is $B_{w = (0, 0)} = {\rm Id}$:
Since we have seen that for $w = (0, 0)$, $C_a$ has only $z$ constant terms
$C_{a; (0, 0), 0}\, z^0$.
The total $z$ constant terms in (\ref{cancel-eq}) are trivially compatible.
They are $-B_0 C_{a; 0}$ on both sides.

Now we perform the induction on $W$.
Suppose that $B_{w'}$ satisfies $\T B_{w'} = B'_{w'}$ for all $w' < w$. Then
\begin{equation} \label{B-recursive}
z\p_a B_{w} = \sum_{w_1 + w_2 = w} B_{w_1} C_{a; w_2} - \sum_{w_1 + w_2 + w_3 + w_4 = w} B_{w_1, 0} C_{a; w_2, 0} B^{-1}_{w_3, 0} B_{w_4}.
\end{equation}
Write $C_{a; w} = \sum_{j = 0}^{m(w)} C_{a; w, j}\, z^j$ and $B_{w} = \sum_{j = 0}^{n(w)} B_{w, j}\, z^j$. Then (\ref{B-recursive}) implies that
$$
n(w) = \max_{w' < w} (n(w') + m(w - w')) -1.
$$
Notice that on the RHS all the $B$ terms have strictly smaller degree than $w$ except
$$
B_{w} C_{a; (0, 0)} - C_{a; (0, 0)} B_{w} + B_{w, 0} C_{a; (0, 0)} - C_{a; (0, 0)} B^{-1}_{w, 0}
$$
which has maximal $z$ degree $\le n(w)$. Moreover, by descending induction on the $z$ degree, to determine $B_{w, j}$ we need only $B_{w'}$ with $w' < w$ or $B_{w, j'}$ with $j' > j$, which are all $\T$-invariant by induction. Hence the difference satisfies
$$
\p_a (\T B_{w, j} - B'_{w, j}) = 0.
$$
The functions involved are all formal in $\bar t$ and analytic in $t^1, t^2$, and without constant term ($B_{w = (0, 0)} = {\rm Id}$). Hence $\T B_{w, j} = B'_{w, j}$.

To summarize, we have proved that for any $\hat t = \bar t + D \in H(S) \oplus \Bbb C h \oplus \Bbb C \xi$,
$$
\T B (\tau(\hat t)) \cong B' (\tau'(\hat t)).
$$
In particular, by (\ref{q-prod-B}) this implies that the $\T$-invariance of $\tilde C_a(\hat t)$. In more explicit terms, we have the $\T$-invariance of
\begin{equation} \label{3-pt-tau}
\tilde C_{a\nu}^\kappa = \sum_{n \ge 0,\, \mu} \frac{q^\beta}{n!}
\frac{\p \tau^\mu(\hat t)}{\p t^a} \langle T_\mu, T_\nu, T^\kappa,
\tau(\hat t)^n\rangle_\beta
\end{equation}
for arbitrary (basis elements) $T_\nu$, $T^\kappa \in H(X)$.

The very special case $T_\nu = 1$ leads to non-trivial invariants
only for 3-point classical invariant ($n = 0$) and $\beta = 0$, and
also $\mu = \kappa$. Since $\kappa$ is arbitrary, we have thus
proved the $\T$-invariance of $\p_a \tau$. Then
$$
\p_a(\T \tau - \tau') = \T \p_a \tau - \p_a \tau' = 0.
$$
Again since $\tau(\hat t) = \hat t$ for $(\bar\beta, d_2) = (0, 0)$, this proves
\begin{equation*}
\T \tau = \tau'.
\end{equation*}

\begin{remark}
$\tilde C_a$ is the derivative of the 2-point (Green) function at $\tau(\hat t)$:
$$
\tilde C_{a\nu}^\kappa = \frac{\p}{\p t^a} \langle\!\langle T_\nu, T^\kappa \rangle\!\rangle (\tau).
$$
\end{remark}

Now we may finish the proof of the quantum invariance (Theorem \ref{main-thm}).\smallskip

\begin{proof}
Since we have established the analytic continuation of $B$ (hence $P$) and $\tau$, by Proposition \ref{red-GMT} (reduction to special BF/GMT with $\xi$ class) and Lemma \ref{naive-ql} (naive quasi-linearity with $\xi$ class) the invariance of quantum ring is proved.
\end{proof}

\begin{remark}
We sketch an alternative shortcut to the proof to minimize the
usage of extremal functions and completely get rid of the
quasi-linearity reduction.

Indeed, by degeneration reduction (Part I \S3), the
quantum invariance problem is reduced to local models for
descendent invariants of special type. Part I Theorem~4.2
then eliminates the necessity of using $\psi$ classes and we only
need to prove the invariance of quantum ring for local models.

Now for split flops, the Birkhoff factorization matrix $B(z)$
exists uniquely.  Then quantum Leray--Hirsch theorem (Theorem
\ref{q-LH}) produces the matrix $C_a(z)$ which satisfies the
analytic continuation property. The analytic continuation of $B(z)$
is then deduced from it. In particular, (\ref{q-prod-B}) gives the
analytic continuation of $\tilde C_a(\hat t)$, namely
(\ref{3-pt-tau}), and then of $\tau(\hat t)$.

Now we apply the \emph{reduction method} used in the proof of
Proposition \ref{red-GMT}, with the role of \emph{special
insertion} $\tau_k a \xi$ being replaced by \emph{3 primary
insertions} $T_a, T_\nu, T^\kappa$ with $T_a \in H(S)$ and $T_\nu$,
$T^\kappa \in H(X)$ being arbitrary. We can do so because $\p
\tau/\p t^a = T_a + \cdots$. Notice that since $n \ge 3$, the
divisor reconstruction we need can all be performed within primary
invariants.

Namely, using Part~I Equation~(2.1) for $h$ and $\xi$, we may
reconstruct any $n \ge 3$ point primary invariants by the the one
with only two general insertions not from $H(S)$. As in Step 2 of
the proof of Part I Theorem~4.4, the moving of $\xi$ class will
always be $\T$-compatible, while the moving of $h$ class to an
insertion $t_i h^r$ may generate topological defect. The key point
is that this defect can be canceled out by the extremal quantum
corrections from some diagonal splitting term. (In fact this is the
building block of our determination of the extremal invariants in
Part~I \S2.)

This leads to a logically shorter and more conceptual proof of the
quantum invariance theorem.

We present the complete argument for at least two reasons. Firstly,
the quantum correction part (extremal case) works for non-split
flops as well. Secondly, the BF/GMT algorithm, together with the
divisorial reconstruction, provides an effective method to
determine all genus zero descendent (not just primary) invariants
for any split toric bundles.
\end{remark}

\section{Examples on quantum Leray--Hirsch} \label{MSJ-ex} 

\subsection{The toy example}
We consider the Hirzebruch surface $X=\Sigma_{-1}$ which is the $P^1$ bundle 
over $P^1$ associated to the vector bundle $\mathscr{O} \oplus \mathscr{O}(1)$.
The GW theory on $X$ can be easily determined by the classical method. 
However, we will apply quantum Leray--Hirsch to it and compare with the 
result obtained by the classical method.
 
Write $H(S) = H(P^1) = \Bbb C [p]/(p^2)$. 
By the Leray--Hirsch theorem, $H(X) = H(S)[h]/\langle h(h+p)\rangle$ has 
rank $N = 4$. 
Consider the basis $\{T_i \mid 1 \le i \le 4 \}$ given in the following order
$$
1,\, h,\,  p,\,  hp.
$$
The dual basis $\{T^i\}$ is easily seen to be given by
$$
hp,\, p,\, h + p,\,1. 
$$ 

We denote by $q = q^\ell e^{t}$, $\bar q= q^b e^{\bar t}$, where 
$b = [S] \cong [P^1]$. The Picard-Fuchs operator is
\begin{align*}
\Box_\ell &= (z\p_h)(z\p_{h+p} )- q .
\end{align*}
It leads to
\begin{align} \label{h-pf}
(z\p_h)^2 &= q-(z\p_h)(z\p_p).
\end{align}

Since $H(S) = H^0(S) \oplus H^2(S) = \Bbb C 1 \oplus \Bbb C p$ consists of the 
small parameters only, the small and big quantum rings coincide. 
It is easy to compute its QDE:
$$
z\p_p (z\p_1, z\p_p) = (z\p_1, z\p_p) \begin{pmatrix} 0 & \bar q \\ 1 & 0 \end{pmatrix}.
$$
Since $b^I = b-\ell$, we get $D_{b^I}(z) = z\p_h$. We get the lifting of the QDE to be
\begin{equation} \label{b-lift}
(z\p_p)^2 = \bar qq^{-1}\, z\p_{h}.
\end{equation}

By (\ref{h-pf}) and (\ref{b-lift}), we calculate the matrix $C_{t^a}$ of the action of $z\p_{t^a} = z\p_h$ or $z\p_p$  on $\hat T_i$ as $z\p_{t^a} \hat T_j = \sum_k C_{t^aj}^k(z) \hat T_k$ modulo $I^X$. Then 
$$
\setcounter{MaxMatrixCols}{4}
C_h =
\begin{bmatrix}
 0&q&0&-\bar q \\
1&0&0&z\bar q q^{-1}\\
0&0&0&q\\
0&-1&1&\bar q q^{-1}
\end{bmatrix},
$$ 

$$
C_p =
\begin{bmatrix}
0&0&0&\bar q\\
0&0&\bar q q^{-1}&- z\bar q q^{-1}\\
1&0&0&0\\
0&1&0&-\bar q q^{-1}
\end{bmatrix}.
$$ 
Here notice that the index $k$ (respectively $j$) corresponds to the row (respectively column) index.

We solve $B$ from $C_h$ and $C_p$ by the recursive equation (\ref{B-recursive}): $B_{2, 4} = -\bar q q^{-1}$,
$$
B =
\begin{bmatrix}
1&0&0&0\\
0&1&0& -\bar q q^{-1}\\
0&0&1&0\\
0&0&0&1
\end{bmatrix}
$$
\setcounter{MaxMatrixCols}{12}

Looking at the first column vector, it implies that in $J = PI$, one needs no Birkhoff factorization ($P(z) = 1$) and the mirror transformations reduces to identity $\tau(\hat t) = \hat t$. The full matrix system requires basis in all directions which uses the full matrix $B$ and non-trivial Birkhoff factorization is required.
$$
B = I_4   -\bar q q^{-1}e_{2, 4} , \qquad B^{-1} = I_4 +\bar q q^{-1}e_{2, 4} .
$$

From this we get $\tilde C_{t^a}$ from (\ref{q-prod-B}): $\tilde C_{t^a} = B_0 C_{t^a; 0} B_0^{-1}$, which is a minor adjustment of the matrix $C_{t^a}$.

$$
\tilde C_h =
\begin{bmatrix}
 0&q&0&0\\
1&\bar q q^{-1}&-\bar q q^{-1}&0\\
0&0&0&q\\
0&-1&1&0
\end{bmatrix},
$$ 
$$
\tilde C_p =
\begin{bmatrix}
0&0&0&\bar q\\
0&-\bar q q^{-1}&\bar q q^{-1}&0\\
1&0&0&0\\
0&1&0&0
\end{bmatrix}.
$$ 
\setcounter{MaxMatrixCols}{12}

By setting $\hat t = 0$, we get $q = q^\ell$ and $\bar q = q^b$. Thus we can read out the corresponding 3-point invariants from the above tables. For example, we look at the entries at $(2, 3)$.
\begin{equation} \label{result}
\begin{split}
\tilde C_{h3}^2 &= \langle T_2, T_3, T^2 \rangle = \langle h, p, p\rangle = -q^{-\ell}q^b, \\
\tilde C_{p3}^2 &= \langle T_3, T_3, T^2 \rangle = \langle p, p, p \rangle = q^{-\ell}q^b. 
\end{split}
\end{equation}

By classical method, we can write down the $I$-function: For $\beta= d\ell+sb$,
\begin{equation*} 
I^X_\beta = \frac{q^{d\ell} q^{sb}}{\prod\limits_{1}^{s} (p + mz)^2\prod\limits_{1}^{d} (h+ mz)\prod\limits_{1}^{d+s} (h+p + mz)} =O(z^{-2}).
\end{equation*}
This implies that $J^X=I^X$. Also we find that $I^X_\beta=O(z^{-3})$ except $s=1$, $d=-1$, and in that csae the coefficient of $z^{-2}$ is $h$. It tells us that $\langle p\rangle = q^{-\ell} q^b$ and $\langle h\rangle = -q^{-\ell} q^b$. (Here we have used $h^2=-hp$.) By the divisor axiom, $\langle h, p, p\rangle = \delta_h\delta_p\langle p\rangle =  -q^{-\ell} q^b$. Similarly, $\langle p, p, p\rangle = \delta_p \delta_p \langle p\rangle =  q^{-\ell} q^b$. These results coincide with (\ref{result}).

\begin{remark}
Notice that we state and prove the quantum Leray--Hirsch theorem (Theorem \ref{q-LH}) for certain double projective bundles (of splitting type) in order to apply it to the analytic continuation problem under flops. The same proof shows that it holds true for projective bundles, and more generally for iterated projective bundles (of splitting type).
\end{remark}

\subsection{An example with non-trivial BF/GMT}
Consider $P^1$ flop $f: X \dasharrow X'$ with bundle data 
$$(S, F, F') = (P^1, \mathscr{O} \oplus \mathscr{O}, \mathscr{O} 
\oplus \mathscr{O}(1)).$$

Write $H(S) = \Bbb C [p]/(p^2)$ with Chern polynomials
$$
f_F(h) := h^2, \qquad f_{N \oplus \mathscr{O}}(\xi) := \xi(\xi - h)(\xi - h + p).
$$
Then $H = H(X) = H(S)[h, \xi]/(f_F, f_{N \oplus \mathscr{O}})$ has dimension $N = 12$
with a basis $\{T_i \mid 0 \le i \le 11 \}$ being
$$
1,\, h,\, \xi,\, p,\, h\xi,\, hp,\, \xi^2,\, \xi p,\, h\xi^2,\, h\xi p,\, \xi^2 p,\, h\xi^2 p.
$$

Denote by $q_1 = q^\ell e^{t^1}$, $q_2 = q^\gamma e^{t^2}$, $\bar q = q^b e^{t^3}$, 
where $b = [S] \cong [P^1]$, and $\f = \f(q_1)$. The Picard--Fuchs operators are
\begin{align*}
\Box_\ell &= (z\p_h)^2 - q_1 z\p_{\xi - h}\, z\p_{\xi - h + p}, \\
\Box_\gamma &= z\p_\xi\, z\p_{\xi - h}\, z\p_{\xi - h + p} - q_2.
\end{align*}
They lead to
\begin{align} \label{h-pf-1}
(z\p_h)^2 &= \f (z\p_\xi)^2 - \f\, z\p_p\, z\p_h + \f\, z\p_p\, z\p_\xi - 2\f \, z\p_h\, z\p_\xi \\
\label{xi-pf-1} (z\p_\xi)^3 &= q_2(1 - q_1) - z\p_p (z\p_\xi)^2 + 2 z\p_h (z\p_\xi)^2 + z\p_p\, z\p_h\, z\p_\xi.
\end{align}

As before $H(S) = H^0(S) \oplus H^2(S) = \Bbb C 1 \oplus \Bbb C p$ has only small parameters with its QDE be given by
$$
z\p_p (z\p_1, z\p_p) = (z\p_1, z\p_p) \begin{pmatrix} 0 & \bar q \\ 1 & 0 \end{pmatrix}.
$$

The real difference from the previous $((0, 0), (0, -1))$ case starts with the lifting of this QDE. Now $b^I = b - \gamma$, we get $D_b = z\p_\xi \, z\p_{\xi - h}$, and the lifting becomes
\begin{equation} \label{b-lift-1}
(z\p_p)^2 = \bar qq_2^{-1}\, z\p_\xi \, z\p_{\xi - h}.
\end{equation}

By (\ref{h-pf-1}), (\ref{xi-pf-1}) and (\ref{b-lift-1}), following the steps in the proof of Theorem \ref{q-LH} we calculate $C_a$ in $z\p_a \hat T_j = \sum_k C_{aj}^k(z) \hat T_k$ modulo $I^X$. This is a lengthy yet straightforward calculation. For simplicity set $q^* = \bar qq_2^{-1}$ be the chosen admissible lift and set $\g = \f(q^*)$, $A = q_2 - q_1q_2$, $S = q_2 + q_1q_2$. We get \small
$$
\setcounter{MaxMatrixCols}{12}
C_1 =
\begin{bmatrix}
 &  &  &  & \!q_1q_2\! & \!\f\, q_2q^*\! &  &  & \!z q_1q_2\! &  &  & \\
1\! &  &  &  &  &  &  &  &  &  &  & \\
 &  &  & & &  &  &  & \!q_1q_2\! &  & & \\
 &  &  &  &  &  &  &  &  & \!q_1q_2\! &  & \!z q_1q_2 \\
 & \!-2\f\! & \!1\! &  &  & \!z\f \,q^*\! &  &  &  &  &  &  \\
 & \!-\f\! &  & \!1\! &  &  &  &  &  &  &  &  \\
 & \!\f\! &  &  &  & \!-z\f\, q^*\! &  & &  &  &  &  \\
 & \!\f\! &  &  &  &  &  &  &  &  &  & \!q_1q_2 \\
 &  &  &  &  &  & \!1\! &  &  &  &  &  \\
 &  &  &  &  & \!\f(q^* - 2)\! &  & \!1\! &  &  &  &  \\
 &  &  &  &  & \!\f(1 - q^*)\! &  &  &  &  &  &  \\
 &  &  &  &  &  &  &  &  &  & \!1\! &
\end{bmatrix},
$$ \normalsize
\small
$$
C_2 =
\begin{bmatrix}
 &  &  &  &  &  & \!A\! &  & \!z q_1q_2\! &  & \!z A \g\! & \!z^2 q_1q_2 \g \\
 &  &  &  &  &  &  &  & \!A\!  &  &  & \!z A \g \\
\!1\! &  &  &  & &  &  &  & \!2q_1q_2\! &  & \!-q_2\g\! & \!z q_1q_2 \g \\
 &  &  &  &  &  &  &  & \!q_1q_2\! &  & \! A(1  + \g)\! & \!z q_1q_2(1 + 2\g) \\
 & \!1\! &  &  &  &  &  &  &  &  & \!z^2\g\! & \!-q_2q^*(1 +\g) \\
 &  &  &  &  &  &  &  &  &  &  & \!A(1 + \g) \\
 &  & \!1\! &  &  &  &  &  &  &  & \!-z^2\g\! & \\
 &  &  & \!1\! &  &  &  &  &  &  &  & \!q_1q_2(2 + \g) \\
 &  &  &  & \!1\! &  & \!2\! &  &  &  & \!z\g\! & \!-z^2\g \\
 &  &  &  &  & \!1\! & \!1\! &  &  &  & \!2z\g\! &  \\
 &  &  &  &  &  & \!-1\! & \!1\! &  &  & \!-2z\g\! &  \\
 &  &  &  &  &  &  &  & \!-1\! & \!1\! & \!2 + \g\! & \!-2z\g
\end{bmatrix},
$$ \normalsize
and $C_3 =$ 
\small
$$
\begin{bmatrix}
 &  &  &  &  & \!-q_1q_2q^*\! &  & \!Aq^*\! &  & \!z q_1 q_2 q^*\! & \!z(q_1q_2q^* - A\g)\! & \!-z^2 q_1q_2\g \\
 &  &  &  &  &  &  &  &   & \!A q^*\! & \!A q^*\! & \!-z A\g \\
 &  &  &  & &  &  &  &  & \!q_1q_2q^*\! & \!(S - q_1q_2q^*)\g\! & \!-z q_1q_2\g \\
1\! &  &  &  &  &  &  &  &  & \!q_1q_2q^*\! & \!q_1q_2q^* - A\g\! &\!-2z q_1q_2\g \\
 &  &  & \!-q^*\! &  &  &  & \!z q^*\! &  &  & \!-z^2\g\! & \!(A + q_1q_2q^*)\g \\
 & \!1\! &  &  &  &  &  &  &  &  &  & -A\g\\
 &  &  & \!q^*\! &  &  &  & \!-z q^*\! &  &  & \!z^2\g\! & \!q_1q_2q^*\\
 &  & \!1\! &  &  &  &  &  &  &  &  & \!-q_1q_2\g \\
 &  &  &  &  & \!q^*\! &  & \!q^*\! &  & \!-z q^*\! & \!z (q^* - 2)\g\! & \!z^2\g \\
 &  &  &  & \!1\! &  &  & \!q^*\! &  &  & \!-2z \g\! &  \\
 &  &  &  &  &  & \!1\! & \!-q^*\! &  &  & \!2z\g\! &  \\
 &  &  &  &  &  &  &  & \!1\! & \!-q^*\! & \!(q^* - 2)\g\! & \!2z\g
\end{bmatrix}.
$$ \normalsize

\medskip
The appearance of $\f$ and $\g$ demonstrates the analytic dependence on the 
parameters and explains the validity of analytic continuations. 
It is now possible to solve the gauge transform $B$ inductively on 
$w = (\bar\beta, d_2)$.
The formulas are complicate and the details are thus omitted.

\begin{remark}
These examples were reported in \cite{LW}.
\end{remark}

\appendix
\section{BF/GMT and regularization}

We consider a local split $P^r$ flop $f: X \dasharrow X'$ over 
a general base $S$ and perform the BF/GMT algorithm in \S \ref{IJ-BF/GMT} 
simultaneously on $X$ and $X'$. Mysterious cancellation arisen from the 
Birkhoff factorization, which is called \emph{regularization} here, 
leads to the \emph{first step} of analytic continuation by transforming a 
rational function into its polynomial part in a canonical fashion. 
(See Proposition \ref{1step}.)

This result might lead one to believe that it is possible to prove the main
results of this paper without the quantum Leray--Hirsch theorem.
However, a closer look of the proof reveals the increasing complexity of the 
combinatorics and shows the limitation of this approach beyond the first step.
In fact, quantum Leray--Hirsch implicitly implies the existence of all higher 
order regularization. 
A direct proof along the line presented here seems, however, a rather 
non-trivial combinatorial task.

\subsection{The fundamental rational functions $Q$ and $W_{\beta_S, d_2}$}

We start by recalling some basic set-up from \S \ref{s:6}. 

Consider a local split $P^r$ flop $f: X \dasharrow X'$ with
structure data $(S, F, F')$, where $F = \bigoplus_{i = 0}^r L_i$ and 
$F' = \bigoplus_{i = 0}^r L'_i$ are sum of line bundles. 
Denote by $a_i = c_1(L_i) + h$, $b_i = c_1(L_i') + \xi - h$. 
For $\beta = \beta_S + d \ell + d_2 \gamma$, $\mu_i := L_i.\beta_S$, 
$\mu_i' := L_i'.\beta_S$. 
Thus $a_i.\beta = d + \mu_i$ and $b_i.\beta = d_2 - d + \mu_i'$. 
Also recall that $\mu^I = {\rm max}_i\, \mu_i$, 
$\mu'^I = {\rm max}_i\, \mu_i'$, 
and $\nu^I = {\rm max}\{\mu^I + \mu'^I, 0\}$. Let
\begin{equation} \label{rel-c1}
\begin{split}
\lambda_\beta &:= c_1(X/S).\beta \\
&= (c_1 + c_1').\beta_S + (r + 2) d_2 = \sum (\mu_i + \mu_i') + (r + 2) d_2,
\end{split}
\end{equation}
which depends only on $(\beta_S, d_2)$. Then the hypergeometric modification takes
the form
\begin{equation*}
I = I(t^1, t^2, \bar t, z, z^{-1}) = e^{(t^1 h + t^2 \xi)/z} \sum_{\beta
\in NE(X)} q^\beta e^{d t^1 + d_2 t^2} I^{X/S}_{\beta} J^S_{\beta_S}(\bar t)
\end{equation*}
with relative factor
\begin{equation*}
I^{X/S}_{\beta} = z^{-\lambda_\beta} \frac{\Gamma(1 +
\frac{\xi}{z})}{\Gamma(1 + \frac{\xi}{z} + d_2)} \prod_{i = 0}^r
\frac{\Gamma(1 + \frac{a_i}{z})}{\Gamma(1 + \frac{a_i}{z} + d + \mu_i)}
\frac{\Gamma(1 + \frac{b_i}{z})}{\Gamma(1 + \frac{b_i}{z} + d_2 - d + \mu_i')}.
\end{equation*}

The case $d_2 < 0$ leads to a $\xi$ factor and then $\T I_{d_2} =
I'_{d_2}$ which contains only $\T I$-effective range (by Lemma \ref{naive-ql}).
In particular the BF and GMT are all $\T$-compatible. So let $d_2 \ge 0$. 
In this case, it is then clear that the factor
${\Gamma(1 +\frac{\xi}{z})}/{\Gamma(1 + \frac{\xi}{z} + d_2)}$
contains $\xi$ except for the $\xi$-constant term $1/(d_2)!$. 
\emph{Thus this factor needs no treatment and will be ignored in the following
discussion.}
In other words, $I^{X/S}_{\beta}$ will be used as if this factor is $1$.
For the same reason (of the appearance of $\xi$ factor) that BF is needed only 
if $\lambda_\beta \le 0$.

Recall the rule for the \emph{directed product}: for any $n \in \mathbb{Z}$,
\begin{equation} \label{dir-prod}
\frac{\Gamma(1 + A)}{\Gamma(1 + A + n + x)} = \frac{1}{\prod_{j =
1}^{n} (A + j + x)} \frac{\Gamma(1 + A)}{\Gamma(1 + A + x)}.
\end{equation}

\begin{definition}
Given $(\beta_S, d_2)$, with $d_2 > -\nu^I$, the \emph{cohomology-valued fundamental rational function} $Q(\vec x)$ in $\vec x = (x_0, \ldots, x_r, y_0, \ldots, y_r)$ is defined by
\begin{equation*}
Q(\vec x) = Q_{\beta_S, d_2}(\vec x) := \prod_{i = 0}^r \frac{1}{\prod_{j =
1}^{\mu_i} (\frac{a_i}{z} + j + x_i) \prod_{j = 1}^{d_2 + \mu_i'} (\frac{b_i}{z} + j - y_i)}.
\end{equation*}
Its one variable (diagonal) version $Q(x)$ is given by setting all $x_i = x = y_i$. By abusing notations, we write $\vec x = x$ for this specialization.
\end{definition}

In terms of $Q$, with (\ref{dir-prod}) understood, the product in $I^{X/S}_\beta$ is then the specialization of 
\begin{equation} \label{QI}
Q(\vec x) \prod_{i = 0}^r \frac{\Gamma(1 + \frac{a_i}{z})}{\Gamma(1 +
\frac{a_i}{z} + x_i)} \frac{\Gamma(1 + \frac{b_i}{z})}{\Gamma(1 +
\frac{b_i}{z} - y_i)} =: {Q(\vec x) I_{\vec x\ell}}
\end{equation}
at $\vec x = d$. However, cancelations have to be understood on the RHS of (\ref{QI}) for certain $\vec x = d \in \Bbb Z$: When $x = d \in \Bbb N$, it is clear that $I_{d\ell}$
contains the factor
\begin{equation} \label{st-ord}
\frac{\Theta_{r + 1}}{z^{r + 1}} := \prod_{i = 0}^r \frac{b_i}{z}.
\end{equation}
However, for those $i$ with $d_2 - d + \mu_i' \ge 0$ (which exists when $\beta$ is $\T I$-effective), it is understood that the
factor $b_i/z$ cancels with the same term appeared in the
denominator of $Q(d)$. To make sense of the cancelation of $b_i$, we may
temporarily treat the classes $a_i$, $b_i$ as formal variables.

For those $i$ with $d + \mu_i < 0$, the
factor $a_i/z$ appears in the numerator. This is not the case for at least one $i$ (since $\beta$ is effective, or otherwise the factor $\prod_{i = 0}^r a_i = 0$ appears). Thus the leading terms
take the form
\begin{equation*}
c(d) \prod_{d + \mu_i < 0} \frac{a_i}{z} \prod_{d_2 - d + \mu_i' < 0} \frac{b_i}{z} + \cdots
\end{equation*}
in its $1/z$ expansion. The leading expression changes as $d$ varying among the integer values. This motivates the following 

\begin{definition}
Given $(\beta_S, d_2)$, a class $\beta = \beta_S + d \ell + d_2 \gamma \in NE(X)$, as well as $d$, is said to be in the \emph{unstable range} if $\beta$ is $\T I$-effective ($d \le d_2 + \mu'^I$). Otherwise it is in the \emph{stable range} ($d > d_2 + \mu'^I$).
\end{definition}

In view of (\ref{QI}) and (\ref{st-ord}), the leading $z$ order of $I^{X/S}_\beta$ which admits infinite series in $d$ is at $z^{-\lambda_\beta - (r + 1)}$. Any $z^k$ with $k > -\lambda_\beta - (r + 1)$ supports only finite number of $d$'s and all of them are within the unstable range. For this reason, we consider the shifted expression
\begin{equation} \label{W:r+1}
W[r + 1](\vec x, z, z^{-1}) := z^{r + 1} Q(\vec x)  I_{\vec x\ell}
\end{equation}
to locate the first infinite series in the $z^0$ (constant) level. 

By viewing $1/z = \Delta x_i = \Delta y_i$, $W[r + 1]$ is the \emph{multivariate extension} in multi-directions $a_i$'s and $-b_i$'s of the similar expression $W(\vec x)$ defined by setting $1/z = 0$ in $W[r + 1]$ as
\begin{equation}
W(\vec x) := z^{r + 1} \cdot \big(Q(\vec x) I_{\vec x \ell}\big)\big|_{1/z = 0}.
\end{equation}
Notice that $W(x)$ has poles at some $x = d$ if and only if non-trivial positive $z$ power survives in $W[r + 1](d)$. By our construction, $d$ must lie in the unstable range.

\begin{remark}
This extension is unique under the normalization that $I_{\vec x\ell} = 1$ at $\vec x = 0$. Indeed, $I_{x\ell}(z^{-1} = 0) = 1/\prod_{i = 0}^r \Gamma(1 + x) \Gamma(1 - x) = (\frac{\sin \pi x}{\pi x})^{r + 1}$. The naive extension gives only $1/\prod_{i = 0}^r \Gamma(1 + a_i/z + x) \Gamma(1 + b_i/z - x)$. The extra factor $\prod_{i = 0}^r \Gamma(1 + a_i/z) \Gamma(1 + b_i/z)$ is needed to recover $I_{x\ell}$.
\end{remark}

For $x = d \in \Bbb N$, applying the Taylor series for $\log (1 \pm t)$ to each $a_i$ or $b_i$ \emph{separately} and then take a product, we get
\begin{equation*}
\begin{split}
I_{d\ell} &= \prod_{i = 0}^r \frac{\prod_{j = -d + 1}^0 (\frac{b_i}{z} + j)}{\prod_{j = 1}^d (\frac{a_i}{z} +j)} \\
&=  \frac{(-1)^{(d - 1)(r + 1)} \Theta_{r + 1}}{ d^{r + 1} z^{r + 1}} \exp \sum_{k \ge 1} \frac{1}{k z^k} \Big((-1)^{k} \sum_i a_i^k H^{(k)}_d - \sum_i b_i^k H^{(k)}_{d - 1} \Big).
\end{split}
\end{equation*}
Here $H^{(k)}_d := \sum_{j = 1}^d j^{-k}$ is the $k$-th harmonic series.

Similarly in the stable range,
\begin{equation*}
\begin{split}
&Q(d) I_{d\ell} = \prod_{i = 0}^r \frac{\prod_{j = \mu_i' + d_2 - d + 1}^0 (\frac{b_i}{z} + j)}{\prod_{j = 1}^{\mu_i + d} (\frac{a_i}{z} +j)} \\
&= W_{\beta_S, d_2}(d) \frac{\Theta_{r + 1}}{z^{r + 1} } \exp \sum_{k \ge 1} \frac{1}{k z^k} \Big((-1)^{k} \sum_i a_i^k H^{(k)}_{d + \mu_i} - \sum_i b_i^k H^{(k)}_{d - d_2 - \mu_i' - 1} \Big),
\end{split}
\end{equation*}
where
\begin{equation*}
W_{\beta_S, d_2}(d) = (-1)^{\sum_{i = 0}^r (d - (d_2 + \mu_i') - 1)}  \prod_{i = 0}^r \frac{(d - (d_2 + \mu_i') - 1)!}{(d + \mu_i)!}
\end{equation*}
is the \emph{fundamental rational function} studied in \cite{Lin, Wang4}. Here for $r$ even a sign twisting $(-1)^{d}$ is understood.

For a general $d$ (say in the unstable range), the expansion in $1/z$ depends only on the \emph{length data} $d + \mu_i$ and $d_2 - d + \mu_i'$ of the curve class $\beta$. Let $I$ and $J$ be the index set with length $< 0$ and let $I^c$, $J^c$ be the complementary sets respectively. Then
\begin{equation*}
\begin{split}
Q(d) I_{d\ell} &= \frac{\prod_{i \in I} \prod_{j = \mu_i + d + 1}^0 (\frac{a_i}{z} + j)}{\prod_{i \in I^c} \prod_{j = 1}^{\mu_i + d} (\frac{a_i}{z} +j)} \frac{\prod_{i \in J} \prod_{j = \mu_i' + d_2 - d + 1}^0 (\frac{b_i}{z} + j)}{\prod_{i \in J^c} \prod_{j = 1}^{\mu_i' + d_2 - d} (\frac{b_i}{z} +j)}\\
&= (-1)^{\sum_{i \in I} \mu_i + \sum_{i \in J} (\mu_i' + d_2) + (d - 1)(|I| + |J|)} \frac{a_{I}b_{J}}{z^{|I| + |J|}} \times \\
&\qquad \frac{\prod_{i \in I} (-d - \mu_i -1)!}{\prod_{i \in I^c} (d + \mu_i)!} \frac{\prod_{i \in J} (d - d_2 - \mu_i' - 1)!}{\prod_{i \in J^c} (d_2 - d + \mu_i')!} \exp \sum_{k \ge 1} \frac{1}{k z^k} \times \\
&\qquad \qquad\Big((-1)^{k} \sum_{i \in I^c} a_i^k H^{(k)}_{d + \mu_i} + (-1)^k \sum_{i \in J^c} b_i^k H^{(k)}_{\mu_i' + d_2 - d} \\
&\qquad \qquad \qquad - \sum_{i \in I} a_i^k H^{(k)}_{-\mu_i - d - 1} - \sum_{i \in J} b_i^k H^{(k)}_{d - d_2 - \mu_i' - 1} \Big).
\end{split}
\end{equation*}

This awful looking expression is in fact very simple in nature. It is a product of $2(r + 1)$ series with each one belongs to two types, namely with negative or non-negative length data.

\subsection{Regularization of rational functions}

The key observation is that the whole situation can be considered as a product of $r + 1$ series by pairing $(L_i, L_i')$ together. As in the Calabi-Yau $P^1$ flops case (c.f.~the proof of Lemma 3.15 in \cite{Wang4}), any factor of the form (for $x$ a large integer)
\begin{equation*}
\frac{(x - \mu' - 1)!}{(x + \mu)!}
\end{equation*}
defines a rational function which has at most simple poles. (Here we take for example $\mu = \mu_i$ and $\mu' = \mu_i' + d_2$.)

Let $\mu \ge -\mu'$ (otherwise it is a polynomial and we take Taylor series), then the Laurent series at $x = d \in [-\mu, \mu']\cap \Bbb Z$ is given by
\begin{equation*}
\frac{1}{\prod_{j = -\mu}^{\mu'} (x - j)} = \frac{1}{x - d} \prod_{j \ne d; \, j = - \mu}^{\mu'} \frac{-1}{j - d}\Big(\frac{1}{1 - (x - d)/(j - d)} \Big).
\end{equation*}
Taking products over $(L_i, L_i')$ shows that the most singular term is actually the product of the simple pole from each $i$. It remains to take into account of the harmonic series and figure out the correspondences between them at poles. Substitute $x - d = \Delta x$ by $1/z$, the above expression splits at $j = d$ and becomes (again using Taylor series of $\log (1 \pm t)$)
\begin{equation*}
\frac{1}{z} \frac{(-1)^{\mu' - d}}{(\mu + d)!(-d + \mu')!} \exp \sum_{k \ge 1} \frac{1}{k z^k} \Big( (-1)^k H^{(k)}_{d + \mu} + H^{(k)}_{\mu' - d}\Big).
\end{equation*}
Notice the formal correspondence with $a_i = 1$, $b_i = -1$ up to a sign.

\medskip

The expansion of $W[r + 1]$ in $1/z$ is the Laurent expansion of $W(\vec x)$ at $\vec x = x$. The unstable range contains all possible poles of $W(x)$. The constant term at $x = d$ is the regular part ${\rm Reg}\,W(d)$.
In the stable range,
\begin{equation*}
\begin{split}
W(d) &= {\rm Reg}\,W(d) \\
&=  (-1)^{(d - 1)(r + 1)} \frac{\Theta_{r + 1}}{d^{r + 1}} \prod_{i = 0}^r \frac{1}{\prod_{j =
1}^{\mu_i} (j + d) \prod_{j = 1}^{\mu_i' + d_2} (j - d)},
\end{split}
\end{equation*}
which by definition coincides with $\Theta_{r + 1}W_{\beta_S, d_2}(d)$.

By the same process, the Taylor expansion at $x = d$ gives back to $Q(d)I_{d\ell}$ with $a_i =1$, $b_i = -1$. Notice that this does not recover $Q(d)I_{d\ell}$ completely since the process does depend on the presentation of the rational expression. Nevertheless, the above discussions lead to

\begin{lemma}
In the full range of $d$, the series expansion 
$$
z^{r + 1} Q(x)  I_{x\ell} = \sum_{k \le r + 1} W_k z^{k}
$$ 
and the Laurent expansion of $W_{\beta_S, d_2}(x)$ in $1/z$, denoted by $\sum_{k \le r + 1} w_k z^{k}$, at $x = d$ are compatible in the sense that
\begin{equation} \label{w=W}
w_k(d) = W_k(d)|_{a_i = 1, b_i = -1}.
\end{equation}
\end{lemma}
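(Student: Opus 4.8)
The plan is to prove \eqref{w=W} by factoring both sides into a product over the index $i\in\{0,\dots,r\}$, one factor for each pair $(L_i,L_i')$, and matching the factors one pair at a time using the $\log(1\pm t)$ and harmonic-series expansions already recorded above. First I would note that, up to the scalar $\Theta_{r+1}=\prod_i b_i$ and an overall sign, $z^{r+1}Q(x)\,I_{x\ell}$ is a product of $r+1$ one-variable expressions, the $i$-th being
\[
z\cdot\frac{1}{\prod_{j=1}^{\mu_i}(a_i/z+j+x)\;\prod_{j=1}^{d_2+\mu_i'}(b_i/z+j-x)}\cdot\frac{\Gamma(1+a_i/z)}{\Gamma(1+a_i/z+x)}\cdot\frac{\Gamma(1+b_i/z)}{\Gamma(1+b_i/z-x)},
\]
while the scalar rational function $W_{\beta_S,d_2}(x)$ is, up to sign, the product over $i$ of the elementary factors $\Gamma(x-d_2-\mu_i')/\Gamma(x+\mu_i+1)$ (with the $(-1)^x$ twist when $r$ is even). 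Because $\Theta_{r+1}|_{b_i=-1}=(-1)^{r+1}$, it is enough to prove the single-pair statement: for each $i$ and each integer $d$ in the full range, the Laurent expansion at $x=d$ of the $i$-th factor of $W_{\beta_S,d_2}(x)$, written in powers of $1/z$ via $x-d\leftrightarrow 1/z$, equals the $1/z$-expansion of the $i$-th factor displayed above after the substitution $a_i=1$, $b_i=-1$.

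For the single-pair statement I would split into the two regimes already singled out above. When $d$ lies in the \emph{stable range} ($d>d_2+\mu_i'$, so the $i$-th rational factor is regular at $x=d$), the claim follows by comparing the identity displayed above for $Q(d)I_{d\ell}$ in the stable range --- which, after $a_i=1$, $b_i=-1$, has $i$-th harmonic exponential equal to $\exp\sum_{k\ge1}\frac{1}{kz^k}((-1)^kH^{(k)}_{d+\mu_i}+H^{(k)}_{d_2+\mu_i'-d})$ --- with the single-pole/Taylor formula recorded above applied with $\mu=\mu_i$ and $\mu'=d_2+\mu_i'$, which is precisely the Taylor expansion of $\pm\,\Gamma(x-d_2-\mu_i')/\Gamma(x+\mu_i+1)$ at $x=d$. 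When $d$ lies in the \emph{unstable range} one or both of the length data $d+\mu_i$, $d_2-d+\mu_i'$ may be negative; in each case the general-$d$ expansion of $Q(d)I_{d\ell}$ recorded above --- the one organized by the index sets of negative-length directions --- produces, after $a_i=1$, $b_i=-1$, exactly the Laurent or Taylor expansion at $x=d$ of $\Gamma(x-d_2-\mu_i')/\Gamma(x+\mu_i+1)$, with any simple $1/z$-pole coming from the factored $1/z$ of the single-pole formula and the remaining data from the corresponding harmonic exponential. Taking the product over $i$ then reproduces the correct overall pole order and leading $z$-power, consistent with the shift built into $W[r+1]$.

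Finally, I would multiply the $r+1$ matched factor-identities and extract the coefficient of $z^k$ from both sides; since the coefficient of a fixed power of $z$ in a $1/z$-expansion does not depend on how the product is grouped or on the particular directed-product presentation used, this gives $w_k(d)=W_k(d)|_{a_i=1,\,b_i=-1}$, the accumulated signs --- the $(-1)^{(d-1)(r+1)}$, the per-factor $(-1)^{d_2+\mu_i'-d}$, and the parity twist for $r$ even --- being absorbed into $\Theta_{r+1}|_{b_i=-1}=(-1)^{r+1}$. I expect the main, and essentially the only, obstacle to be sign bookkeeping: keeping the several sign conventions mutually consistent and confirming that the harmonic-series indices line up correctly after $a_i\mapsto1$, $b_i\mapsto-1$ in the stable and unstable regimes at once. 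There is no conceptual content beyond the expansions already carried out above.
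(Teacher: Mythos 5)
Your proposal is correct and follows essentially the same route as the paper: pair up $(L_i,L_i')$ to reduce to a one-variable statement for each $i$, compare the per-pair Laurent/Taylor expansion of $\Gamma(x-d_2-\mu_i')/\Gamma(x+\mu_i+1)$ at $x=d$ (via $x-d\leftrightarrow 1/z$) with the corresponding harmonic-exponential factor of $Q(d)I_{d\ell}$ under $a_i=1$, $b_i=-1$, and multiply, absorbing $\Theta_{r+1}|_{b_i=-1}=(-1)^{r+1}$ into the sign bookkeeping. The paper's treatment is a brief sketch attached to the preceding displays; your write-up spells out the same factor-by-factor matching in more detail but with no new idea, so it matches the paper's proof.
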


Here is a basic fact concerning polynomial parts of a rational function:

\begin{lemma} \label{polynomial}
Let $F(x)$ be a rational function with poles at $e_j$'s and
with polynomial part $P(x)$. Then
\begin{equation*}
P(e) = {\rm Reg}\,F(e) - \sum_{e_j \ne e} {\rm Pri}_{e_j} F(e).
\end{equation*}
\end{lemma}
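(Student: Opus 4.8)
The statement is the elementary partial fractions identity, read off at the pole $e$, so the plan is short.

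\textbf{Step 1: Partial fraction decomposition.} First I would invoke the standard decomposition over $\mathbb{C}$: every rational function $F$ admits a unique expression
\[
F(x) = P(x) + \sum_j {\rm Pri}_{e_j}\, F(x),
\]
where $P$ is a polynomial (the quotient in the Euclidean division of the numerator of $F$ by its denominator, hence the \emph{polynomial part}) and, for each pole $e_j$, ${\rm Pri}_{e_j}\, F$ is the \emph{principal part} of $F$ at $e_j$, i.e.\ the unique rational function of the form $\sum_{k \ge 1} c_{j,k}(x - e_j)^{-k}$ (a finite sum of length equal to the order of the pole) for which $F - {\rm Pri}_{e_j}\, F$ is holomorphic at $e_j$. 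By construction each ${\rm Pri}_{e_j}\, F$ vanishes at infinity and has its only pole at $e_j$, so the summands have pairwise disjoint pole sets and the decomposition is forced; this is all classical.

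\textbf{Step 2: Isolate the pole $e$ and evaluate.} By definition, the regular part of $F$ at $e$ is ${\rm Reg}_e F := F - {\rm Pri}_e F$, which is holomorphic near $e$ and whose value there --- the constant term of the Laurent expansion of $F$ at $e$ --- is what is written ${\rm Reg}\,F(e)$. Subtracting ${\rm Pri}_e F$ from both sides of the decomposition in Step 1 gives an identity
\[
{\rm Reg}_e F(x) = P(x) + \sum_{e_j \ne e} {\rm Pri}_{e_j}\, F(x)
\]
of functions each holomorphic in a neighbourhood of $e$. Setting $x = e$ yields ${\rm Reg}\,F(e) = P(e) + \sum_{e_j \ne e} {\rm Pri}_{e_j}\, F(e)$, and rearranging gives the asserted formula. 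If $e$ happens not to be a pole of $F$, the same computation applies with ${\rm Pri}_e F = 0$.

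There is no genuine obstacle here; the one point that warrants a word of care is compatibility of the notation ${\rm Reg}$ and ${\rm Pri}$ with their earlier use --- in particular that ``${\rm Reg}\,W(d)=$ constant term at $x=d$'' of a Laurent expansion is exactly ${\rm Reg}_d W$ evaluated at $d$, since the Laurent expansion of $W$ at $d$ is ${\rm Pri}_d W$ plus a Taylor series whose value at $d$ is that constant term. This bookkeeping lemma is precisely what lets one read off the polynomial (equivalently, stable-range / $J$-function) contribution at a pole $d$ from the fundamental rational function $W_{\beta_S, d_2}$ by regularizing and removing the principal parts contributed by the other (unstable-range) integer values, which is the promised first step of analytic continuation.
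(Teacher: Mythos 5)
Your proof is correct and follows essentially the same route as the paper: both start from the partial fraction decomposition $F = P + \sum_j {\rm Pri}_{e_j} F$, isolate the principal part at $e$, and evaluate the remaining (holomorphic) sum at $x=e$, handling separately the case where $e$ is not actually a pole.
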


\begin{proof}
Let $n_j = {\rm ord}_{x = e_j} F(x)$. By division and taking partial fractions, we have
\begin{equation*}
F(x) = P(x) + \frac{R(x)}{\prod_{j} (x - e_j)^{n_j}} = P(x) +
\sum_{j} {\rm Pri}_{e_j} F(x).
\end{equation*}
If $e \not\in \{e_j\}$, then ${\rm Reg}\,F(e) = F(e)$ and the
lemma holds. If $e = e_i$ for some $i$, then
\begin{equation*}
F(x) = {\rm Pri}_{e} F(x) + \Big( P(x) + \sum_{j \ne
i} {\rm Pri}_{e_j} F(x) \Big)
\end{equation*}
and the lemma again holds.
\end{proof}

Combining both lemmas leads to results on the first stable series $W_0(d)$. For ease of notations, denote by 
$$
A = A(q, z) = z^{-\lambda_\beta - (r + 1)} q^{\beta_S} q^{d_2 \gamma}
$$ 
the basic factor centered at the first stable series ($\lambda_\beta \equiv c_1(X/S).\beta$).

\begin{proposition} \label{1step}
Given $(\beta_S, d_2)$ with $c_1(X/S).\beta \le -(r + 1)$, so that the first stable series is located at non-negative $z$ degree, the ``partial Birkhoff factorization'' up to the first stable series 
$$
P_1(z) I :=  I - A \sum_{r + 1 \ge k \ge 1;\, e} z^{k} q^e \widehat{W}_k(e) I 
$$
leads to polynomial values $P_{\beta_S, d_2}(d) q^d$ at order $z^{-c_1(X/S).\beta - (r + 1)}$ in the stable range. This also holds for general $d$ if we consider $\T P_1(z) I^X - P_1'(z) I^{X'}$. In particular, this leads to analytic continuations of $P_1(z) I$ up to $z^{-c_1(X/S).\beta - (r + 1)}$.

The compatibility of partial BF operators $\T P_1(z) = P_1'(z)$ always holds even for $c_1(X/S).\beta > -(r + 1)$. In that case $\T P_1(z) I^X - P_1'(z) I^{X'} = 0$ for all non-negative $z$ degree terms lying over $(\beta_S, d_2)$.
\end{proposition}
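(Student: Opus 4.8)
The plan is to run the Birkhoff factorization algorithm of Theorem~\ref{BF/GMT} in a truncated fashion — keeping track only of the $NE(X)$-layer lying over a fixed pair $(\beta_S,d_2)$ and only of the $z$-powers down to the first stable series — and then to recognize the surviving coefficient as the \emph{polynomial part} of the fundamental rational function $W_{\beta_S,d_2}$ by means of Lemma~\ref{polynomial}. First I would fix $(\beta_S,d_2)$, set $\lambda=c_1(X/S).\beta$, and isolate the subseries $\sum_{d} q^{\beta_S+d\ell+d_2\gamma}e^{dt^1+d_2t^2}I^{X/S}_{\beta}J^S_{\beta_S}$ of $I$. Ignoring the $\xi$-factor $\Gamma(1+\frac{\xi}{z})/\Gamma(1+\frac{\xi}{z}+d_2)$ as explained in the Appendix, one has $I^{X/S}_\beta=z^{-\lambda}Q(d)I_{d\ell}$ with $z^{r+1}Q(x)I_{x\ell}=\sum_{k\le r+1}W_k(x)z^k$; the positive $z$-degree contributions of this layer sit at the levels $z^{k-(r+1)-\lambda}$, $1\le k\le r+1$, and by the leading-order analysis of the Appendix the coefficient at level $k\ge 1$ is supported precisely on the \emph{unstable} (i.e.\ $\T I$-effective) values of $d$, where $W_k(d)\ne 0$.

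Next I would verify that the operator $P_1(z)=1-A\sum_{1\le k\le r+1,\,e}z^kq^e\widehat W_k(e)$ does the job. Each $W_k(e)\in H(X)$ is a polynomial in the (formal) classes $a_i,b_i,\xi$, so its naive quantization $\widehat W_k(e)$ is defined by Definition~\ref{d:7.7} and, by \eqref{modq}, $Az^kq^e\widehat W_k(e)\,e^{\hat t/z}=Az^kq^eW_k(e)\,e^{\hat t/z}$, whose top $z$-term is exactly the target $Az^kq^eW_k(e)$ (the remaining terms, coming from $e^{\hat t/z}=1+O(1/z)$, lie at strictly lower $z$-degree). As in the proof of Theorem~\ref{BF/GMT}, every newly created term either has strictly larger curve class in $NE(X)$ or strictly smaller $z$-degree, so the recursion terminates in finitely many steps and never disturbs the level $z^{-\lambda-(r+1)}$ over $(\beta_S,d_2)$. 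Moreover the construction is manifestly $\T$-equivariant: $W_k$ is a $z$-expansion coefficient of a product of \emph{linear} factors in $a_i,b_i,\xi$, and $\T$ merely relabels $a_i\leftrightarrow b'_i$, $b_i\leftrightarrow a'_i$, $\xi\leftrightarrow\xi'$, $\mu_i\leftrightarrow\mu'_i$, $d\leftrightarrow d_2-d$ (compare \eqref{IX} and the discussion preceding Lemma~\ref{naive-ql}); together with the compatibility of canonical bases (Notations~\ref{n:1}) and $\T(Aq^e)=A'q^{\T e}$ (here $\lambda$ is symmetric in $F,F'$ and $\T\gamma=\ell'+\gamma'$), this gives $\T P_1(z)=P_1'(z)$ as an honest identity, valid for \emph{every} $(\beta_S,d_2)$, including $\lambda>-(r+1)$.

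I would then evaluate $[P_1(z)I]$ at the level $z^{-\lambda-(r+1)}$ on the coefficient of $q^{\beta_S+d_0\ell+d_2\gamma}$. For $d_0$ in the stable range this receives: (i) the ``main'' term $W_0(d_0)$ coming from $I$ itself; and (ii) the corrections $-Az^kq^{d\ell}\widehat W_k(d)$ acting, for unstable $d<d_0$, through the extremal contribution $q^{(d_0-d)\ell}I^{X/S}_{(d_0-d)\ell}$ of $I$ (Lemma~\ref{B=1}). Using the identity \eqref{w=W} relating the $z$-expansion of $z^{r+1}Q(x)I_{x\ell}$ to the Laurent expansion of $W_{\beta_S,d_2}(x)$ at its poles, together with the factorisation of $W_{\beta_S,d_2}$ into a product over $i$ of rational functions with at most simple poles (Appendix, \S on regularization), the correction sum (ii) is identified with $\sum_{e_j\ne d_0}{\rm Pri}_{e_j}W_{\beta_S,d_2}(d_0)$; since $d_0$ is stable one has ${\rm Reg}\,W_{\beta_S,d_2}(d_0)=W_{\beta_S,d_2}(d_0)=W_0(d_0)$, so Lemma~\ref{polynomial} yields $[P_1(z)I]=P_{\beta_S,d_2}(d_0)$, the polynomial part — in particular a polynomial in $d_0$. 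For general (unstable) $d_0$ the functions $I^X$, $I^{X'}$ are genuinely distinct, so I would instead compare $\T P_1(z)I^X-P_1'(z)I^{X'}$: in the $\T I$-effective range the relevant parts of $I^X$ and $I^{X'}$ match by naive quasi-linearity (Lemma~\ref{naive-ql}), and the residual discrepancy between $\T W_k$ and $W'_k$ is exactly the one absorbed by the reflection identity $\f(q)+\f(q^{-1})=(-1)^r$ following \eqref{rat-f}, so the difference at levels $\ge z^{-\lambda-(r+1)}$ equals the analytic continuation of the polynomial $P_{\beta_S,d_2}(d)$ — which, being polynomial, is its own continuation. This is the asserted analytic continuation of $P_1(z)I$ up to $z^{-\lambda-(r+1)}$. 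Finally, when $\lambda>-(r+1)$ all positive-$z$ terms over $(\beta_S,d_2)$ already lie in the $\T I$-effective range, where $\T I^X$ and $I^{X'}$ agree by Lemma~\ref{naive-ql}, forcing $\T P_1(z)I^X-P_1'(z)I^{X'}=0$ on the non-negative $z$-part over $(\beta_S,d_2)$.

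I expect the main obstacle to be step (ii) of the evaluation: proving that ``naive quantization composed with the extremal factor $I^{X/S}_{d'\ell}$'' reproduces \emph{exactly} the partial-fraction principal parts ${\rm Pri}_{e_j}W_{\beta_S,d_2}$, i.e.\ that the harmonic-series bookkeeping of the $1/z$-expansions carried out in the Appendix matches the combinatorics of $\widehat W_k$ term by term. Secondary technical points are the harmless role of the ignored $\xi$-factor and the verification that the intrinsic $z^{-1}$-expansion of $J^S_{\beta_S}(\bar t)$ does not contribute at the critical level $z^{-\lambda-(r+1)}$.
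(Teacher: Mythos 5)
Your overall strategy matches the paper's: isolate the $(\beta_S,d_2)$-layer, write $I^{X/S}_\beta=z^{-\lambda}Q(d)I_{d\ell}$ with expansion $z^{r+1}Q(x)I_{x\ell}=\sum_{k\le r+1}W_k(x)z^k$, define $P_1(z)$ by naively quantizing the $W_k(e)$ at unstable $e$, compute the action of $\widehat W_k(e)$ on the extremal series via the operators $\delta_D$, recognize the resulting sum as $\sum_{e}{\rm Pri}_e W_{\beta_S,d_2}(d)$ by \eqref{w=W}, and conclude polynomiality from Lemma~\ref{polynomial}. You have also correctly flagged the harmonic-series / $\delta$-operator bookkeeping as the real computational core, which is exactly what the paper's $\prod\delta_{a_i}\prod\delta_{b_j}$ calculation and the identity $\sum_{|I|+|J|=r+1-k}(-1)^{|J|}c^k_{IJ}(e)=w_k(e)$ supply.

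There are two points, however, where your argument deviates from the paper's and is not quite justified as written. First, in the unstable range you assert that ``the relevant parts of $I^X$ and $I^{X'}$ match by naive quasi-linearity (Lemma~\ref{naive-ql}).'' That lemma only yields $\T I_\beta.\xi = I'_{\T\beta}.\xi'$ (or full equality when $d_2<0$); with the $\xi$-factor stripped away, $\T I^X_\beta$ and $I^{X'}_{\T\beta}$ do \emph{not} agree in the $\T I$-effective range. What the paper actually uses is the topological-defect relation \eqref{top-defect}, $\T W_0(d)-W'_0(d)=(-1)^{r+1}{\rm Reg}\,W_{\beta_S,d_2}(d)\Theta'_{r+1}$, to control the residual $\Theta'_{r+1}$-component, combined with the observation that the principal parts at $e>d$ are invisible on the $X$ side and can only be supplied by $P'_1(z)I^{X'}$ — this is the real reason one must pass to $\T P_1(z)I^X - P'_1(z)I^{X'}$. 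Second, for the final statement ($\lambda>-(r+1)$) the paper does not argue via matching $I^X$ with $I^{X'}$ but via the Euler-series identity $\f(q)+\f(q^{-1})=(-1)^r\Leftrightarrow E(q)=\sum_{d\in\Bbb Z}q^d=0$, giving $\sum_{d\in\Bbb Z}P_{\beta_S,d_2}(d)q^d=P_{\beta_S,d_2}(q\,d/dq)E(q)=0$; your appeal to naive quasi-linearity again conflates ``$\T I$-effective'' with ``$\T I^X=I^{X'}$.'' These are not cosmetic; without the topological-defect identity and the Euler-series argument the two general-$d$ claims of the proposition are not established.
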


\begin{proof}
For $1 \le k \le r + 1$, a target term \emph{with an additional $z^k$ power} lies in $A W_k z^{k} q^{e\ell}$ and takes the form 
$$
A c_{IJ}^k(e) a_I b_J z^{k} q^{e\ell}
$$ 
with $|I| + |J| = r + 1 - k \le r$. In particular there is a corresponding $\T$-compatible term on the $X'$ side given by $(\T A) c_{IJ}^k(e) b'_I a'_J z^{k} q^{-e\ell'}$.

For a divisor $D$, the naive quantization has the effect $\hat D = z\p_D = D + z\delta_D$, where $\delta_D$ is the number operator which acts on $q^\beta$ by $\delta_D\, q^\beta = (D.\beta)\, q^\beta$. Then in the partial BF procedure (c.f.~Theorem \ref{BF/GMT})
\begin{equation*}
I - A \sum_{k, e, I, J} c_{IJ}^k(e) z^{k} q^e \prod_{i \in I}(a_i + z\delta_{a_i}) \prod_{j \in J} (b_j + z\delta_{b_j}) I,
\end{equation*}
the first term $a_I b_J$ in the product cancels the target term. 

Modulo higher $\beta_S$ and $d_2\gamma$, we only need to consider extremal contribution $\sum_{d\ge 1} I_{d\ell}\,q^d$ to the product ($q := q^\ell$). The highest $z$ degree comes from
\begin{equation*}
\begin{split}
&-A c_{IJ}^k(e) z^{k + (r + 1 - k)} q^e \prod \delta_{a_i} \prod \delta_{b_j} \sum_{d \ge 1} \frac{(-1)^{(d - 1)(r + 1)} \Theta_{r + 1}}{d^{r + 1} z^{r + 1}} q^d \\
&\quad = -(-1)^{|J|} A c_{IJ}^k(e) \Theta_{r + 1} \sum_{d \ge 1} \frac{(-1)^{(d - 1)(r + 1)}}{d^k} q^{d + e} \\
&\quad = -(-1)^{|J|} A c_{IJ}^k(e) \Theta_{r + 1} \sum_{d \ge e + 1} \frac{(-1)^{(d - e - 1)(r + 1)}}{(d - e)^k} q^{d}.
\end{split}
\end{equation*}

By construction, we have for each fixed $k$ and unstable $e$ that
\begin{equation}
\sum_{|I| + |J| = r + 1 - k} (-1)^{|J|} c^k_{IJ}(e) = W_k(e)|_{a_i = 1, b_i = -1} = w_k(e).
\end{equation}

If $d$ is in the stable range, then summing all the unstable terms with positive $z$ power gives rise to the principal part of $W_{\beta_S, d_2}(d)$. Thus the result follows by a careful check on the signs. (Namely $\Bbb Z_2$ graded if $r$ is even.)

If $d$ is in the unstable range, then there are two places in the proof of polynomiality which need to be modified.

Firstly, $W_0(d)$ is related to ${\rm Reg}\,W_{\beta_S, d_2}(d)$ if we set $a_1 = 1$, $b_i = -1$. Alternatively, as $d$ makes sense on both $X$ and $X'$ sides, we have also the relation on topological defect
\begin{equation} \label{top-defect}
\T W_0(d) - W'_0(d) = (-1)^{r + 1} {\rm Reg}\, W_{\beta_S, d_2}(d) \Theta'_{r + 1},
\end{equation}
where $\Theta'_{r + 1} = \prod_{i = 0}^r b_i' = \prod_{i = 0}^r (c_1(L_i) + \xi' - h')$. 
(This follows from Part I. Indeed it is clear that the difference is a scalar multiple of $\Theta'_{r + 1}$ since it is in the kernel of the multiplication map by $\xi'$.)

Secondly, the shifting of $k$-th order pole by $e$ only works for those $e < d$. Those poles at $e$ with $e > d$ are missing from the formula on the $X$ side. Thus to receive a complete correction of the principal part from all $e \ne d$ we need (and only need) to consider $\T P_1(z) I^{X/S} -P_1'(z) I^{X'/S}$. 

For the last statement, notice that $\f(q) + \f(q^{-1}) = (-1)^r$ is formally equivalent to the vanishing of the Euler series $E(q) := \sum_{d \in \Bbb Z} q^d = 0$. Hence
$$
\sum_{d \in \Bbb Z} P_{\beta_S, d_2}(d) q^d = P_{\beta_S, d_2}(qd/dq) E(q) = 0.
$$

The proof is complete.
\end{proof}

\subsection{A remark on higher regularization}

Next we move to the Birkhoff factorization up to the second stable series. This step is needed only if 
$$
-c_1(X/S).\beta -(r + 1) \ge 1.
$$ 
Harmonic series appears naturally and the expected regularization into polynomials becomes much more tricky. An simple useful fact is that the difference of two harmonic series is a rational function.

Let $\lambda_j = c_1(L_j)$ and $\lambda_j' = c_1(L_j')$. Denote by $e$ an index in the unstable range, then the partial BF with one more order reads as
\begin{equation} \label{BF-2}
\begin{split}
P_2(z) I := I &- A \sum_{r + 1 \ge k \ge 1;\, e} z^{k} q^e \widehat{W}_k(e) I \\
& - A \sum_{d: {\rm stable}} q^d P_{\beta_S, d_2}(d) \widehat\Theta_{r + 1} I \\
& - A \sum_{d: {\rm unstable}} q^d  \Big(\widehat W_0(d) - \sum_{e < d} {\rm Pri}_{e}(d) \widehat\Theta_{r + 1} \Big) I
\end{split}
\end{equation}
where $\Theta_{r + 1} = \prod_{j = 0}^r b_j = \prod_{j = 0}^r ((\lambda_j + \lambda'_j) + \xi - a_j)$ and
\begin{equation*}
\begin{split}
\widehat \Theta_{r + 1} := \prod_{j = 0}^r z\p_{b_j} - (-1)^{r + 1}\prod_{j = 0}^r z\p_{a_j}
\end{split}
\end{equation*}
(since $\prod a_j = 0$, the corresponding quantization product is removed). By the construction, the first stable series vanishes automatically. 

Now we investigate the second stable series, namely the 
$$
A z^{-1} = z^{-\lambda_\beta - (r + 1) - 1} q^{\beta_S} q^{d_2 \gamma}
$$ 
degree terms. They all contain the factor $(-1)^{(d - 1)(r + 1)}\Theta_{r + 1}$ hence we may remove $\xi$ from the remaining classes. 

The main terms come from the first two series in (\ref{BF-2}). The terms from $I$ are degree $A$ terms multiplied by the following harmonic series
\begin{equation*}
\begin{split}
&-\sum a_i H_{d + \mu_i} - \sum b_i H_{d - 1 - \mu'_i - d_2} \\
&\qquad = h \sum (-H_{d + \mu_i} + H_{d - 1 - \mu'_i - d_2}) - \sum (\lambda_i + \lambda'_i) H_{d - 1} \\
&\qquad \qquad + \sum \lambda_i (H_{d} - H_{d + \mu_i}) + \sum \lambda'_i (H_{d - 1} - H_{d - 1 - \mu'_i - d_2}) - \sum \lambda_i /d.
\end{split}
\end{equation*}

The terms from the second series form a sum over $k, e$, which has two parts: One with $(z\delta_h)^r$ on the second extremal series, which is 
$$
\sum (-1)^{|J|} c^k_{IJ}(e) A z^{-1} \Theta_{r + 1}
$$ 
multiplied by 
$$
-\sum a_i H_d - \sum b_i H_{d - 1} = -(r + 1)h/d -\sum (\lambda_i + \lambda'_i) H_{d - 1} - \sum \lambda_i/d,
$$ 
and another one with \emph{one less} differentiation $(z\delta_h)^{r - 1}$ on the top extremal term, which receives a factor
\begin{equation*}
(\sum_{i \in I} a_i - \sum_{i \in J} b_i)/d  = (r + 1 - k)h/d + \sum_{i \in I} \lambda_i/d - \sum_{i \in J} \lambda'_i/d.
\end{equation*}
For each $(k, e)$, we find correction factor 
$$
-\frac{kh}{d} \quad \Big(\mapsto -\frac{kh}{d - e} \quad \mbox{after shifted by $q^e$}\Big),
$$
hence it gives rise to derivative of $(d - e)^{-k}$. 

In the stable range, the first corresponding terms then lead to derivative, denoted by ${}^\bullet$ here, of the rational function. Since $f^\bullet - \sum g^\bullet = (f - \sum g)^\bullet$, they combine to the polynomial 
\begin{equation}
h P^\bullet_{\beta_S. d_2}(d),
\end{equation}
which is expected for the purpose of analytic continuations.

Similarly, by shifting $H_{d - e - 1}$ to $H_{d - 1}$ which is only up to a rational function in $d$, the second corresponding terms combine to
\begin{equation} \label{BAD}
-(c_1 + c'_1) P_{\beta_S, d_2}(d) H_{d - 1}.
\end{equation}
This is unfortunately the \emph{trouble term}, due to the appearance of $H_{d - 1}$. 

Finally, the last terms combine to 
$$
-c_1 P_{\beta_S, d_2}(d)/d.
$$

For unstable range, as in the proof of Proposition \ref{1step}, it is expected that similar calculation holds if we consider $\T P_2(z) I^{X/S} - P'_2(z) I^{X'/S}$. 

Combining the \emph{third series} in (\ref{BF-2}) and the one on the $X'$ side does produce correction terms, via \emph{harmonic convolution}, to cancel out the bad term (\ref{BAD}). The actual calculation is however getting more and more involved.

Simple examples such that the higher regularization is explicitly carried out can be found in \cite{LLW2}. But the elementary method used there (harmonic convolution etc.) does not seem to apply to the general case. This was one of the major motivations for us to develop the quantum Leray--Hirsch theorem during the early stage of this project after \cite{LLW}.

\end{document}